\documentclass[11pt]{article}
\usepackage{amsmath,amssymb,amsfonts,amsthm,amscd}
\usepackage{mathrsfs}
\usepackage{mathtools}

\usepackage{mathptmx}

\usepackage{color}
\usepackage[all]{xy}
\usepackage{graphicx}
\usepackage{fancyhdr}
\usepackage{tikz-cd}
\usepackage{bm}

\usepackage[hidelinks]{hyperref}

\begin{document}
\theoremstyle{definition}
\newtheorem{definition}{Definition}[section]

\newtheorem{notation}[definition]{Notation}
\newtheorem{example}[definition]{Example}
\newtheorem{exercise}[definition]{Exercise}

\theoremstyle{plain}
\newtheorem{theorem}[definition]{Theorem}
\newtheorem{proposition}[definition]{Proposition}
\newtheorem{lemma}[definition]{Lemma}
\newtheorem{corollary}[definition]{Corollary}

\theoremstyle{remark}
\newtheorem{remark}[definition]{Remark}

\def\no{\noindent}
\def\lra{\Leftrightarrow}
\def\loa{\longrightarrow}
\def\tsl{\textstyle}
\def\sul{\sum\limits}
\def\Ra{\Rightarrow}
\def\ra{\rightarrow}
\def\ol{\overline}
\def\ln{\langle}
\def\rn{\rangle}
\def\suse{\subseteq}
\def\spse{\supseteq}
\def\el{\varepsilon}
\def\sta{\stackrel}
\def\Im{{\rm Im}}
\def\im{{\rm im}}
\def\ker{{\rm ker}}
\def\Ker{{\rm Ker}}
\def\Spec{{\rm Spec}}
\def\fkm{{\frak m}}
\def\fkn{{\frak n}}
\def\fkp{{\frak p}}
\def\fkq{{\frak q}}
\def\Max{{\rm Max}}
\def\ann{{\rm ann}}
\def\boml{\bigotimes\limits}
\def\bom{\bigotimes}
\def\cok{{\rm cok}}
\def\Cok{{\rm Coker}}
\def\Ext{{\rm Ext}}
\def\Tor{{\rm Tor}}
\def\gd{{\rm ht}}
\def\pd{{\rm pd}}
\def\id{{\rm id}}
\def\fd{{\rm fd}}
\def\Ass{{\rm Ass}}
\def\rank{{\rm rank}}
\def\FPD{{\rm FPD}}
\def\depth{{\rm depth}}
\def\Supp{{\rm Supp}}
\def\End{{\rm End}}
\def\Hom{{\rm Hom}}
\def\en{{\varepsilon}}
\def\lst{\leqslant}
\def\gst{\geqslant}
\def\t{\tau}
\def\T{\mathfrak{T}}
\def\a{\alpha}
\def\b{\beta}
\def\r{\gamma}
\def\deq{\doteq}
\def\hom{{\rm hom}}
\def\mod{{\rm mod}}
\def\add{{\rm add}}
\def\radical{{\rm rad}}
\def\top{{\rm Top}}
\def\soc{{\rm Soc}}
\def\proj{{\rm proj}}
\def\ind{{\rm ind}}
\def\d{\cdot}
\def\lm{\lambda}
\def\star{\ast}
\def\finish{\hfill$\Box$}
\def\st{\theta}
\def\Id{{\rm Id}}
\def\GV{{\rm GV}}
\def\DW{{\rm DW}}
\def\SM{{\rm SM}}
\def\cap{\rm bigcap}
\newcommand{\Dsg}{\mathrm D_{\mathrm{sg}}}
\newcommand{\Db}{\mathrm D^{\mathrm b}}
\newcommand{\Kb}{\mathrm K^{\mathrm b}}
\newcommand{\G}{\mathcal G}
\newcommand{\rad}{\operatorname{rad}}

{\Large \bf
\begin{center}
Global obstruction property and its applications
\end{center}}

\medskip
\centerline{\textbf{Weiheng Xia}$^*$ }

 \renewcommand{\thefootnote}{\alph{footnote}}
 \setcounter{footnote}{-1} \footnote{ $^*$ Corresponding author.
 Email:xia.weiheng@qq.com}
 \renewcommand{\thefootnote}{\alph{footnote}}
 \setcounter{footnote}{-1} \footnote{2020 Mathematics Subject
 Classification: Primary: 16G10, 16E10; Secondary: 16E65, 16S50}
 \renewcommand{\thefootnote}{\alph{footnote}}
 \setcounter{footnote}{-1} \footnote{Keywords: global obstruction property, Auslander-Reiten conjecture, generalized Auslander-Reiten condition, finitistic dimension, triangular matrix algebra, singular equivalence.}

 \begin{abstract}
We introduce the global obstruction property (GOP) for Artin algebras: for every finitely generated module of infinite projective dimension, two distinct syzygies have a common nonprojective indecomposable direct summand. We prove that GOP implies the generalized Auslander--Reiten condition, and hence the Auslander--Reiten conjecture. We give criteria for GOP to hold for pullbacks over semisimple algebras and construct an example which is not syzygy-finite. For each Artin algebra $A$, we construct an algebra $\G(A)$ satisfying GOP by using triangular matrix extensions. We show that $\text{fin.dim}\G(A)=\text{fin.dim} A+2$ and that $A$ and $\G(A)$ are singularly equivalent. Finally, we give a pair of derived equivalent finite-dimensional algebras for which GOP holds on only one side.
 \end{abstract}


\section{Introduction}

The vanishing of extension groups provides a way to detect projectivity, but it is often difficult to determine how few test modules are sufficient. A central question in this direction is the Auslander--Reiten conjecture, formulated in the study of the generalized Nakayama conjecture by Auslander and Reiten~\cite{AR75}. For an Artin algebra $A$, the conjecture asserts that a finitely generated left $A$-module $M$ satisfying
\[
  \Ext_A^i(M,M\oplus A)=0 \quad\text{for all }i>0
  \qquad\Longrightarrow\qquad M\text{ is projective}.
\]
Thus one tests $M$ against itself and the regular module. If $A$ is self-injective, the groups $\Ext_A^i(M,A)$ vanish automatically for $i>0$, and the assertion becomes the self-extension problem usually called the second Tachikawa conjecture; see~\cite{Tachikawa73}. A recent preprint by OpenAI proposes a counterexample to ARC for finite-dimensional algebras~\cite{OAIARC26}. Our results do not depend on the validity of this proposed counterexample. We seek structural conditions under which the above vanishing conditions imply projectivity.

Several established approaches impose additional restrictions either on projective resolutions or on cohomology. Auslander and Reiten proved ARC for algebras whose finitely generated modules have ultimately closed projective resolutions~\cite{AR75}. Christensen and Holm proved it for rings satisfying Auslander's condition on vanishing of cohomology~\cite{CH10}. This condition requires, for each fixed module $M$, a bound on the degrees in which $\Ext_A^i(M,N)$ can be nonzero whenever these groups eventually vanish, with the bound independent of $N$. Other affirmative results depend on the structure of the algebra. For example, Xu proved ARC for symmetric special biserial algebras and for special biserial algebras with radical cube zero~\cite{Xu15}.

The corresponding question over commutative Noetherian rings has also motivated substantial work. Over local complete intersections, the support varieties developed by Avramov and Buchweitz relate eventual self-extension vanishing to finite projective dimension~\cite{AB00}; together with vanishing against the ring, this gives ARC. Construction methods are useful beyond the complete intersection case. Kumashiro studied quotients by powers of ideals generated by regular sequences and obtained classes of non-Gorenstein Cohen--Macaulay rings satisfying ARC~\cite{Kumashiro23}. In particular, if $S$ is a local complete intersection and $Q$ is generated by an $S$-regular sequence of length $c$, then $S/Q^\ell$ satisfies ARC for $1\leq\ell\leq c$~\cite[Corollary 2.9]{Kumashiro23}. Such results motivate the search for algebra constructions under which Ext-vanishing can be controlled explicitly.

A stronger condition allows the vanishing to begin in an arbitrary degree. We say that $A$ satisfies the \emph{generalized Auslander--Reiten condition} (GARC) if, for every $M\in A-\mod$ and every integer $r\geq0$,
\[
  \Ext_A^i(M,M\oplus A)=0 \quad\text{for all }i>r
  \qquad\Longrightarrow\qquad \pd_A M\leq r.
\]
This terminology refers to the validity of the generalized Auslander--Reiten conjecture for the algebra in question. GARC does not hold for every Artin algebra: quantum exterior algebras with suitable parameters admit nonprojective modules with eventually vanishing self-extensions. Such examples originate in work of Schulz~\cite{Schulz86}; explicit module and syzygy calculations can be found in~\cite[Section 4.1]{Erdmann17}. Since these algebras are self-injective, the modules also satisfy the required vanishing against the algebra. We give a sufficient condition for GARC in terms of repeated direct summands of syzygies.

Our starting point is repetition among direct summands of syzygies. The study of ultimately closed projective resolutions goes back to Jans~\cite{Jans61}. Goodearl and Huisgen-Zimmermann subsequently developed the notions of syzygy type and repetition index~\cite{GHZ98}. A module has finite syzygy type when only finitely many isomorphism classes of indecomposable modules occur as direct summands of its syzygies. For finitely generated modules over an Artin algebra, this is equivalent to having an ultimately closed projective resolution~\cite[Comments 2.2(a)]{GHZ98}. The algebra-level condition of syzygy-finiteness is stronger: after a fixed number of syzygies, one finite collection of indecomposable modules suffices for all finitely generated modules. These conditions provide natural sources of recurrence in resolutions.

Repetition also has a role in the finitistic dimension problem. The little finitistic dimension of $A$ is
\[
  \text{fin.dim} A
  =\sup\{\pd_A M\mid M\in A-\mod,\ \pd_A M<\infty\}.
\]
The finitistic dimension conjecture asks whether this number is finite for every Artin algebra; its history includes Bass's work on finitistic dimensions~\cite{Bass60}. A recent preprint by OpenAI proposes a finite-dimensional algebra of infinite little finitistic dimension~\cite{OAIFD26}. Our results concerning finitistic dimensions are independent of this proposed construction. Goodearl and Huisgen-Zimmermann showed that repetition indices of suitable individual modules give bounds on finitistic dimensions~\cite{GHZ98}. Our condition requires only that, for each module of infinite projective dimension, some nonprojective indecomposable module occur as a direct summand of two distinct syzygies. As we show below, this condition does not imply finiteness of the little finitistic dimension.

We call this condition the \emph{global obstruction property}, abbreviated GOP. Precisely, $A$ satisfies GOP if, for every $M\in A-\mod$ with $\pd_A M=\infty$, there exist integers $n\geq0$ and $p>0$ and a nonzero indecomposable nonprojective module $O$ such that
\[
  O\mid\Omega_A^n(M)
  \qquad\text{and}\qquad
  O\mid\Omega_A^{n+p}(M),
\]
where $O\mid X$ means that $O$ is isomorphic to a direct summand of $X$, and $\Omega_A$ is defined using projective covers. The integers and the module $O$ may depend on $M$. In particular, GOP requires neither a uniform recurrence bound nor finite syzygy type for every module. Representation-finite algebras and syzygy-finite algebras satisfy GOP, as do algebras of finite global dimension.

Our first result is that GOP implies GARC, and hence ARC (Theorem 3.2 and Proposition 3.4).

\begin{theorem}
If an Artin algebra $A$ satisfies GOP, then it satisfies GARC, hence it satisfies ARC.
\end{theorem}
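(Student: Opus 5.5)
We argue by contradiction and let the repeated summand supplied by GOP produce a nonvanishing Ext group in arbitrarily high degree. Suppose $M\in A-\mod$ and $r\geq 0$ satisfy $\Ext_A^i(M,M\oplus A)=0$ for all $i>r$, but $\pd_A M>r$. First we reduce to $r=0$. Put $N=\Omega_A^r(M)$. Dimension shifting gives $\Ext_A^i(N,A)\cong\Ext_A^{i+r}(M,A)=0$ for $i>0$. We also have $\Ext_A^{i}(N,N)$ related to $\Ext_A^{i+r}(M,N)$, and the vanishing of $\Ext^{j}_A(M,M)$ for $j>r$ has to be transported to $\Ext_A^{i}(N,N)$. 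This transport is done as follows.

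Take the exact sequences $0\to\Omega^{k+1}M\to P_k\to\Omega^kM\to 0$. Since $\Ext_A^{j}(M,P)=0$ for $j>r$ and every projective $P\in\add A$, the long exact sequences give $\Ext_A^{j}(M,\Omega^{k+1}M)\cong\Ext_A^{j+1}(M,\Omega^{k}M)$ for $j>r$ (using $j>r$ on the $P_k$ terms). Iterating, $\Ext_A^{j}(M,\Omega^kM)\cong\Ext_A^{j+k}(M,M)=0$ for $j>r$. Hence, for every $k\geq 0$ and $i>0$,
\[
\Ext_A^i(\Omega^rM,\Omega^kM)\cong\Ext_A^{i+r}(M,\Omega^kM)=0 .
\]
Since $\pd_AM>r$, we may also assume $\Omega^rM$ is nonprojective. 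If $\pd_AM<\infty$, say $\pd_AM=d>r$, then $\Ext_A^d(M,\Omega^dM)\neq0$ because $\Omega^dM$ is projective and the last map of the resolution does not split. Since $\Omega^d M\in\add A$, this contradicts $\Ext^d_A(M,A)=0$. So $\pd_AM=\infty$.

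Now apply GOP to $M$. We obtain $n\geq0$, $p>0$ and an indecomposable nonprojective $O$ with $O\mid\Omega^nM$ and $O\mid\Omega^{n+p}M$. Since $O$ is nonprojective, $\Ext_A^1(O,\Omega O)\ne0$; more generally, for nonprojective $X$, the class of $0\to\Omega^{m}X\to P\to\Omega^{m-1}X\to0$ shows $\Ext^m_A(X,\Omega^m X)\neq 0$. Taking $X=O$ and $m=p$, and noting that $\Omega^pO$ is a summand of $\Omega^{n+p}M$ (because $O\mid \Omega^n M$), we get
\[
\Ext_A^{p}(O,\Omega^{n+p}M)\neq0 .
\]
This requires $\Omega^pO$ to be nonzero, which holds as long as $O$ has infinite projective dimension. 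That follows from $O\mid\Omega^{n+p}M$ together with the vanishing of Ext against $A$; see below. Because $O$ is a summand of $\Omega^nM$, it follows that $\Ext^{p}_A(\Omega^nM,\Omega^{n+p}M)\neq0$, that is, $\Ext^{n+p}_A(M,\Omega^{n+p}M)\ne 0$ by dimension shift (valid since $n+p\ge1$). Enlarging $n$ beyond $r$ is harmless: the summand $O$ repeats at $n$ and $n+p$, so $\Omega^{p}O$ repeats at $n+p$ and $n+2p$. Iterating gives repetitions at $n+kp$ for all $k$, so we may assume $n>r$. The nonvanishing then contradicts $\Ext_A^{j}(M,\Omega^kM)=0$ for $j>r$ derived above.

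The main obstacle is justifying that $\Omega^pO$ is nonzero and nonprojective, and that it is really a summand of $\Omega^{n+p}M$ compatibly with the repetition. The step $\Omega(X\oplus Y)\cong\Omega X\oplus\Omega Y$ is fine with projective covers. The point to check is that $\Ext^m_A(X,\Omega^mX)\ne0$ needs $\Omega^{m-1}X$ nonprojective. I would handle this by using $\Ext^i_A(\Omega^nM,A)=0$ for $i>0$ when $n\ge r$: if some $\Omega^{j}O$ were projective with $j$ minimal, the sequence $0\to\Omega^jO\to P\to\Omega^{j-1}O\to0$ would split by this vanishing, since $\Omega^{j-1}O$ is a summand of $\Omega^{n+j-1}M$. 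That would make $\Omega^{j-1}O$ projective, contradicting minimality, since $O$ itself is nonprojective. Finally, ARC follows as the case $r=0$.
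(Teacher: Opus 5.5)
There is a genuine gap, and it is in your second-variable dimension shift. Apply $\Hom_A(M,-)$ to $0\to\Omega^{k+1}M\to P_k\to\Omega^kM\to0$. The connecting map runs $\Ext^j_A(M,\Omega^kM)\to\Ext^{j+1}_A(M,\Omega^{k+1}M)$, and it is an isomorphism for $j>r$. So raising the syzygy index raises the degree, not the other way round.

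What you actually get is $\Ext^j_A(M,\Omega^kM)\cong\Ext^{j-k}_A(M,M)$ only when $j-k>r$. Equivalently, $\Ext^i_A(\Omega^kM,\Omega^kM)=0$ for all $i>r$ and $k\ge0$. Your claim that $\Ext^i_A(\Omega^rM,\Omega^kM)=0$ for all $k\ge0$ and $i>0$ is false. For $k=r+1$ and $i=1$, the group contains the class of $0\to\Omega^{r+1}M\to P_r\to\Omega^rM\to0$, which is nonzero because $\Omega^rM$ is nonprojective. Had this vanishing been derivable, it would already contradict $\pd_AM>r$ without using GOP, and so would prove GARC for every Artin algebra, which is false.

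For the same reason, the nonvanishing $\Ext^{n+p}_A(M,\Omega^{n+p}M)\neq0$ that you produce is automatic. It uses only $O\mid\Omega^nM$, never the second occurrence $O\mid\Omega^{n+p}M$, and it contradicts nothing.

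The missing idea is to turn the repetition into a nonzero \emph{self}-extension. Suppose $O\mid Y$ and $O\mid\Omega^p_A(Y)$, and let $f$ be the composite $\Omega^p_A(Y)\twoheadrightarrow O\hookrightarrow Y$ of a split epimorphism and a split monomorphism. If $f$ factored through the inclusion $\Omega^p_A(Y)\hookrightarrow P_{p-1}$ into the minimal projective resolution of $Y$, then $1_O$ would factor through $P_{p-1}$ and $O$ would be projective. Hence $\Ext^p_A(Y,Y)\neq0$.

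With $Y$ a syzygy of $M$, this contradicts the correct vanishing only when the \emph{gap} satisfies $p>r$, and GOP gives no control over $p$. Your iteration along $n+kp$ moves the starting index but keeps the gap equal to $p$, so it does not help.

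The paper obtains a gap larger than $r$ by applying GOP repeatedly to $N=\Omega^r_AM$, so that $O\mid\Omega^n_AN$ and $O\mid\Omega^{n+p}_AN$. The class ${}^\perp A$ contains $N$ and is closed under syzygies and direct summands. So $O$ is a nonprojective module in ${}^\perp A$ and therefore has infinite projective dimension; this is essentially your argument for the $\Omega^j_AO$. GOP applied to $O$ gives $O'$ with $O'\mid\Omega^{a}_AO$ and $O'\mid\Omega^{a+p'}_AO$. Hence $O'$ occurs in $\Omega^{n+a}_AN$ and in $\Omega^{n+a+p+p'}_AN$, with gap $p+p'$. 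Iterating makes the gap exceed $r$, and the self-extension argument above then gives the contradiction.

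Your treatment of the case $r<\pd_AM<\infty$ is correct and agrees with the paper.
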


We next study constructions that produce algebras satisfying GOP. Pullbacks are especially suitable because their radicals may split into two ideals with zero mixed products. In commutative algebra, Dress and Kr\"amer showed that second syzygies over a fibre product of local rings split into modules over its two factors~\cite{DK75}. Nasseh and Sather-Wagstaff used the structure of such resolutions to prove vanishing results for Tor and Ext, including ARC for every nontrivial fibre product of commutative Noetherian local rings over a common residue field~\cite{NSW17}. Their result does not require ARC for the two factors. These results provide the relevant precedent for our analysis of pullbacks of Artin algebras over a semisimple quotient.

Let $\alpha\colon A\twoheadrightarrow D$ and $\beta\colon B\twoheadrightarrow D$ be surjective homomorphisms, with $D$ semisimple. We consider
\[
  A\times_D B
  =\{(a,b)\in A\oplus B\mid(a)\alpha=(b)\beta\}.
\]
When $\ker\alpha=\rad A$ and $\ker\beta=\rad B$, we call this a \emph{top pullback}. We prove a decomposition of second syzygies and explicit recursion formulas for higher syzygies. These formulas imply that $A\times_D B$ satisfies GOP whenever both $A$ and $B$ do (Proposition 3.13). We also give a criterion that assumes GOP on only one factor, together with restrictions on radicals of projective covers and on a subcategory of modules over the other factor (Proposition 3.11).

\begin{proposition}
Let $\alpha:A\twoheadrightarrow D$ and $\beta:B\twoheadrightarrow D$ be
surjective algebra homomorphisms. Assume that $A$ satisfies GOP,
$\ker\beta=\rad B$, and $D$ is semisimple. Suppose that there is a subset $\mathcal{S}\subset D-\text{simp}$ such that
\begin{enumerate}
\item $\forall S\in D-\text{simp}\setminus\mathcal{S}$, $$\rad_A(P_A(S))\notin A-\proj;$$
\item the subcategory $$\mathfrak{T}_{\mathcal{S}}(B)\coloneqq\{Y\in B-\mod|\top_B(Y)\in\add\mathcal{S}\}$$ is of finite representation type.
\end{enumerate}
Then $\Lambda=A\times_D B$ satisfies GOP.
\end{proposition}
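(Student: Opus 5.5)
The plan is to use the decomposition of second syzygies over a pullback and the recursion formulas for higher syzygies that the paper establishes for $\Lambda=A\times_D B$ (the results preceding Proposition 3.11), and then run a pigeonhole argument.

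\textbf{Step 1 (setup).} Fix $M\in\Lambda-\mod$ with $\pd_\Lambda M=\infty$. For $n\geq 2$ write
\[
\Omega_\Lambda^n(M)\cong X_n\oplus Y_n,
\]
with $X_n$ an $A$-module and $Y_n$ a $B$-module, both regarded as $\Lambda$-modules via the projections $\Lambda\twoheadrightarrow A$ and $\Lambda\twoheadrightarrow B$. For a simple $D$-module $S$, the indecomposable projective is $P_\Lambda(S)=P_A(S)\times_S P_B(S)$, and
\[
\rad_\Lambda P_\Lambda(S)\cong \rad_A P_A(S)\oplus\rad_B P_B(S).
\]
From this I expect the recursion to take the following form. $X_{n+1}$ is $\Omega_A(X_n)$ plus one copy of $\rad_A P_A(S)$ for each $D$-simple $S$ in $\top Y_n$. $Y_{n+1}$ is $\Omega_B(Y_n)$ plus one copy of $\rad_B P_B(S)$ for each $D$-simple $S$ in $\top X_n$. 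Since $\ker\beta=\rad B$, every simple in $\top Y_n$ is a $D$-simple.

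I also record two facts. First, an indecomposable $A$-module or $B$-module stays indecomposable over $\Lambda$, because the projections from $\Lambda$ are surjective. Second, an indecomposable $\Lambda$-projective that is an $A$-module must be some $P_\Lambda(S)$ with $\rad_B P_B(S)=0$. Then $P_\Lambda(S)=P_A(S)$, so it is $A$-projective. Consequently an $A$-nonprojective indecomposable is $\Lambda$-nonprojective.

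\textbf{Step 2 (case analysis).} There are three cases.

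\emph{Case (a):} for infinitely many $n$, $\top Y_n$ contains some $S\notin\mathcal S$. By hypothesis (1), $\rad_A P_A(S)\notin A-\proj$, so it has an indecomposable $A$-nonprojective summand. By Step 1 this summand is $\Lambda$-nonprojective, and it occurs in $X_{n+1}\mid\Omega_\Lambda^{n+1}(M)$. There are only finitely many simples $S$ and finitely many summands of each $\rad_A P_A(S)$. By pigeonhole, one indecomposable nonprojective $O$ divides two distinct syzygies.

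\emph{Case (b):} otherwise $\top Y_n\in\add\mathcal S$ for all $n\geq N$, so $Y_n\in\mathfrak T_{\mathcal S}(B)$. Suppose infinitely many $Y_n$ have a $\Lambda$-nonprojective indecomposable summand. Hypothesis (2) says $\mathfrak T_{\mathcal S}(B)$ is of finite representation type, so again pigeonhole produces a common summand $O$.

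\emph{Case (c):} otherwise, for $n\geq N'$, every $Y_n$ is $\Lambda$-projective. Then $\Omega_\Lambda^{n+1}(M)$ is $\Omega_\Lambda(X_n)$ up to projective summands. Hence $\Omega^{k}_\Lambda(X_{N'})$ agrees with $\Omega^{N'+k}_\Lambda(M)$ up to projective summands, so $\pd_\Lambda X_{N'}=\infty$.

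\textbf{Step 3 (finishing Case (c); the main obstacle).} I need to show $\pd_A X_{N'}=\infty$ and that the $A$-syzygies of $X_{N'}$ are visible inside the $\Lambda$-syzygies of $M$. By the recursion, the $A$-part of $\Omega_\Lambda^{N'+k}(M)$ is $\Omega_A^k(X_{N'})$ plus copies of $\rad_A P_A(S)$ for $D$-simples $S$ in the tops of $Y_n$. The $B$-parts contributed along the way are exactly the $Y_n$, which are projective. I expect the resulting summands $\rad_A P_A(S)$ to be $A$-projective as well, since $Y_n$ projective should force this. This is the delicate point to verify, and I would check it directly from the shape of $P_\Lambda(S)$ as a fibre product.

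Granting this, if $\pd_A X_{N'}<\infty$ then $M$ would have finite $\Lambda$-projective dimension, a contradiction. So GOP for $A$ yields an $A$-nonprojective indecomposable $O$ with
\[
O\mid\Omega_A^k(X_{N'}) \quad\text{and}\quad O\mid\Omega_A^{k+p}(X_{N'}).
\]
Hence $O\mid\Omega_\Lambda^{N'+k}(M)$ and $O\mid\Omega_\Lambda^{N'+k+p}(M)$. By Step 1, $O$ is $\Lambda$-nonprojective. In all three cases $M$ has the required repeated nonprojective indecomposable summand, so $\Lambda$ satisfies GOP.
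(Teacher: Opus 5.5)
Your proposal is correct and is essentially the paper's argument with the cases reordered: the paper applies GOP for $A$ first, whenever some $X_s$ has infinite $A$-projective dimension, and turns your Case (c) into a contradiction. The point you flag in Step 3 is immediate and is exactly the paper's step: if $Y_n$ is $\Lambda$-projective then $0=\Omega_\Lambda(Y_n)\cong\Omega_B(Y_n)\oplus\rad_A\bigl(P_A(\top_B Y_n)\bigr)$, so those radical terms vanish outright and $X_{n+1}\cong\Omega_A(X_n)$. The only other omission is the preliminary reduction to a top pullback via Lemma~\ref{toppullbacktrans}, enlarging $\mathcal S$ by $\ind(K_A)$; the paper needs it because its syzygy recursion is proved only when also $\ker\alpha=\rad A$.
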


\begin{proposition}
Let $A$ and $B$ be Artin algebras satisfying GOP, let $D$ be semisimple, and let $\alpha:A\twoheadrightarrow D$ and $\beta:B\twoheadrightarrow D$ be surjective algebra homomorphisms. Then $\Lambda=A\times_D B$ satisfies GOP.
\end{proposition}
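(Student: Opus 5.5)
The plan is to use the decomposition of second syzygies over the pullback $\Lambda=A\times_D B$ together with the recursion formulas for higher syzygies announced above. Note that here no assumption $\ker\alpha=\rad A$ or $\ker\beta=\rad B$ is made, so I first reduce to the structural setting of those formulas. The radical of $\Lambda$ should contain $I=\ker\alpha\times 0$ and $J=0\times\ker\beta$ with $IJ=JI=0$, and $\Lambda/(I\oplus J)\cong D$ is semisimple. Hence $\rad\Lambda=\rad A\times_{\rad D}\rad B$ with the appropriate identification. I will verify that for every $\Lambda$-module $M$, the module $\Omega^2_\Lambda(M)$ decomposes as $X_A\oplus X_B$. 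Here $X_A$ is an $A$-module (via $\Lambda\twoheadrightarrow A$) that is a submodule of $\rad$ of a projective $A$-module, and similarly for $X_B$. Moreover, for $A$-modules of this kind, $\Omega_\Lambda$ is expressed through $\Omega_A$ plus contributions of semisimple $D$-tops that feed back into both sides. This is the recursion I take from the earlier part of the section, in the form used for Proposition 3.11.

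Next, given $M$ with $\pd_\Lambda M=\infty$, I analyze the recursion. Each higher syzygy $\Omega^{n}_\Lambda(M)$ for $n\ge 2$ is a direct sum of terms of the form $\Omega^k_A(U)$ and $\Omega^k_B(V)$, together with $\Lambda$-syzygies of simple $D$-modules regarded as $A$- or $B$-modules. There are finitely many simple $D$-modules $S$. So first I treat the modules $\Omega_A^k(S_A)$ and $\Omega_B^k(S_B)$, where $S_A$ denotes $S$ as an $A$-module. If some such module has infinite projective dimension over $A$ (resp. $B$), GOP for $A$ (resp. $B$) gives an indecomposable nonprojective $O$ repeating in two of its $A$-syzygies. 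Then I check that $O$ remains nonprojective over $\Lambda$ and appears in two distinct $\Lambda$-syzygies of $M$, provided the term occurs infinitely often in the recursion. Otherwise, all relevant $A$- and $B$-syzygies become projective after finitely many steps, and the only surviving terms come from simple $D$-tops. Their $\Lambda$-syzygies cycle through a finite set of data: which simples occur as tops at each stage. A pigeonhole argument on the finite set of simples yields a repeated indecomposable summand such as $\rad_A P_A(S)$ or $\rad_B P_B(S)$, or a second syzygy of a simple. Since $\pd M=\infty$, this summand must be nonprojective.

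Finally I need to handle the case where the infinite projective dimension of $M$ is carried by the initial pieces $X_A$ or $X_B$ themselves, rather than by the simple feedback. In that case I apply GOP directly to $X_A$ as an $A$-module and transport the repetition using the recursion. The main obstacle is this transport step. I must show that an indecomposable nonprojective $A$-summand of $\Omega_A^n(X_A)$ remains a nonprojective indecomposable summand of $\Omega_\Lambda^{n+2}(M)$, with the index shift uniform. This requires the recursion to realize $\Omega_A$ as a direct summand of $\Omega_\Lambda$ on these modules, with the extra summands only semisimple-top corrections. I would first try to prove this directly: an $A$-projective cover of a module lying in $\rad$ of an $A$-projective lifts to a $\Lambda$-projective cover whose kernel is the $A$-kernel plus a $B$-part coming from $\ker\beta$. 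This is the point where $IJ=0$ is used.
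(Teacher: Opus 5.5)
Your outline follows the paper's architecture: reduction to a top pullback, splitting of $\Omega^2_\Lambda(M)$, the recursion of Corollary~\ref{recursion}, GOP of a factor when some $X_s$ or $Y_s$ has infinite projective dimension, and finiteness plus pigeonhole otherwise. However, the decisive step in the last case fails as written.

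Pigeonholing on the finitely many simples only shows that some $S$ recurs among the tops. Hence $\rad_A(P_A(S))$, or a syzygy of it, recurs. The sentence ``since $\pd M=\infty$, this summand must be nonprojective'' does not follow. Infinite projective dimension only guarantees that each $\Omega^n_\Lambda(M)$ has \emph{some} $\Lambda$-nonprojective indecomposable summand. The particular summand your pigeonhole produces may be zero or $\Lambda$-projective, while the nonprojectivity is carried by other summands. For instance, $\rad_A(P_A(S))$ may contain a simple module that is projective over both $A$ and $B$.

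You need to reverse the order, as the paper does. Suppose every $X_s$ and $Y_s$ has finite projective dimension. Then Krull--Schmidt and the recursion show that all indecomposable summands of all $X_s,Y_s$ lie in one finite set $\mathcal F$. This set consists of the indecomposable summands of the finitely many nonzero syzygies of $X_0$, $Y_0$, and of $\rad_A(P_A(S))$, $\rad_B(P_B(S'))$ for the simples that actually occur as tops. It includes the $A$- or $B$-projective ones, which need not be $\Lambda$-projective. Now choose, for each $n$, a $\Lambda$-nonprojective indecomposable summand of $X_n\oplus Y_n$; this is where $\pd=\infty$ is used. Pigeonholing over $\mathcal F$ then gives the repetition.

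There are also three smaller points.

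\begin{enumerate}
\item You require a term of infinite projective dimension to occur ``infinitely often''. One occurrence suffices, because the recursion carries $\Omega_A$ of every summand of $X_n$ into $X_{n+1}$. As you wrote it, a term of infinite projective dimension that occurs only finitely often falls through both of your branches, since its syzygies never become projective.
\item The ideal $\ker\alpha\times 0$ lies in $\rad\Lambda$ only when $\ker\alpha=\rad A$, so your $I,J$ are wrong in general. Either work with $\rad A\times 0$ and $0\times\rad B$, or use Lemma~\ref{toppullbacktrans}. In the latter case, note that $A\times K_B$ and $B\times K_A$ still satisfy GOP by the product property.
\item The ``transport step'' you single out as the main obstacle is exactly Lemma~\ref{branchsyzygy} together with Corollary~\ref{recursion}. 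You also need the observation that an $A$-module which is nonprojective over $A$ remains nonprojective over $\Lambda$.
\end{enumerate}
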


A useful consequence is obtained when $A$ is representation-finite and every simple $A$-module has infinite projective dimension. If $\beta$ induces $B/\rad B\simeq D$, then $A\times_D B$ satisfies GOP (Corollary 3.12). In particular, for a field $K$, a finite-dimensional local $K$-algebra $B$ with residue field $K$, and any $m\geq2$, the algebra
\[
  K[t]/(t^m)\times_K B
\]
satisfies GOP. Section 4 examines the case $m=4$ and
\[
  B=B_q=K\langle X,Y\rangle/(X^2,Y^2,XY-qYX),
\]
where $-q^{-1}$ has infinite multiplicative order. The resulting algebra has infinite representation type and infinite global dimension, is not syzygy-finite, is not special biserial, and has nonzero radical cube.

Our second construction starts with an arbitrary Artin algebra $A$. Write $\Delta=A/\rad A$ and form the triangular matrix algebras
\[
  C_A=\begin{pmatrix}A&0\\ {}_{\Delta}\Delta_A&\Delta\end{pmatrix},
  \qquad
  \G(A)=\begin{pmatrix}C_A&0\\ {}_{\Delta}\Delta_{C_A}&\Delta\end{pmatrix}.
\]
We call $\G(A)$ the \emph{GOPization} of $A$. Using the description of modules over triangular matrix rings by triples, as in~\cite{HV00}, we compute its syzygies and prove that $\G(A)$ satisfies GOP for every $A$ (Proposition 5.7).

\begin{proposition}
For every Artin algebra $A$, the algebra $\G(A)$ satisfies GOP.
\end{proposition}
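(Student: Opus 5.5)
The plan is to compute syzygies over a triangular matrix algebra explicitly and observe that, from the second step on, every syzygy splits into an "old" part, a projective part, and a semisimple part concentrated at the new vertices. The semisimple part only has finitely many possible indecomposable summands, so a pigeonhole argument supplies the repeated summand $O$ required by GOP.

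\textbf{Setup.} Throughout I use the description of modules over $T=\begin{pmatrix}R&0\\ {}_UN_R&U\end{pmatrix}$ by triples $(X,Y,f)$, where $f\colon N\otimes_R X\to Y$, as in~\cite{HV00}. The indecomposable projective $T$-modules are $(P,N\otimes_RP,\mathrm{id})$ with $P\in R-\proj$, and $(0,Q,0)$ with $Q\in U-\proj$. Here $U=\Delta$ is semisimple, and $N={}_\Delta\Delta_R$ with $R=A$ or $R=C_A$, so $N\otimes_R X$ is essentially the top of $X$ at the relevant vertices.

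\textbf{Step 1 (one-step syzygy formula).} Let $(X,Y,f)$ be a $T$-module. Its projective cover is $(P_X,N\otimes P_X,\mathrm{id})\oplus(0,Q,0)$, where $Q\to\Cok f$ is a projective cover. The kernel has first component $\Omega_R X$ and some second component $K$. The crucial point is that $\Omega_RX\subseteq\rad P_X$. Hence $N\otimes\Omega_RX\to N\otimes P_X$ is zero, and the structure map of the kernel vanishes. This gives
\[
\Omega_T(X,Y,f)\simeq(\Omega_RX,0,0)\oplus(0,K,0),
\]
and $(0,K,0)$ is projective because $\Delta$ is semisimple. Applying the same formula to a module $(Z,0,0)$ gives
\[
\Omega_T(Z,0,0)\simeq(\Omega_RZ,0,0)\oplus(0,N\otimes Z,0).
\]

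\textbf{Step 2 (two layers).} First apply Step 1 with $T=C_A$ and $R=A$, so that $\Omega_{C_A}(Z,0,0)=(\Omega_AZ,0,0)\oplus(0,\top Z,0)$. Then apply it with $T=\G(A)$ and $R=C_A$. For any $\G(A)$-module $M$, we have $\Omega_{\G(A)}M\simeq(W,0,0)\oplus(\text{proj})$ for some $C_A$-module $W$. Iterating twice more, and discarding projective summands (which stay projective under further syzygies), shows that for $n\ge 2$ the module $\Omega^{n}_{\G(A)}M$ contains a summand
\[
\bigl((0,\top_A\Omega_A^{n-2}Z,0),0,0\bigr)
\]
for a suitable $A$-module $Z$ read off from $M$, up to a fixed shift. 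The other summands are of the form $((\Omega^{n-1}_AZ,0,0),0,0)$ or are projective.

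Next I reduce the condition $\pd M=\infty$ to $\pd_AZ=\infty$. By Step 1, all pieces not coming from $Z$ have finite projective dimension. Indeed, $((0,S,0),0,0)$ has projective dimension at most $1$: its syzygy is $((0,0,0),S,0)$, which is projective. Consequently $\Omega_A^{k}Z\neq0$ is nonprojective for all $k$, so $\top_A\Omega^{k}_AZ\neq0$ for every $k$.

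\textbf{Step 3 (pigeonhole and nonprojectivity).} There are only finitely many simple $\Delta$-modules. Hence some simple $S$ divides $\top_A\Omega_A^{k}Z$ for two distinct values of $k$. Set $O=((0,S,0),0,0)$. Then $O$ divides $\Omega^n_{\G(A)}M$ and $\Omega^{n+p}_{\G(A)}M$ for suitable $n\ge0$ and $p>0$, and $O$ is indecomposable. It remains to check that $O$ is nonprojective over $\G(A)$. Its projective cover is $((0,S,0),\Delta\otimes_{C_A}(0,S,0),\mathrm{id})$, so $O$ is nonprojective exactly when $\Delta\otimes_{C_A}(0,S,0)\neq0$.

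\textbf{Expected main obstacle.} I expect the hardest step to be this last identification: pinning down the right $C_A$-module structure of $\Delta$ in $\G(A)$ precisely enough to see that $\Delta\otimes_{C_A}(0,S,0)\simeq S\neq0$. This needs $\Delta_{C_A}$ to be the top of $C_A$ at the new vertices, and not to be pulled back only along $C_A\to A\to\Delta$. If the paper's convention is the latter, I would instead take $O$ to be the summand $((\top\text{-part}),\ldots)$ one layer up. That means using the top of $\Omega_{C_A}$-syzygies at the $A$-vertices and rechecking nonprojectivity there. Keeping track of the shift $n$ in Step 2 is the other place where care is needed.
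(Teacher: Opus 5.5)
Your proposal is correct and takes essentially the same route as the paper. The paper proves your Step 1 as a general lemma for triangular extensions by a semisimple algebra (with the bimodule semisimple on the right) and applies it to both layers. It then reduces $\pd_{\mathcal G(A)}L=\infty$ to $\pd_A X=\infty$, applies pigeonhole to the nonzero tops $\Delta\otimes_A P_{m-1}$ of the projective covers, and takes exactly your module $O=((0,E_i,0),0,0)$.

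The obstacle you anticipated does not arise. The paper gives $\Delta$ its right $C_A$-action through $\varepsilon$, the projection onto the new $\Delta$-corner. It then checks by explicit inverse maps that $\Delta\otimes_{C_A}(0,V,0)\cong V$. Hence the projective cover of $O$ has nonzero projective kernel $(0,E_i,0)$, and your fallback is not needed.
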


The construction also allows an exact comparison of little finitistic dimensions:
\[
  \text{fin.dim}\G(A)=\text{fin.dim} A+2
\]
(Lemma 5.8 and Theorem 5.9).

There is a further connection with singularity categories. We give explicit functors which yield
\[
  \Dsg(A)\simeq\Dsg\bigl(\G(A)\bigr)
\]
as triangulated categories (Theorem 5.11). The construction therefore realizes the singularity category of every Artin algebra as that of an algebra satisfying GOP. If the proposed counterexample in~\cite{OAIARC26} is valid, then our construction yields singularly equivalent finite-dimensional algebras such that one satisfies GARC whereas the other fails ARC. Under this assumption, neither ARC nor GARC is preserved under arbitrary triangle equivalences of singularity categories.

Finally, we examine the behaviour of GOP under equivalences. Wei proved that ARC is invariant under derived equivalence~\cite{Wei12}; derived invariance of GARC was established by Diveris and Purin and by Pan~\cite{DP12,Pan13}. Wei also proved derived invariance of syzygy-finiteness~\cite{Wei18}. For singular equivalences, Chen, Hu, Qin and Wang obtained preservation results for ARC under additional homological hypotheses~\cite{CHQW23}. We therefore consider whether GOP is preserved under these equivalences. GOP is Morita invariant and is preserved by stable equivalences commuting with syzygies. However, using Ladkani's derived equivalences for triangular matrix rings~\cite{Ladkani11}, we construct a pair of derived equivalent finite-dimensional algebras for which GOP holds for exactly one of them (Example 6.2). Consequently, GOP is not a derived invariant. Together with derived invariance of GARC, the example also shows that GARC does not imply GOP.

The paper is organized as follows. Section 2 introduces GOP and records its basic properties. Section 3 proves its implications for ARC and GARC and develops the pullback criteria. Section 4 gives a concrete pullback example. Section 5 constructs GOPizations and studies their finitistic dimensions and singularity categories. Section 6 gives the counterexample to derived invariance. Throughout, all algebras are Artin algebras and all modules are finitely generated left modules unless otherwise specified. Morphisms are composed from left to right, and functors from right to left.

\section{Preliminaries}
We first recall the notions of finite syzygy type and ultimately closed projective resolution.

\begin{definition}
Let ~$A$~ be an Artin algebra and ~$M\in A-\mod$~.
\begin{itemize}
\item The module ~$M$~ has finite syzygy type if\[\add(\{\Omega^j(M)|j\ge 0\})\]contains only finitely many isomorphism classes of indecomposable objects. The algebra $A$ is syzygy-finite if there exists an integer $s\ge0$ such that
\[
  \add\{\Omega_A^s(N)\mid N\in A-\mod\}
\]
contains only finitely many isomorphism classes of indecomposable modules.
\item The module ~$M$~ has an ultimately closed projective resolution if there exist ~$n\ge 0$~ and a decomposition into indecomposable modules \[\Omega^n(M)=\mathop{\oplus}\limits_{i\in I}N_i\]such that ~$\forall N_i,i\in I$~ one has ~$N_i|\Omega^{j_i}(M)$~ for some ~$j_i<n$~.
\end{itemize}
\end{definition}

\begin{definition}\label{GOP}
An Artin algebra $A$ satisfies the global obstruction property if, for every ~$M\in A-\mod$~ with ~$\pd_A M=\infty$~, there exist integers ~$n\ge0$~ and ~$p>0$~, and a nonzero indecomposable nonprojective $A$-module ~$O$~, such that ~$O|\Omega^n_A(M)$~ and ~$O|\Omega^{n+p}_A(M)$~.
\end{definition}
(The integers ~$n,p$~ and the module ~$O$~ may depend on ~$M$~. We write ~$N|M$~ when ~$N$~ is a direct summand of ~$M$~.)\\

The following elementary properties are immediate from the definition.

\begin{proposition}
Let $A$ and $B$ be Artin algebras.
\begin{enumerate}
\item If $A$ and $B$ are Morita equivalent, then $A$ satisfies GOP if and only if $B$ does.
\item The product $A\times B$ satisfies GOP if and only if both $A$ and $B$ satisfy GOP.
\item Every representation-finite algebra satisfies GOP.
\item Every syzygy-finite algebra satisfies GOP.
\item If $\text{gl.dim}A<\infty$, then $A$ satisfies GOP.
\end{enumerate}
\end{proposition}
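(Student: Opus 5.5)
The five items are checked one at a time, directly from Definition~\ref{GOP}. Every item rests on the same pigeonhole principle. Suppose the indecomposable nonprojective direct summands of the syzygies $\Omega^j_A(M)$, $j\ge0$, range over only finitely many isomorphism classes, and infinitely many of these syzygies are nonprojective. Then some class occurs in two distinct syzygies $\Omega^n_A(M)$ and $\Omega^{n+p}_A(M)$ with $p>0$. When $\pd_A M=\infty$, every $\Omega^j_A(M)$ is nonzero and nonprojective. By Krull--Schmidt, each $\Omega^j_A(M)$ then has at least one indecomposable nonprojective summand, since a module all of whose indecomposable summands are projective would be projective.

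For (1), let $F:A-\mod\to B-\mod$ be an equivalence. It is exact and preserves projectives, indecomposables and direct sums. Hence it preserves projective covers up to adding projective summands, and since $\Omega$ is computed with projective covers, $F\Omega^j_A(M)\simeq\Omega^j_B(FM)$. It also preserves infinite projective dimension and the relation $O\mid X$. The witnesses $n,p,O$ for $M$ therefore transport to $n,p,FO$ for $FM$. Symmetry via a quasi-inverse gives the converse.

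For (2), a module over $A\times B$ is $M_1\oplus M_2$ with $M_1\in A-\mod$ and $M_2\in B-\mod$. Projective covers and syzygies split componentwise, projectives of $A\times B$ are sums of projectives of the factors, and indecomposables live on one factor. Suppose both factors satisfy GOP and $\pd M=\infty$. Then some component, say $M_1$, has infinite projective dimension, and its witness $O$ is an indecomposable nonprojective $A\times B$-module dividing $\Omega^n(M)$ and $\Omega^{n+p}(M)$. Conversely, if $A\times B$ satisfies GOP and $M_1\in A-\mod$ has $\pd M_1=\infty$, apply GOP to $M_1$ regarded as an $A\times B$-module. Any indecomposable summand of $\Omega^j(M_1)$ is an $A$-module, so the witness works for $A$.

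For (3) and (4), use the pigeonhole principle. Item (3) is immediate since $A-\mod$ has finitely many indecomposables up to isomorphism. For (4), take $s$ as in the definition of syzygy-finite. For $j\ge s$, $\Omega^j_A(M)=\Omega^s_A(\Omega^{j-s}_A M)$ lies in the finite set of indecomposable classes of $\add\{\Omega^s_A(N)\}$. Item (5) holds vacuously, since no module has infinite projective dimension. The only point needing care is the compatibility of $\Omega$ with the Morita equivalence and with products, i.e.\ that minimality of projective covers is preserved; this is standard, and I expect no real obstacle.
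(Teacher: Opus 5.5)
Your proposal is correct and matches what the paper intends: the paper gives no proof, calling these properties immediate from the definition, and your pigeonhole argument over finitely many indecomposable classes for (3) and (4), your transport of minimal syzygies for (1) and (2), and vacuity for (5) carry out that routine verification. One small tidy-up: a Morita equivalence preserves projective covers exactly, since superfluity of the kernel is a lattice-theoretic property, so the phrase ``up to adding projective summands'' in (1) is unnecessary; in any case, by Krull--Schmidt, extra projective summands would not change the nonprojective indecomposable summands.
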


A stable equivalence that commutes with the syzygy functor preserves GOP. This applies, for example, to stable equivalences of Morita type between finite-dimensional algebras.

\begin{proposition}
Let $\Lambda$ and $\Gamma$ be Artin algebras. Suppose that $F:\Lambda-\underline{\mod}\loa\Gamma-\underline{\mod}$ is a stable equivalence for which there is a
natural isomorphism \[F\Omega_{\Lambda}\cong\Omega_{\Gamma}F.\] Then $\Lambda$ satisfies GOP if and only if $\Gamma$ satisfies GOP.
\end{proposition}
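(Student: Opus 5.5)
By symmetry (apply the argument to a quasi-inverse, which also commutes with syzygies up to natural isomorphism), it suffices to show: if $\Lambda$ satisfies GOP, then so does $\Gamma$. The plan is to transport an infinite-projective-dimension $\Gamma$-module back to $\Lambda$ along $F$, use GOP there, and push the resulting repeated summand forward again.

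\textbf{Step 1: stable invariants.} I would first record that for a module $X$ without projective summands, $X$ is determined up to isomorphism by its image in the stable category. Moreover, indecomposable nonprojective modules correspond to indecomposable nonzero objects there, since the stable endomorphism ring of such an $X$ is a quotient of the local ring $\End(X)$ and is nonzero. Also, $X$ is a direct summand of $Y$ in $\mod$, for $X$ indecomposable nonprojective, if and only if $X$ is a direct summand of $Y$ in $\underline{\mod}$. The forward direction is clear. For the converse, write $\underline{\mathrm{id}}_X=\underline{f}\,\underline{g}$ with $f:X\to Y$, $g:Y\to X$ (composition left to right). Then $fg-\mathrm{id}_X$ factors through a projective, hence lies in $\rad\End(X)$ because $X$ is nonprojective indecomposable. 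So $fg$ is invertible and $X\mid Y$. Finally, $\pd M=\infty$ if and only if $\Omega^j M$ is nonzero in $\underline{\mod}$ for all $j$, which is a stable condition.

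\textbf{Step 2: transport.} Let $N\in\Gamma\text{-}\mod$ with $\pd_\Gamma N=\infty$. Choose $M\in\Lambda\text{-}\mod$ with $FM\cong N$ in $\Gamma\text{-}\underline{\mod}$, using density of $F$. Iterating the natural isomorphism gives $F\Omega^j_\Lambda M\cong\Omega^j_\Gamma FM\cong\Omega^j_\Gamma N$ stably for all $j$. Here I use that $\Omega_\Gamma$ is a well-defined functor on $\underline{\mod}$, and that stably isomorphic modules have syzygies that are stably isomorphic. Since $F$ is an equivalence, $\Omega^j_\Lambda M\neq0$ stably for all $j$, so $\pd_\Lambda M=\infty$.

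\textbf{Step 3: conclude.} GOP for $\Lambda$ gives $n\ge0$, $p>0$ and an indecomposable nonprojective $O$ with $O\mid\Omega^n M$ and $O\mid\Omega^{n+p}M$. Then $FO$ is indecomposable and nonzero in $\Gamma\text{-}\underline{\mod}$, since equivalences preserve these properties. Let $O'$ be a $\Gamma$-module representing $FO$ with no projective summands. Then $O'$ is indecomposable nonprojective by Step 1. Applying $F$, $O'$ is a stable summand of $\Omega^n_\Gamma N$ and of $\Omega^{n+p}_\Gamma N$, so by Step 1 it is a genuine direct summand of both. Hence $\Gamma$ satisfies GOP.

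The main obstacle I expect is the bookkeeping in Step 1, namely passing from stable summands back to genuine summands. The radical argument above handles it. Beyond that, I only need to make sure that $\Omega$ computed via projective covers agrees with the stable-category functor up to projective summands, and this is harmless because $O'$ is nonprojective indecomposable.
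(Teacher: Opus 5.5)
Your proof is correct. The paper states this proposition without proof and treats it as immediate. Your argument supplies the routine verification it leaves implicit: use density of $F$ to pull a module $N$ with $\pd_\Gamma N=\infty$ back to $\Lambda$, apply GOP there, push the stable split monomorphisms forward, and upgrade them to genuine direct-summand inclusions via the local endomorphism ring and radical argument.
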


We shall also use the following standard fact.

\begin{lemma}
Let $A$ be an Artin algebra and $M\in A-\mod$. If\[\Ext^{>0}_A(M,M\oplus A)=0,\] then\[\Ext^{>0}_A(\Omega^n(M),\Omega^n(M))=0, \forall n\ge0.\]
\end{lemma}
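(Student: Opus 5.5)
The argument is an induction on $n$, using dimension shifting along the syzygy sequences. The case $n=0$ is the hypothesis. Write $N=\Omega^n(M)$. By dimension shifting, $\Ext^i_A(\Omega^n(M),X)\cong\Ext^{i+n}_A(M,X)$ for all $i>0$ and all $X$. Taking $X=A$ shows at once that $\Ext^{>0}_A(N,A)=0$ for every $n\ge0$.

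The core computation concerns $\Ext^i_A(N,N)$. Apply $\Hom_A(-,?)$ and $\Hom_A(?,-)$ to the short exact sequences
\[
0\to\Omega^{k+1}(M)\to P_k\to\Omega^k(M)\to0 .
\]
First fix the second argument. Since $\Ext^{>0}_A(\Omega^k(M),A)=0$, and hence also $\Ext^{>0}_A(\Omega^k(M),P)=0$ for every projective $P$, the sequence
\[
0\to\Omega^{k+1}(M)\to P_k\to\Omega^k(M)\to0
\]
placed in the second variable gives, for $i>0$ and any $j\ge0$,
\[
\Ext^i_A(\Omega^j(M),\Omega^{k+1}(M))\cong\Ext^{i-1}_A(\Omega^j(M),\Omega^k(M))\quad\text{for }i\ge2 .
\]
For $i=1$ it gives a surjection from $\Hom_A(\Omega^j(M),\Omega^k(M))$ onto $\Ext^1_A(\Omega^j(M),\Omega^{k+1}(M))$. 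This handles the second argument.

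It is cleaner, however, to shift only the first argument. Use $\Ext^i_A(\Omega^n(M),Y)\cong\Ext^{i+n}_A(M,Y)$, and then shift $Y=\Omega^n(M)$ back down using the vanishing of $\Ext^{>0}_A(M,A)$. Concretely, apply $\Hom_A(M,-)$ to the sequences for $k=0,\dots,n-1$. Since $\Ext^{>0}_A(M,P_k)=0$, we get
\[
\Ext^{m}_A(M,\Omega^{k+1}(M))\cong\Ext^{m-1}_A(M,\Omega^k(M))\quad\text{for }m\ge2 .
\]
Iterating $n$ times, for $i>0$ we obtain
\[
\Ext^{i+n}_A(M,\Omega^n(M))\cong\Ext^{i}_A(M,M)=0 ,
\]
because the degree stays at least $i+1\ge2$ at every step. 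Combining this with the first-argument shift gives $\Ext^i_A(\Omega^n(M),\Omega^n(M))=0$ for all $i>0$.

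The only point needing care is the bookkeeping of indices: one must check that every intermediate degree is at least $2$, so that the connecting maps are genuinely isomorphisms and not merely epimorphisms. This holds since we start in degree $i+n$ with $i\ge1$ and lower the degree by one exactly $n$ times, ending at degree $i\ge1$. The last step passes from degree $i+1\ge2$, which is still in the isomorphism range.
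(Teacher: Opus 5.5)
Your proof is correct. Shifting the first variable along the syzygy sequences gives $\Ext^i_A(\Omega^n(M),\Omega^n(M))\cong\Ext^{i+n}_A(M,\Omega^n(M))$, and shifting the second variable back down using $\Ext^{>0}_A(M,A)=0$, with every intermediate degree at least $2$, lands in $\Ext^i_A(M,M)=0$. The paragraph with $\Omega^j(M)$ in the first slot is unnecessary, and what you call an induction is really a direct iteration.

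The paper gives no proof of its own and only cites \cite[Lemma~1.4]{AR75}. Your chain of isomorphisms is the standard argument, the same computation the paper writes out in its proof that GOP implies GARC.
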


This is \cite[Lemma~1.4]{AR75}.

\section{GOP, ARC, GARC, and pullback constructions}
\begin{lemma}
Let $A$ be an Artin algebra, ~$M\in A-\mod$~, ~$s>0$~. If a nonzero indecomposable non-projective module $O$ is a direct summand of both $M$ and $\Omega^s_A(M)$, then$$\Ext^s_A(M,M)\neq 0.$$
\end{lemma}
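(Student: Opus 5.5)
The plan is to use the standard identification of $\Ext^s$ with stable homomorphisms from syzygies: for $s>0$ and any $N\in A-\mod$,
\[
\Ext^s_A(M,N)\cong \Ext^1_A(\Omega^{s-1}_A(M),N)\cong \underline{\Hom}_A(\Omega^s_A(M),N)\quad\text{modulo maps factoring through }P_{s-1}\to\Omega^{s}\text{-inclusion},
\]
more precisely $\Ext^s_A(M,N)\cong \Cok\bigl(\Hom_A(P_{s-1},N)\to\Hom_A(\Omega^s_A(M),N)\bigr)$, where $0\to\Omega^s_A(M)\xrightarrow{\iota} P_{s-1}\to\Omega^{s-1}_A(M)\to0$ is the projective-cover sequence. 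So it suffices to exhibit a homomorphism $f:\Omega^s_A(M)\to M$ that does not factor through $\iota$.

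The natural candidate is the composite $f:\Omega^s_A(M)\twoheadrightarrow O\hookrightarrow M$ of the split projection onto the summand $O$ followed by the split inclusion of $O$ into $M$. Suppose $f=\iota g$ for some $g:P_{s-1}\to M$. Composing on the left with the split inclusion $O\to\Omega^s_A(M)$ and on the right with the split projection $M\to O$, we get that $\mathrm{id}_O$ factors as $O\to\Omega^s_A(M)\xrightarrow{\iota}P_{s-1}\to O$. Hence $O$ is a direct summand of the projective $P_{s-1}$, so $O$ would be projective, contradicting the hypothesis. Thus $f$ represents a nonzero class in $\Ext^s_A(M,M)$.

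The only point requiring care is making sure the cokernel description is correct: it comes from applying $\Hom_A(-,M)$ to the short exact sequence above, giving $\Hom(P_{s-1},M)\to\Hom(\Omega^s M,M)\to\Ext^1(\Omega^{s-1}M,M)\to0$, together with dimension shifting $\Ext^1(\Omega^{s-1}M,M)\cong\Ext^s(M,M)$ along the projective resolution (valid since $s>0$). Note that we do not even need $\iota$ to be a projective cover here; any projective presentation works, and indecomposability of $O$ is used only to ensure $O$ is a genuine nonzero nonprojective summand. I expect no real obstacle; the key step is the factorization argument showing $f$ is not in the image of $\Hom(P_{s-1},M)$.
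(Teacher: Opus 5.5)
Your proposal is correct and is essentially the paper's proof: the same exact sequence $\Hom_A(P_{s-1},M)\to\Hom_A(\Omega^s_A M,M)\to\Ext^1_A(\Omega^{s-1}_A M,M)\to 0$, the same test map (split projection onto $O$ followed by split inclusion into $M$), and the same argument that a factorization through $P_{s-1}$ would make $\mathrm{id}_O$ factor through a projective module. One small caveat: your first display identifying $\Ext^s_A(M,N)$ with $\underline{\Hom}_A(\Omega^s_A M,N)$ is in general only a surjection onto the stable Hom; it is an isomorphism for all $N$ exactly when $\Ext^1_A(\Omega^{s-1}_A M,A)=0$, because only then does every map from $\Omega^s_A M$ to a projective module factor through $\iota$. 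The cokernel description you actually use is the correct one.
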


\begin{proof}
Take a minimal projective resolution\[\dots\loa P_s\loa\dots\loa P_0\loa M\loa 0.\]Let $f$ be the composite of a split epimorphism $\pi:\Omega_A^s(M)\twoheadrightarrow O$ and a split monomorphism $i:O\hookrightarrow M$. From the defining short exact sequence for the $s$th syzygy we obtain an exact sequence\[\Hom_A(P_{s-1},M)\overset{\a^*}\loa\Hom_A(\Omega^sM,M)\overset{\st}\loa\Ext^1_A(\Omega^{s-1}M,M)\loa 0.\] We claim that $f\notin\ker\theta$. This gives \[0\neq(f)\st\in\Ext^1_A(\Omega^{s-1}M,M)\cong \Ext^s_A(M,M).\] Suppose that $f\in\ker\theta=\im\alpha^*$, then $f=\alpha h$ for some $h:P_{s-1}\loa M$. Consider the composition $O\overset{i'}\loa\Omega^sM\overset{f}\loa M\overset{\pi'}\loa O$, notice that~$\Id_O=i'f\pi'=(i'\a)(h\pi')$~. Thus~$i'\a:O\loa P_{s-1}$~is a split monomorphism, so~$O\in A-\proj$~, a contradiction.
\end{proof}

The next result is now immediate.

\begin{theorem}\label{GOPTH}
Let $A$ be an Artin algebra satisfying GOP. If $M\in A-\mod$ satisfies\[\Ext^{>0}_A(M,M\oplus A)=0\]then $M\in A-\proj$. Consequently, every algebra satisfying GOP satisfies ARC.
\end{theorem}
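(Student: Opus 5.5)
We argue by contradiction. Suppose $M\in A-\mod$ satisfies $\Ext^{>0}_A(M,M\oplus A)=0$ but $M\notin A-\proj$. The first step is to show that $\pd_A M=\infty$, so that GOP applies to $M$.

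Suppose instead that $0<\pd_A M=d<\infty$. Then $\Ext^d_A(M,-)$ is nonzero on some module. Since every module is a quotient of a free module $A^k$ and $\Ext^{d+1}_A(M,-)=0$, right exactness of $\Ext^d_A(M,-)$ gives $\Ext^d_A(M,A)\neq0$. This contradicts the hypothesis with $i=d>0$. Hence $\pd_A M=\infty$. By GOP there are integers $n\ge0$, $p>0$ and a nonzero indecomposable nonprojective module $O$ with $O\mid\Omega^n_A(M)$ and $O\mid\Omega^{n+p}_A(M)$.

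Now put $N=\Omega^n_A(M)$. Syzygies are taken via projective covers, so $\Omega^p_A(N)\cong\Omega^{n+p}_A(M)$. Thus $O$ is a common nonprojective indecomposable summand of $N$ and of $\Omega^p_A(N)$. The preceding lemma (the one following Section 3's heading) then yields $\Ext^p_A(N,N)\neq0$. On the other hand, the lemma cited from \cite[Lemma~1.4]{AR75} gives $\Ext^{>0}_A(\Omega^n M,\Omega^n M)=0$, and in particular $\Ext^p_A(N,N)=0$. This contradiction shows $M\in A-\proj$, and ARC follows at once since it is exactly this implication for every $M$.

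No step here is a genuine obstacle; the argument just combines the two lemmas. The only point needing care is the reduction to infinite projective dimension. There one must note that a module of finite positive projective dimension $d$ has $\Ext^d_A(M,A)\neq0$, which is the standard fact argued above.
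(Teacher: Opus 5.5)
Your proof is correct and follows the argument the paper has in mind when it calls the theorem immediate. You reduce to $\pd_A M=\infty$, then apply the repeated-summand lemma to $N=\Omega^n_A(M)$ and contradict the Auslander--Reiten lemma $\Ext^{>0}_A(\Omega^n M,\Omega^n M)=0$; your explicit check that $\Ext^d_A(M,A)\neq 0$ when $0<\pd_A M=d<\infty$ supplies the one step the paper leaves unstated (it only recalls later that a nonprojective module in ${}^\bot A$ has infinite projective dimension).
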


In fact, GOP implies GARC. This is not merely a consequence of invariance under derived equivalence; GOP itself is not a derived invariant. We first show that repeated summands can occur at arbitrarily large degree gaps.

\begin{lemma}\label{arbitrarilylonggap}
Let $A$ satisfy GOP and let $X\in A-\mod$. Suppose that \[\Ext^{>0}_A(X,A)=0 \qquad\text{and}\qquad \pd_{A}X=\infty.\] Then for every $L\ge 0$, there exist $s\ge 0,q>L$ and $0\neq O\in\ind A\setminus A-\proj$ such that \[O|\Omega^s_A(X),O|\Omega^{s+q}_A(X).\]
\end{lemma}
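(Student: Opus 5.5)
The idea is to iterate GOP, applying it each time to the repeated summand itself, so that the gap grows by at least one per step. The hypothesis $\Ext^{>0}_A(X,A)=0$ is used only to make sure each repeated summand again has infinite projective dimension, so that GOP can be applied to it.

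\textbf{Step 1: GOP for summands of syzygies of $X$.} Suppose $0\neq O$ is indecomposable, nonprojective, and $O\mid\Omega^m_A(X)$ for some $m\ge0$. Dimension shifting gives $\Ext^i_A(\Omega^m_A(X),A)\cong\Ext^{i+m}_A(X,A)=0$ for all $i>0$. Since $O$ is a direct summand of $\Omega^m_A(X)$, also $\Ext^{>0}_A(O,A)=0$.

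If $\pd_A O=d<\infty$, then $\Ext^d_A(O,A)\neq 0$ when $d>0$, a standard fact over Artin algebras. Combined with the vanishing just shown, this forces $d=0$, so $O$ is projective, a contradiction. Hence $\pd_A O=\infty$, and GOP applies to $O$.

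\textbf{Step 2: the inductive step.} Assume we have $s\ge0$, $q>0$ and $O\in\ind A\setminus A-\proj$ with $O\mid\Omega^s_A(X)$ and $O\mid\Omega^{s+q}_A(X)$. By Step 1 and GOP applied to $O$, there are $a\ge0$, $b>0$ and $Q\in\ind A\setminus A-\proj$ with $Q\mid\Omega^a_A(O)$ and $Q\mid\Omega^{a+b}_A(O)$.

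Since $\Omega_A$, defined via projective covers, commutes with finite direct sums, $O\mid\Omega^s_A(X)$ gives $\Omega^a_A(O)\mid\Omega^{s+a}_A(X)$. Likewise $O\mid\Omega^{s+q}_A(X)$ gives $\Omega^{a+b}_A(O)\mid\Omega^{s+q+a+b}_A(X)$. Therefore
\[
Q\mid\Omega^{s+a}_A(X)\qquad\text{and}\qquad Q\mid\Omega^{(s+a)+(q+b)}_A(X).
\]
This is a new repetition with start $s'=s+a$ and gap $q'=q+b\ge q+1$, witnessed by an indecomposable nonprojective module.

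\textbf{Step 3: conclusion.} For the base case, apply GOP directly to $X$, which is allowed since $\pd_A X=\infty$. This gives a repetition with gap $q_0\ge1$. Iterating Step 2 $L$ times produces a repetition with gap at least $q_0+L>L$, as required.

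The only point needing care is Step 1, namely that a nonprojective summand of a syzygy cannot have finite projective dimension. This is exactly where $\Ext^{>0}_A(X,A)=0$ enters. The relevant fact is that a module $O$ with $0<\pd_A O=d<\infty$ has $\Ext^d_A(O,A)\neq0$. To see this, let $P_d$ be the last term of a minimal projective resolution of $O$. The map $\Ext^d_A(O,-)$ is right exact on $A-\mod$ and nonzero on $P_d$. Since $P_d$ is a summand of some $A^k$, the module $\Ext^d_A(O,A)^k\cong\Ext^d_A(O,A^k)$ is nonzero. Everything else is bookkeeping with the additivity of $\Omega_A$.
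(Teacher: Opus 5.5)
Your proof is correct and is essentially the paper's argument: apply GOP repeatedly to the newly found repeated summand, which stays in ${}^\bot A$ and so has infinite projective dimension, and use additivity of $\Omega_A$ so that the gaps $p_1+p_2+\cdots$ accumulate. The only difference is that you prove a fact the paper merely recalls, namely that a nonprojective module $Z$ with $\Ext^{>0}_A(Z,A)=0$ has $\pd_A Z=\infty$; the nonvanishing $\Ext^d_A(O,P_d)\neq 0$ you rely on comes from the non-split sequence $0\to P_d\to P_{d-1}\to\Omega^{d-1}_A(O)\to 0$.
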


\begin{proof}
We write
\[
  ^\bot A
  :=\{M\in A-\mod\mid \Ext_A^i(M,A)=0
      \text{ for all }i>0\}.
\]Recall that, for $Z\in ^\bot A$, then $$Z\notin A-\proj \quad\Longleftrightarrow\quad \pd_{A}Z=\infty.$$ Set $X_0=X$. By GOP, there exist $a_1\ge0$, $p_1>0$, and
$0\ne X_1\in\ind A\setminus A-\proj$ such that $$X_1|\Omega^{a_1}_{A}X_0,\qquad X_1|\Omega^{a_1+p_1}_{A}X_0.$$ Because $^\bot A$ is closed under syzygies and direct summands, ~$X_1\in ^\bot A$~; since $X_1$ is nonprojective, $\pd_A X_1=\infty$. Applying GOP to $X_1$, choose $a_2\ge0$, $p_2>0$, and
$0\ne X_2\in\ind A\setminus A-\proj$ with \[X_2|\Omega^{a_2}_{A}X_1, \qquad X_2|\Omega^{a_2+p_2}_{A}X_1.\] It follows that \[X_2|\Omega^{a_2}_{A}X_1|\Omega^{a_1+a_2}_{A}X_0, \qquad   X_2|\Omega^{a_2+p_2}_{A}X_1|\Omega^{(a_1+a_2)+(p_1+p_2)}_{A}X_0.\] Iterating this construction makes $p_1+\cdots+p_r$ larger than any prescribed
$L$, which proves the claim.
\end{proof}

\begin{proposition}
If an Artin algebra $A$ satisfies GOP, then it satisfies GARC.
\end{proposition}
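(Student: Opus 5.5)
We argue by contradiction. Suppose $M\in A-\mod$ and $r\ge0$ satisfy $\Ext^i_A(M,M\oplus A)=0$ for all $i>r$, but $\pd_A M>r$. If $\pd_A M$ were finite, say $d>r$, then $\Ext^d_A(M,A)\neq0$, since $\Ext^d_A(M,-)$ is right exact and nonzero on $A-\mod$ (and $A$ generates). This contradicts the hypothesis, so $\pd_A M=\infty$. Set $X=\Omega^r_A(M)$. Dimension shifting gives $\Ext^i_A(X,A)\cong\Ext^{i+r}_A(M,A)=0$ for all $i>0$, so $X\in{}^\bot A$, and $\pd_A X=\infty$. By Lemma \ref{arbitrarilylonggap} with $L=r$, there exist $s\ge0$, $q>r$ and a nonprojective indecomposable $O$ with $O\mid\Omega^s_A(X)$ and $O\mid\Omega^{s+q}_A(X)$.

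Next we put $Y=\Omega^s_A(X)=\Omega^{r+s}_A(M)$, so that $O\mid Y$ and $O\mid\Omega^q_A(Y)$. The proof of the Lemma preceding Theorem \ref{GOPTH} adapts directly. Let $f$ be the composite $\Omega^q_A(Y)\twoheadrightarrow O\hookrightarrow Y$. If $f$ factored through $P_{q-1}$, then $O$ would be projective. Hence $f$ gives a nonzero element of $\Ext^1_A(\Omega^{q-1}Y,Y)\cong\Ext^q_A(Y,Y)$. Thus $\Ext^q_A(\Omega^{r+s}M,\Omega^{r+s}M)\neq0$ with $q>r$.

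The main obstacle is converting this into a nonvanishing $\Ext^{j}_A(M,M)$ with $j>r$, since Lemma 1.4 of \cite{AR75} requires vanishing in all degrees $>0$. We use a shifted version of it.
\begin{itemize}
\item For $i>r$ and $k\ge0$, the vanishing $\Ext^{>r}_A(M,A)=0$ gives $\Ext^i_A(\Omega^k M,\Omega^k M)\cong\Ext^{i+k}_A(M,\Omega^k M)$ for $i\ge1$.
\item The short exact sequence $0\to\Omega^{k}M\to P_{k-1}\to\Omega^{k-1}M\to0$ yields $\Ext^{j}_A(M,\Omega^{k}M)\cong\Ext^{j-1}_A(M,\Omega^{k-1}M)$ whenever $j-1>r$. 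This uses $\Ext^{>r}_A(M,P)=0$ for projective $P$, which holds because $P\in\add A$.
\item Iterating the second step $k$ times gives $\Ext^{i+k}_A(M,\Omega^kM)\cong\Ext^i_A(M,M)$ for $i>r$.
\end{itemize}
Combining these with $k=r+s$ and $i=q>r$ gives $0\neq\Ext^q_A(\Omega^{r+s}M,\Omega^{r+s}M)\cong\Ext^q_A(M,M)=0$, a contradiction. Hence $\pd_A M\le r$.

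The care needed is in the index bookkeeping. In the first step we only need $i\ge1$, because the shifts in the first variable use the projective resolution of $M$ directly. In the second step every intermediate degree must stay above $r$; this holds since the degrees run from $q+k$ down to $q>r$. This is exactly why Lemma \ref{arbitrarilylonggap} was needed to force the gap $q>r$.
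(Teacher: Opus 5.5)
Your proposal is correct and follows the paper's proof essentially step for step. You pass to $\Omega^r_A(M)\in{}^\perp A$, use Lemma \ref{arbitrarilylonggap} to get a repeated nonprojective summand with gap $q>r$, and deduce $\Ext^q_A(Y,Y)\neq0$ from the lemma preceding Theorem \ref{GOPTH}. You then dimension-shift back to $\Ext^q_A(M,M)=0$, spelling out the index bookkeeping more carefully than the paper does.

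The only cosmetic difference is the finite case. You exclude $r<\pd_A M<\infty$ first, via $\Ext^d_A(M,A)\neq0$. The paper does it last, by splitting $0\to\Omega^d_A M\to P_{d-1}\to\Omega^{d-1}_A M\to0$. Both arguments rest on the same vanishing $\Ext^{>r}_A(M,P)=0$ for projective $P$.
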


\begin{proof}
Let $M\in A-\mod$ and suppose that, for some $n\ge 0$,
\[
\Ext^{>n}_A(M,M\oplus A)=0.
\]
We prove that $\pd_{A}M\le n$.

First suppose, for a contradiction, that $\pd_{A}M=\infty$. Set $N\coloneqq\Omega^n_A(M)$. Then $N\in ^\bot A$ and $\pd_AN=\infty$. Set $L=n$. Apply lemma\ref{arbitrarilylonggap} with $L=n$. We obtain $s\ge 0,q>n$ and $0\neq O\in\ind A\setminus A-\proj$ such that
\[
O|\Omega^s_A(N),\qquad O|\Omega^{s+q}_A(N).
\]
Set $Y=\Omega^s_A(N)=\Omega^{n+s}_A(M)$~, so that $O|Y,O|\Omega^q_A(Y)$, then $\Ext^q_A(Y,Y)\neq 0$. On the other hand, dimension shifting, together with $\Ext^{>n}_A(M,A)=0$, gives
\[
\Ext^q_A(Y,Y)\cong\Ext^{q+n+s}_A(M,\Omega^{n+s}_A(M))\cong\Ext^q_A(M,M)=0,
\]
because $q>n$. This contradiction shows that $\pd_A M<\infty$.

Write $d=\pd_A M$. If $d>n$, consider
\[
  0\longrightarrow\Omega_A^d(M)\longrightarrow P_{d-1}
  \longrightarrow\Omega_A^{d-1}(M)\longrightarrow 0.
\]
Since $\Omega_A^d(M)$ is projective and
\[
  \Ext_A^1\bigl(\Omega_A^{d-1}(M),\Omega_A^d(M)\bigr)
  \cong\Ext_A^d\bigl(M,\Omega_A^d(M)\bigr)=0,
\]
the sequence splits. Hence $\Omega_A^{d-1}(M)$ is projective, contradicting the definition of $d$. Therefore $d\le n$.
\end{proof}

We now introduce a special form of Milnor patching. One may think of it as gluing corresponding vertices of two bound quiver algebras. It is also the noncommutative, nonlocal analogue of the fibre product ring.

\begin{definition}
Let $A,B,D$ be Artin algebras, with $D$ semisimple. Suppose that $\a:A\loa D$ and $\b:B\loa D$ are surjective algebra homomorphisms satisfying $$\Ker \a=\radical A,\qquad\Ker\b=\radical B.$$ The algebra $$\Lambda=A\times_DB=\{(a,b)\in A\oplus B|(a)\a=(b)\b\}$$ is called the top pullback algebra of $A$ and $B$.
\end{definition}

The following lemma shows that every pullback along two surjections is isomorphic to a top pullback.

\begin{lemma}\label{toppullbacktrans}
Let $A,B,D$ be Artin algebras, let $D$ be semisimple, and let $\alpha:A\twoheadrightarrow D$ and $\beta:B\twoheadrightarrow D$ be surjective algebra homomorphisms. Then there exist Artin algebras $\widetilde A,\widetilde B,\widetilde D$, with $\widetilde D$ semisimple, and surjective algebra homomorphisms
\[
  \widetilde\alpha:\widetilde A\twoheadrightarrow\widetilde D,
  \qquad
  \widetilde\beta:\widetilde B\twoheadrightarrow\widetilde D,
\]
such that
\[
  \ker\widetilde\alpha=\rad\widetilde A,
  \qquad
  \ker\widetilde\beta=\rad\widetilde B,
\]
and
\[
  A\times_D B\cong
  \widetilde A\times_{\widetilde D}\widetilde B
\]
as algebras.
\end{lemma}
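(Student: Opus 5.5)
The idea is to pass to the radical quotients. Let $\Lambda=A\times_D B$, and write $\pi_A:\Lambda\to A$ and $\pi_B:\Lambda\to B$ for the two projections. Both are surjective. For instance, given $a\in A$, choose $b\in B$ with $(b)\beta=(a)\alpha$, which is possible since $\beta$ is onto; then $(a,b)\in\Lambda$.

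Set $\widetilde A=A$, $\widetilde B=B$ and $\widetilde D=\Lambda/\rad\Lambda$, or more concretely the following. Let $I=\alpha(\rad A)$ and $J=\beta(\rad B)$. These are ideals of $D$ because $\alpha,\beta$ are surjective. Since $D$ is semisimple, $I+J$ is generated by a central idempotent. I would take
\[
\widetilde D=D/(I+J)\cong A/(\rad A+\alpha^{-1}(J)),
\]
with $\widetilde\alpha$ and $\widetilde\beta$ the composites $A\to D\to\widetilde D$ and $B\to D\to\widetilde D$. Their kernels are $\alpha^{-1}(I+J)$ and $\beta^{-1}(I+J)$. These are not yet the radicals, so this first attempt must be corrected.

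The fix is to split off the factors. Let $D=D_1\times D_2$, where $D_2=I+J$ via the central idempotent $e$, and decompose further $D_2$ into the part hit by $I$ and the rest. The radical $\rad A$ is nilpotent, so $\alpha(\rad A)=I$ is a nilpotent ideal of the semisimple ring $D$, hence $I=0$. Likewise $J=0$. So $\rad A\subseteq\ker\alpha$ automatically, and the only issue is that $\ker\alpha$ may be strictly larger than $\rad A$. Lift $e$ (now defined via $\ker$, see below) to idempotents in $A$ and $B$. Since $\ker\alpha\supseteq\rad A$, $\alpha$ factors through the semisimple algebra $A/\rad A$. The kernel of the induced map $A/\rad A\to D$ is a direct factor $\overline{e_A}(A/\rad A)$, and $D$ is identified with the complementary factor. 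This suggests the following plan.

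\textbf{Step 1.} Write $A/\rad A=D\times E_A$ and $B/\rad B=D\times E_B$. Then $\Lambda$ contains $(\ker\alpha)\times(\ker\beta)$ as an ideal, with quotient $D$.

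\textbf{Step 2.} Define
\[
\widetilde D=D\times E_A\times E_B,\qquad \widetilde A=A\times E_B,\qquad \widetilde B=B\times E_A,
\]
with $\widetilde\alpha:\widetilde A\to A/\rad A\times E_B=D\times E_A\times E_B$ and $\widetilde\beta$ defined similarly. The kernels are $\rad A\times 0=\rad\widetilde A$ and $\rad\widetilde B$, as required, and both maps are surjective.

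\textbf{Step 3.} Verify $\widetilde A\times_{\widetilde D}\widetilde B\cong\Lambda$. An element of the left side is $((a,y),(b,x))$ with $\bar a=(\alpha(a),x)$ and $\bar b=(\beta(b),y)$, subject to $\alpha(a)=\beta(b)$. So $x$ and $y$ are determined by $a$ and $b$, and the map $((a,y),(b,x))\mapsto(a,b)$ is an algebra isomorphism onto $\Lambda$.

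The main obstacle is Step 1: identifying $\ker\alpha/\rad A$ as a direct factor $E_A$ compatible with the product structure. This uses semisimplicity of $A/\rad A$, where every ideal is generated by a central idempotent, together with the fact that $I=\alpha(\rad A)=0$ as a nilpotent ideal in a semisimple ring. The remaining checks are routine bookkeeping of components.
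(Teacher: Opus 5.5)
Your argument is correct and is essentially the paper's proof. Both split $A/\rad A\cong D\times K_A$ and $B/\rad B\cong D\times K_B$ using central idempotents, take $\widetilde A=A\times K_B$, $\widetilde B=B\times K_A$, $\widetilde D=D\times K_A\times K_B$, and identify the pullback through the projection $((a,y),(b,x))\mapsto(a,b)$, which is the paper's $\Psi$ with inverse $\Phi$. Your opening attempt with $D/(I+J)$ and the lifting of idempotents become unnecessary once you note that $\alpha(\rad A)=\beta(\rad B)=0$, so you can delete them.
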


\begin{proof}
Since $\alpha$ and $\beta$ are surjective, $(\rad A)\a\subset \rad D$ and $(\rad B)\b\subset \rad D$. Because $D$ is semisimple, both maps induce surjections $$\a^{'}:A/\rad A\loa D,\qquad \b^{'}:B/\rad B\loa D.$$

Set $K_A\coloneqq\Ker\a^{'}$ and $K_B\coloneqq\Ker\b^{'}$. Then $$A/\rad A\cong D\times K_A,\qquad B/\rad B\cong D\times K_B$$ as algebras. We also notice that $1_{K_A}=e_A$ and $1_{K_B}=e_B$ are central idempotents of the two semisimple quotient algebras $A/\rad A$ and $B/\rad B$. We define maps
\[
\begin{matrix}
  \begin{matrix}
 q_A:A\longrightarrow K_A\\
a\mapsto\overline{a}e_A
\end{matrix}&\begin{matrix}
 q_B:B\longrightarrow K_B\\
b\mapsto\overline{b}e_B
\end{matrix}
\end{matrix}
\]
Since $e_A,e_B$ are central idempotents, we know $q_A,q_B$ are algebra homomorphisms.

We also have the following algebra homomorphisms which are surjective
\[
\begin{matrix}
  \begin{matrix}
 (\a,q_A):A\longrightarrow D\times K_A\\
a\mapsto((a)\a,(a)q_A)
\end{matrix}&\begin{matrix}
 (\b,q_B):B\longrightarrow D\times K_B\\
b\mapsto((b)\b,(b)q_B)
\end{matrix}
\end{matrix}
\]

Set $\overset{\sim}{A}\coloneqq A\times K_B,\overset{\sim}{B}\coloneqq B\times K_A,\overset{\sim}{D}\coloneqq D\times K_A\times K_B$~, consider the following maps
\[
\begin{matrix}
  \begin{matrix}
 \overset{\sim}{\a}:\overset{\sim}{A}\loa \overset{\sim}{D}\\
(a,u)\mapsto((a)\a,(a)q_A,u)
\end{matrix}&\begin{matrix}
 \overset{\sim}{\b}:\overset{\sim}{B}\loa \overset{\sim}{D}\\
(b,v)\mapsto((b)\b,v,(b)q_B)
\end{matrix}
\end{matrix}
\]
$\overset{\sim}{\a},\overset{\sim}{\b}$ are surjective algebra homomorphisms, and it is clear that $\Ker \overset{\sim}{\a}=\radical \overset{\sim}{A},\Ker\overset{\sim}{\b}=\radical \overset{\sim}{B}$.

Define $\overset{\sim}{\Lambda}\coloneqq \overset{\sim}{A}\times_{\overset{\sim}{D}}\overset{\sim}{B}=\{((a,u),(b,v))\in (A\times K_B)\oplus (B\times K_A)|(a,u)\overset{\sim}{\a}=(b,v)\overset{\sim}{\b}\}$~, as one can see from the definition that $(a,u)\overset{\sim}{\a}=(b,v)\overset{\sim}{\b}$ if and only if $((a)\a,(a)q_A,u)=((b)\b,v,(b)q_B)$.

Write $\Lambda=A\times_DB$, consider the following maps
\[
\begin{matrix}
  \begin{matrix}
 \Phi:\Lambda\longrightarrow \overset{\sim}{\Lambda}\\
(a,b)\mapsto((a,(b)q_B),(b,(a)q_A))
\end{matrix}&\begin{matrix}
 \Psi:\overset{\sim}{\Lambda}\longrightarrow \Lambda\\
((a,u),(b,v))\mapsto(a,b)
\end{matrix}
\end{matrix}
\]
Both maps are well defined.  By $(a,b)\in\Lambda$, we know that $(a)\a=(b)\b$, and then $(a,(b)q_B)\overset{\sim}{\a}=((a)\alpha,(a)q_A,(b)q_B)=((b)\b,(a)q_A,(b)q_B)=(b,(a)q_A)\overset{\sim}{\b}$, so $((a,(b)q_B),(b,(a)q_A))\in \overset{\sim}{\Lambda}$. On the other hand, by $((a,u),(b,v))\in \overset{\sim}{\Lambda}$ we know that $(a)\a=(b)\b$, so $(a,b)\in\Lambda$. And since $q_A,q_B$ are algebra homomorphisms, it is clear that $\Phi$ is an algebra homomorphisms.

Notice that
\[
\begin{matrix}
(a,b)\Phi\Psi=((a,(b)q_B),(b,(a)q_A))\Psi=(a,b) \\
((a,u),(b,v))\Psi\Phi=(a,b)\Phi=((a,(b)q_B),(b,(a)q_A))
\end{matrix}
\]
By definition of $\overset{\sim}{\Lambda}$ we know that $(a)q_A=v,(b)q_B=u$~, and then $\Phi\Psi=\Id_{\Lambda},\Psi\Phi=\Id_{\overset{\sim}{\Lambda}}$~. Thus $\Phi$ is an
algebra isomorphism.
\end{proof}

For a top pullback $\Lambda=A\times_D B$, set
\[
  I=\{(x,0)\in\Lambda\mid x\in\rad A\},
  \qquad
  J=\{(0,y)\in\Lambda\mid y\in\rad B\}.
\]

\begin{remark}
The ideals $I$ and $J$ are orthogonal two-sided ideals, and
\[
\begin{gathered}
  \Lambda/J\cong A,
  \qquad \Lambda/I\cong B,
  \qquad IJ=JI=0,\\
  \Lambda/(I\oplus J)\cong D,
  \qquad \rad\Lambda=I\oplus J.
\end{gathered}
\]
Moreover, for every $P\in\Lambda-\proj$,
\[
  \rad_\Lambda P=IP\oplus JP.
\]
\end{remark}

\begin{proof}
The projection $\pi_A:\Lambda\to A$, $(a,b)\mapsto a$, is surjective and has
kernel $J$, so $\Lambda/J\cong A$; similarly $\Lambda/I\cong B$. The map
\[
  p:\Lambda\longrightarrow D,
  \qquad
  (a,b)\longmapsto(a)\alpha=(b)\beta,
\]
has kernel $I\oplus J$, and hence $\Lambda/(I\oplus J)\cong D$. Since $I$ and
$J$ are nilpotent and the last quotient is semisimple,
$\rad\Lambda=I\oplus J$. The remaining assertions follow directly.
\end{proof}

The next lemma prepares the analysis of higher syzygies over $\Lambda$.

\begin{lemma}
For every $M\in\Lambda-\mod$, there is a decomposition
\[
  \Omega_\Lambda^2(M)\cong X\oplus Y,
\]
where $JX=0$ and $IY=0$. Thus $X$ may be regarded as an $A$-module and $Y$ as
a $B$-module.
\end{lemma}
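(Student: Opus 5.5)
We follow the Dress--Kr\"amer argument for fibre products. First we show that the first syzygy of any module lies in $\rad\Lambda\cdot P$, and then that the second syzygy splits along $\rad\Lambda=I\oplus J$.

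\textbf{Step 1: locate $\Omega_\Lambda(M)$.} Let $\pi:P\twoheadrightarrow M$ be a projective cover and put $K=\Omega_\Lambda(M)=\ker\pi$. By minimality of the cover, $K\subseteq\rad_\Lambda P=IP\oplus JP$, using the Remark. Write $\rho_I,\rho_J$ for the two projections of $IP\oplus JP$. This decomposition is one of $\Lambda$-modules: $I$ and $J$ are two-sided ideals with $IJ=JI=0$, so $J(IP)=0$ and $I(JP)=0$.

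\textbf{Step 2: split $K$ after one more syzygy.} Consider the submodules $K_I=K\cap IP$ and $K_J=K\cap JP$, and let $K'=(K)\rho_I$ and $K''=(K)\rho_J$. Then $K$ sits in $K'\oplus K''$. We use that $IK\subseteq IP$ and $JK\subseteq JP$. For $k=k_1+k_2\in K$ with $k_1\in IP$ and $k_2\in JP$, we have $Ik=Ik_1$ because $Ik_2\subseteq IJP=0$. Hence $IK=IK'$, and similarly $JK=JK''$. Note that $\rad K=IK\oplus JK\subseteq K_I\oplus K_J\subseteq K$. So $K/(K_I\oplus K_J)$ is semisimple, and $K$ is an extension of a semisimple module by $K_I\oplus K_J$.

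Now take the projective cover $Q\twoheadrightarrow K$, so that $\Omega^2_\Lambda(M)\subseteq\rad Q$. Rather than pass through $K$, we would compute $\Omega^2$ directly as the kernel of $Q\to P$, whose image is $K$. Concretely:
\begin{itemize}
\item Decompose $Q=\rad$-compatibly, $\rad Q=IQ\oplus JQ$.
\item Note that the map $\varphi:Q\to P$ sends $IQ$ into $IP$ and $JQ$ into $JP$, since $\varphi$ is $\Lambda$-linear.
\item Since $\ker\varphi\subseteq\rad Q$ and $\varphi$ preserves both summands, we get
\[
\ker\varphi=(\ker\varphi\cap IQ)\oplus(\ker\varphi\cap JQ).
\]
Indeed, if $x=x_1+x_2\in\ker\varphi$ with $x_1\in IQ$ and $x_2\in JQ$, then $\varphi(x_1)+\varphi(x_2)=0$ with $\varphi(x_1)\in IP$ and $\varphi(x_2)\in JP$. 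Since $IP\cap JP=0$, both terms vanish.
\end{itemize}
Setting $X=\ker\varphi\cap IQ$ and $Y=\ker\varphi\cap JQ$ gives $\Omega^2_\Lambda(M)\cong X\oplus Y$ with $JX\subseteq JIQ=0$ and $IY=0$.

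\textbf{Main obstacle.} The key point to verify carefully is $\ker\varphi\subseteq\rad Q$, where $\varphi:Q\to P$ is the composite of the cover $Q\twoheadrightarrow K$ with the inclusion $K\hookrightarrow P$. This is immediate, because $\ker\varphi$ equals the kernel of the projective cover of $K$, and that kernel lies in $\rad Q$. The summand-preservation argument then only uses that $IP\cap JP=0$ inside $\rad P$, which is part of the Remark. Finally, $X$ is a $\Lambda/J\cong A$-module and $Y$ is a $\Lambda/I\cong B$-module, as claimed. With this route Step 2's semisimplicity discussion is unnecessary and can be dropped.
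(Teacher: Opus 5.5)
Your argument is correct and is essentially the paper's proof: both place $\Omega^2_\Lambda(M)$ inside $\rad P_1=IP_1\oplus JP_1$, use $\Lambda$-linearity of the map $P_1\to P_0$ to send $IP_1$ into $IP_0$ and $JP_1$ into $JP_0$, and conclude from the trivial intersection of $IP_0$ and $JP_0$ that both components of every kernel element lie in the kernel. As you note yourself, the preliminary discussion in Step 2 of $K_I$, $K_J$ and the semisimple quotient is not needed and can be deleted.
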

\begin{proof}
Consider an exact sequence
\[
0\loa\Omega^2M\loa P_1\overset{d}\loa P_0\loa M\loa 0.
\]
For $m\in\Omega_\Lambda^2(M)\subseteq\rad P_1=IP_1\oplus JP_1$, write $m=x+y$ with $x\in IP_1$ and $y\in JP_1$. Since $d$ is $\Lambda$-linear,
\[
  (x)d\in I\Omega_\Lambda(M)\subseteq IP_0,
  \qquad
  (y)d\in J\Omega_\Lambda(M)\subseteq JP_0.
\]
The equality $(x)d+(y)d=0$, together with $IP_0\bigcap JP_0=0$, implies $(x)d=(y)d=0$. Hence $x,y\in\Omega_\Lambda^2(M)$. Therefore
\[
  X:=\Omega_\Lambda^2(M)\bigcap IP_1,
  \qquad
  Y:=\Omega_\Lambda^2(M)\bigcap JP_1
\]
give the required direct sum. Finally, $JX=0$ and $IY=0$ because $JI=IJ=0$.
\end{proof}

We next compute the syzygies of $A$-modules and $B$-modules regarded as $\Lambda$-modules via the quotient maps. The following formulas, together with the preceding lemma, describe all syzygies of degree at least two.

\begin{lemma}\label{branchsyzygy}
Let $X\in A-\mod$ and $Y\in B-\mod$, regarded as $\Lambda$-modules. Then
\begin{align*}
  \Omega_\Lambda(X)
  &\cong \Omega_A(X)\oplus
  \rad_B\bigl(P_B(\top_A X)\bigr),\\
  \Omega_\Lambda(Y)
  &\cong \Omega_B(Y)\oplus
  \rad_A\bigl(P_A(\top_B Y)\bigr).
\end{align*}
\end{lemma}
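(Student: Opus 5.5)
The plan is to compute the $\Lambda$-projective cover of $X$ explicitly and then read off its kernel using the decomposition $\rad\Lambda=I\oplus J$ from the Remark. By symmetry it suffices to treat $X\in A-\mod$ with $JX=0$.

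\textbf{Step 1: the projective cover.} Since $\Lambda/\rad\Lambda\cong D\cong A/\rad A$, the simple $\Lambda$-modules, the simple $A$-modules and the simple $B$-modules are all identified with the simple $D$-modules. Hence $\top_\Lambda X=\top_A X$. Let $P=P_\Lambda(\top_A X)$. Then $P/JP$ is a projective $\Lambda/J\cong A$-module with top $\top_A X$, so $P/JP\cong P_A(\top_A X)$. Similarly $P/IP\cong P_B(\top_A X)$.

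\textbf{Step 2: the kernel.} The projective cover $P\to X$ kills $JP$, because $JX=0$. It therefore factors as
\[
P\twoheadrightarrow P/JP\cong P_A(\top_AX)\twoheadrightarrow X,
\]
and the second map is the $A$-projective cover, with kernel $\Omega_A(X)$. This gives a short exact sequence
\[
0\to JP\to \Omega_\Lambda(X)\to \Omega_A(X)\to 0.
\]

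\textbf{Step 3: splitting and identification.} We show that the sequence splits and identify $JP$.
\begin{itemize}
\item Since $IJ=0$, the module $JP$ is annihilated by $I$, so it is a $B$-module.
\item Using $P=IP\oplus JP\oplus(\text{complement})$ together with $\rad P=IP\oplus JP$, we get $JP\cong \rad P/IP=\rad(P/IP)$. Indeed, $(IP\oplus JP)/IP\cong JP$ because $IP\cap JP=0$. Hence $JP\cong\rad_B P_B(\top_AX)$.
\item The kernel $\Omega_\Lambda(X)$ lies in $\rad P=IP\oplus JP$. For the splitting, the same argument as in the preceding Lemma applies: an element $m=x+y$ with $x\in IP$ and $y\in JP$ maps to zero in $X$. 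Since $y$ maps into $JX=0$, the element $x$ also lies in the kernel.
\item Thus $\Omega_\Lambda(X)=(\Omega_\Lambda(X)\cap IP)\oplus JP$.
\item The first summand maps isomorphically onto its image in $P/JP$, since its intersection with $JP$ is zero. That image is $\Omega_A(X)$, because the surjection of Step 2 restricted to $IP$ is onto $\rad P_A\cap\Omega_A(X)=\Omega_A(X)$; here we use minimality, so that $\Omega_A(X)\subseteq\rad P_A(\top_AX)=I(P/JP)$.
\end{itemize}

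\textbf{Main obstacle.} The main care needed is the identification $P/JP\cong P_A(\top_AX)$ via top-compatibility of $\alpha$. In particular, one must check that the idempotent decompositions of $\Lambda$, $A$ and $B$ correspond under the projections. This holds because primitive idempotents of $D$ lift along the surjections $\Lambda\to A$ and $\Lambda\to B$.
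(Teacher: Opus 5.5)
Your proposal is correct and follows essentially the same route as the paper: take the $\Lambda$-projective cover $P$ of $X$, use $JP\subseteq\Omega_\Lambda(X)\subseteq IP\oplus JP$ to split $\Omega_\Lambda(X)=(\Omega_\Lambda(X)\cap IP)\oplus JP$, identify the first summand with $\Omega_\Lambda(X)/JP\cong\Omega_A(X)$ via the $A$-projective cover $P/JP\twoheadrightarrow X$, and identify $JP\cong(IP\oplus JP)/IP=\rad_B(P/IP)$ with $P/IP\cong P_B(\top_AX)$. One small wording slip: $\rad P=IP\oplus JP$ is not a direct summand of $P$, so drop the phrase ``$P=IP\oplus JP\oplus(\text{complement})$''; only $IP\cap JP=0$ is needed, and no idempotent-lifting check is required, since $P/JP$ and $P/IP$ are projective over $A$ and $B$ with top $\top_\Lambda P\cong\top_AX$.
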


\begin{proof}
We prove the assertion for $X$; the proof for $Y$ is similar.  Let $P\overset{t}\loa X$ be the projective cover of $X$ as a $\Lambda$-module. Consider the diagram with exact rows
\[
\begin{tikzcd}[ampersand replacement=\&]
0\arrow{r} \& JP \arrow{r}{f} \arrow[swap,dashed,hook]{d}{\r} \& P  \arrow{r}{g} \arrow[swap]{d}{\Id_P} \& P/JP\arrow{r} \arrow[dashed,two heads]{d}{\delta} \& 0\\
0\arrow{r} \&\Omega_{\Lambda}(X) \arrow{r}{s} \& P \arrow{r}{t} \&  X\arrow{r}\& 0
\end{tikzcd}
\]
Because $(JP)ft=J(P)ft\subset JX=0$ we know $\exists!\r:JP\hookrightarrow\Omega_{\Lambda}(X)$, thus $\exists!\delta:P/JP\twoheadrightarrow X$. The snake lemma gives
\[
\Ker\delta\cong\Cok\r=\Omega_{\Lambda}(X)/JP.
\]
Since
\[
JP\subset \Omega_{\Lambda}(X)\subset \rad_{\Lambda}P=IP\oplus JP,
\]
we have$\Omega_{\Lambda}(X)=(\Omega_{\Lambda}(X)\bigcap IP)\oplus JP$~. Now we identify $\Omega_{\Lambda}(X)\bigcap IP$ and $JP$.
\begin{enumerate}
\item
~$\Omega_{\Lambda}(X)\bigcap IP$~:\\
Since $\Omega_{\Lambda}(X)\bigcap IP\cong \Omega_{\Lambda}(X)/JP=\Ker\delta$ as $\Lambda$-modules, we consider $\Omega_{\Lambda}(X)/JP$.

As an $A$-module,
\[
  P/JP\cong (\Lambda/J)\otimes_\Lambda P\in A-\proj,
\]
and
\[
  \Omega_\Lambda(X)/JP
  \subseteq (IP\oplus JP)/JP
  \cong I(P/JP)=\rad_A(P/JP).
\]
Thus $P/JP\overset{\delta}\loa X$ is the projective cover of $X$ in $A-\mod$, and $$\Omega_{\Lambda}(X)\bigcap IP\cong \Omega_{\Lambda}(X)/JP\cong \Omega_{A}(X).$$
\item
~$JP$~:\\
Observe that there is a $\Lambda$-module isomorphism:$$JP\cong (IP\oplus JP)/IP=(IP+JP)/IP\cong J(P/IP)=\rad_B(P/IP).$$

On the other hand, there is a $\Lambda$-module isomorphism:$$\frac{P/IP}{J(P/IP)} \cong\frac{P}{IP+JP}=\frac{P}{IP\oplus JP}=\top_{\Lambda}P\cong \top_{\Lambda}X=\frac{X}{\rad_{\Lambda}X}=\frac{X}{IX\oplus JX}=\frac{X}{IX}=\top_AX.$$

And we know $\top_AX$ can be seen as a $D$-module, thus can be seen as a $B$-module.

We also have $\Lambda$-module isomorphism:$$\top_B(P/IP)=\frac{P/IP}{\rad_B(P/IP)}=\frac{P/IP}{J(P/IP)}\cong \top_AX.$$

Since $P/IP\in B-\proj$ and $\top_B(P/IP)\cong \top_AX$, we know that $P/IP\loa \top_AX$ is a projective cover in $B-\mod$, so $JP\cong \rad_B(P_B(\top_AX))$.
\end{enumerate}
Combining the two summands proves the formula.
\end{proof}

Combining the two lemmas above, we have the following recursion formula.

\begin{corollary}\label{recursion}
For every $M\in\Lambda-\mod$ and every $n\ge 2$, one may choose a decomposition
\[
\Omega^n_{\Lambda}(M)\cong X_n\oplus Y_n,
\]
where $X_n\in A-\mod$ and $Y_n\in B-\mod$, such that
\begin{align*}
  X_{n+1}
  &\cong\Omega_A(X_n)
   \oplus\rad_A\bigl(P_A(\top_B Y_n)\bigr),\\
  Y_{n+1}
  &\cong\Omega_B(Y_n)
   \oplus\rad_B\bigl(P_B(\top_A X_n)\bigr).
\end{align*}
\end{corollary}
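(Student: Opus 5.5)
The corollary follows by induction on $n$ from the two preceding lemmas: the decomposition lemma gives the base case $n=2$, and Lemma~\ref{branchsyzygy} gives the recursion.

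\emph{Base case.} For $n=2$, the decomposition lemma gives $\Omega^2_\Lambda(M)\cong X_2\oplus Y_2$ with $JX_2=0$ and $IY_2=0$. Since $\Lambda/J\cong A$ and $\Lambda/I\cong B$, we may regard $X_2\in A-\mod$ and $Y_2\in B-\mod$ via the quotient maps.

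\emph{Inductive step.} Suppose $\Omega^n_\Lambda(M)\cong X_n\oplus Y_n$ with $X_n\in A-\mod$ and $Y_n\in B-\mod$ (viewed as $\Lambda$-modules). Projective covers over an Artin algebra are additive under finite direct sums, so $\Omega_\Lambda$ commutes with finite direct sums up to isomorphism. Hence
\[
\Omega^{n+1}_\Lambda(M)\cong\Omega_\Lambda(X_n)\oplus\Omega_\Lambda(Y_n).
\]
Lemma~\ref{branchsyzygy} then gives
\[
\Omega_\Lambda(X_n)\cong\Omega_A(X_n)\oplus\rad_B\bigl(P_B(\top_A X_n)\bigr),\qquad
\Omega_\Lambda(Y_n)\cong\Omega_B(Y_n)\oplus\rad_A\bigl(P_A(\top_B Y_n)\bigr).
\]

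\emph{Regrouping.} Collect the terms annihilated by $J$ into
\[
X_{n+1}:=\Omega_A(X_n)\oplus\rad_A\bigl(P_A(\top_B Y_n)\bigr),
\]
and the terms annihilated by $I$ into
\[
Y_{n+1}:=\Omega_B(Y_n)\oplus\rad_B\bigl(P_B(\top_A X_n)\bigr).
\]
We must check that each summand is a module of the claimed kind with the $\Lambda$-structure induced through the correct quotient.
\begin{itemize}
\item $\Omega_A(X_n)$ and $\rad_A(P_A(\cdot))$ are $A$-modules, and in Lemma~\ref{branchsyzygy} they arise as $\Lambda$-submodules killed by $J$. Concretely, $\rad_A(P_A(\top_B Y))$ was realised as $IP\cong I(P/JP)$, and $JI=0$.
\item Symmetrically, $\Omega_B(Y_n)$ and $\rad_B(P_B(\cdot))$ are killed by $I$.
\end{itemize}
So $X_{n+1}\in A-\mod$, $Y_{n+1}\in B-\mod$, and $\Omega^{n+1}_\Lambda(M)\cong X_{n+1}\oplus Y_{n+1}$, which closes the induction.

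The only point needing care is that the isomorphisms in Lemma~\ref{branchsyzygy} are $\Lambda$-isomorphisms compatible with the module structures inflated from $A$ and $B$. This holds because the proof of that lemma constructed them as $\Lambda$-module isomorphisms. The phrase ``one may choose a decomposition'' reflects that the splitting into an $A$-part and a $B$-part is fixed only up to isomorphism at each step; no canonical choice is needed.
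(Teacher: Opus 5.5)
Your proof is correct and is the same argument the paper intends: the paper gives no separate proof, saying only that the corollary comes from ``combining the two lemmas above''. That is exactly your induction, with the second-syzygy decomposition lemma as the base case $n=2$ and Lemma~\ref{branchsyzygy} applied to each summand $X_n$, $Y_n$ in the inductive step. Your check that $\Omega_A(X_n)$ and $\rad_A(P_A(\top_B Y_n))$ are annihilated by $J$, and symmetrically that the other two summands are annihilated by $I$, supplies the one detail the paper leaves implicit.
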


We now give two ways to construct pullback algebras satisfying GOP.

\begin{proposition}
Let $\alpha:A\twoheadrightarrow D$ and $\beta:B\twoheadrightarrow D$ be
surjective algebra homomorphisms. Assume that $A$ satisfies GOP,
$\ker\beta=\rad B$, and $D$ is semisimple. Suppose that there is a subset $\mathcal{S}\subset D-\text{simp}$ such that
\begin{enumerate}
\item $\forall S\in D-\text{simp}\setminus\mathcal{S}$, $$\rad_A(P_A(S))\notin A-\proj;$$
\item the subcategory $$\mathfrak{T}_{\mathcal{S}}(B)\coloneqq\{Y\in B-\mod|\top_B(Y)\in\add\mathcal{S}\}$$ is of finite representation type.
\end{enumerate}
Then $\Lambda=A\times_D B$ satisfies GOP.
\end{proposition}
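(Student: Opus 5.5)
The plan is to reduce to a top pullback, use the recursion of Corollary \ref{recursion}, and find a repeated nonprojective indecomposable summand by one of three mechanisms: GOP for $A$, pigeonhole over the finitely many simples of $D$, or pigeonhole inside the representation-finite category $\mathfrak{T}_{\mathcal{S}}(B)$.

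\textbf{Reduction to a top pullback.} Apply Lemma \ref{toppullbacktrans}. Since $\ker\beta=\rad B$, we have $K_B=0$. Hence $\widetilde A=A$ still satisfies GOP, $\widetilde B=B\times K_A$, and $\widetilde D=D\times K_A$, where $K_A$ is semisimple. Enlarge $\mathcal S$ to $\widetilde{\mathcal S}=\mathcal S\cup K_A\text{-simp}$; condition (1) is then required only for the same simples as before. For condition (2), decompose a $\widetilde B$-module as a $B$-part plus a $K_A$-part. Its top lies in $\add\widetilde{\mathcal S}$ exactly when the $B$-part lies in $\mathfrak T_{\mathcal S}(B)$, and the $K_A$-part is semisimple. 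So $\mathfrak T_{\widetilde{\mathcal S}}(\widetilde B)$ is still of finite representation type, and we may assume $\ker\alpha=\rad A$.

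\textbf{Two preliminary observations.} Let $\pd_\Lambda M=\infty$ and write $\Omega^n_\Lambda M\cong X_n\oplus Y_n$ for $n\ge2$ as in Corollary \ref{recursion}.
\begin{enumerate}
\item A nonprojective indecomposable $A$-module $O$ is nonprojective over $\Lambda$. Indeed, if it were $\Lambda$-projective, then $O=O/JO\cong(\Lambda/J)\otimes_\Lambda O$ would be $A$-projective; the same argument works for $B$-modules.
\item Iterating the recursion gives $\Omega^k_A(X_n)\mid X_{n+k}$, and also $\rad_A(P_A(\top_BY_n))\mid X_{n+1}$.
\end{enumerate}

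\textbf{Case analysis.}

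Case 1: some $X_n$ has infinite $A$-projective dimension. Choose an indecomposable summand $Z$ of $X_n$ with $\pd_A Z=\infty$. GOP for $A$ gives a nonprojective indecomposable $O$ with $O\mid\Omega^a_A Z$ and $O\mid\Omega^{a+p}_AZ$. Then $O$ divides both $\Omega^{n+a}_\Lambda M$ and $\Omega^{n+a+p}_\Lambda M$, and $O$ is $\Lambda$-nonprojective by observation 1.

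Case 2: infinitely many $n$ have a simple $S\notin\mathcal S$ in $\top_BY_n$. Since $D$ has finitely many simples, one such $S$ occurs for two indices $n<n'$. By condition (1), $\rad_A(P_A(S))$ has a nonprojective indecomposable summand $O$. This $O$ divides both $X_{n+1}$ and $X_{n'+1}$, and we are done; note that finiteness of projective dimension plays no role here.

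Case 3: for $n\ge N$, every $Y_n$ lies in $\mathfrak T_{\mathcal S}(B)$, whose indecomposables form a finite set. If infinitely many $Y_n$ have a $\Lambda$-nonprojective indecomposable summand, pigeonhole gives the repetition.

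Remaining case: for $n\ge N'$, every $Y_n$ is $\Lambda$-projective and every $X_n$ has finite $A$-projective dimension. I expect this to be the main obstacle. In the minimal resolution, a $\Lambda$-projective summand of $\Omega^n_\Lambda M$ contributes nothing to the next syzygy, so $\Omega^{n+1}_\Lambda M\cong\Omega_\Lambda(X_n)$. By Lemma \ref{branchsyzygy} this gives $X_{n+1}\cong\Omega_A X_n$, up to the way the decomposition is chosen, which I must check is compatible. Taking $k=\pd_AX_{N'}+1$ yields $X_{N'+k}=0$, hence $\Omega^{N'+k+1}_\Lambda M=\Omega_\Lambda(Y_{N'+k})=0$, contradicting $\pd_\Lambda M=\infty$. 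Thus we are in Case 1 after all. The delicate point is to justify that the decomposition of Corollary \ref{recursion} can be taken compatible with the minimal resolution, so that $\Lambda$-projective summands of $Y_n$ really drop out.
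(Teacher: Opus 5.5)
Your proposal is correct and follows the paper's proof essentially step for step. It uses the same reduction via Lemma~\ref{toppullbacktrans} with $\widetilde{\mathcal S}=\mathcal S\cup\ind(K_A)$, and the same three cases: GOP for $A$, pigeonhole over the $D$-simples outside $\mathcal S$, and pigeonhole in $\mathfrak T_{\mathcal S}(B)$, where the eventually $\Lambda$-projective $Y_n$ are ruled out by a contradiction with $\pd_\Lambda M=\infty$. The ``delicate point'' you flag is not an obstacle: fix the recursively chosen decomposition of Corollary~\ref{recursion} once and for all; if $Y_n$ is $\Lambda$-projective, then by Lemma~\ref{branchsyzygy} $0=\Omega_\Lambda(Y_n)\cong\Omega_B(Y_n)\oplus\rad_A(P_A(\top_BY_n))$, so the radical term vanishes and the recursion itself gives $X_{n+1}\cong\Omega_A(X_n)$, which is exactly how the paper argues.
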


\begin{proof}
Use the notation of lemma\ref{toppullbacktrans}. Set $K_A=\ker\alpha'$ and replace
$$
  A\ \text{by }\widetilde A=A,
  \qquad
  B\ \text{by }\widetilde B=B\times K_A,
  \qquad
  D\ \text{by }\widetilde D=D\times K_A.
$$
By lemma\ref{toppullbacktrans}, this does not change the pullback up to algebra isomorphism. Taking
$$
  \widetilde{\mathcal S}
  =\mathcal S\bigcup\ind(K_A),
$$
the two hypotheses remain valid. Since GOP is invariant under algebra isomorphism, we may therefore assume from the beginning that
$$
  \ker\alpha=\rad A,
  \qquad
  \ker\beta=\rad B.
$$

Let $M\in\Lambda-\mod$ with $\pd_{\Lambda}M=\infty$. There is a decomposition $N\coloneqq \Omega_\Lambda^2(M)\cong X_0\oplus Y_0$ where $X_0\in A-\mod,Y_0\in B-\mod$. And $\forall s\ge 0$, we have $$\Omega^s_{\Lambda}(N)\cong X_s\oplus Y_s.$$ There are three cases.

\textbf{Case 1: $\pd_A X_s=\infty$ for some $s$.}

Since $A$ satisfies GOP, there exist $n\ge0$, $p>0$, and $0\ne O\in\ind A\setminus A-\proj$ such that $$O\mid\Omega^n_A(X_s),\qquad O\mid\Omega^{n+p}_A(X_s).$$ The recursion shows that
$$
  O\mid\Omega_\Lambda^{2+s+n}(M),
  \qquad
  O\mid\Omega_\Lambda^{2+s+n+p}(M).
$$ Moreover, $O$ is nonprojective as a $\Lambda$-module(Otherwise, applying $A\otimes_\Lambda-$ would make it projective over $A$). Hence $\Lambda$ satisfies GOP in this case.

\textbf{Case 2: every $X_s$ has finite $A$-projective dimension, and
for infinitely many $s$ one has} $$\bm{\ind(\top_B(Y_s))\bigcap(D-\text{simp}\setminus\mathcal{S})\neq\emptyset}$$

Because $D-\text{simp}\setminus\mathcal S$ is finite, some fixed $S\in D-\text{simp}\setminus\mathcal S$ occurs as a direct summand of $\top_B(Y_s)$ for infinitely many $s$. By hypothesis, $\rad_A(P_A(S))$ is not projective. Choose a nonprojective indecomposable summand $O$ of it. The recursion gives $O\mid X_{s+1}$ for infinitely many $s$, so $O$ occurs in two distinct syzygies of $M$.

\textbf{Case 3: every $X_s$ has finite $A$-projective dimension, and
there is $s_0$ such that $\top_B(Y_s)\in\add\mathcal S$ for all $s\ge s_0$.}

Since $\mathcal T_{\mathcal S}(B)$ is representation-finite, there is a finite set
\[
  \mathcal C=\{C_1,\dots,C_m\}
\]
containing every indecomposable direct summand of $Y_s$ for $s\ge s_0$. At least one non-projective $\Lambda$-module in $\mathcal C$ must occur in two different $Y_s$. Otherwise, after increasing $s_0$, every $Y_s$ would be projective as a $\Lambda$-module. Then
\[
  0=\Omega_\Lambda(Y_s)
  \cong\Omega_B(Y_s)
   \oplus\rad_A\bigl(P_A(\top_B Y_s)\bigr)
\]
for all sufficiently large $s$. Hence the radical term vanishes, and the recursion gives $X_{s+1}\cong\Omega_A(X_s)$. Since $\pd_A X_s<\infty$, one has $X_t=0$ for all sufficiently large $t$. At that stage,
\[
  Y_{t+1}\cong\Omega_B(Y_t)
  \oplus\rad_B\bigl(P_B(\top_A X_t)\bigr)=0,
\]
because $Y_t$ is projective over $B$ and $X_t=0$. This would force all later syzygies to vanish, contradicting $\pd_\Lambda M=\infty$. Thus a fixed indecomposable nonprojective $C_i$ occurs in two distinct syzygies, proving GOP.
\end{proof}

Taking $\mathcal S=\varnothing$ yields a useful special case.

\begin{corollary}\label{Nak1}
Let $\alpha:A\twoheadrightarrow D$ and $\beta:B\twoheadrightarrow D$ be surjective algebra homomorphisms. Suppose that $A$ is representation-finite, every simple $A$-module has infinite projective dimension, $\ker\beta=\rad B$, and $D$ is semisimple. Then $A\times_D B$ satisfies GOP, and hence ARC.
\end{corollary}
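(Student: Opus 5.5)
The plan is to deduce Corollary~\ref{Nak1} from the preceding proposition by taking $\mathcal S=\varnothing$. Then $\mathfrak T_{\varnothing}(B)$ consists of those $Y\in B-\mod$ with $\top_B(Y)\in\add\varnothing$, i.e.\ $\top_B Y=0$. By Nakayama's lemma this forces $Y=0$, so $\mathfrak T_\varnothing(B)$ has no indecomposable objects. In particular it is trivially of finite representation type, and hypothesis~(2) holds.

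Hypothesis~(1) must then be checked for every simple $D$-module $S$. Via $\alpha$, such an $S$ is a simple $A$-module, since $\rad A\subseteq\ker\alpha$ because $D$ is semisimple. I would argue by contradiction. If $\rad_A(P_A(S))$ were projective, the sequence
\[
0\to\rad_A P_A(S)\to P_A(S)\to S\to 0
\]
would give $\pd_A S\le 1$. This contradicts the assumption that every simple $A$-module has infinite projective dimension, so $\rad_A(P_A(S))\notin A-\proj$ for all $S$, which is hypothesis~(1) with $\mathcal S=\varnothing$.

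The remaining hypotheses carry over directly: $A$ is representation-finite, so it satisfies GOP by the elementary properties in Section~2; $\ker\beta=\rad B$; and $D$ is semisimple. The proposition therefore shows that $A\times_D B$ satisfies GOP, and Theorem~\ref{GOPTH} gives ARC.

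The only point needing care is the identification of $S$ as an $A$-module through $\alpha$. This works because the proposition's hypothesis~(1) is phrased for $D$-simples viewed over $A$, and every $D$-simple is indeed a simple $A$-module. Beyond that, the argument consists only of these routine verifications.
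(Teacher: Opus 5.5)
Your proposal is correct and follows the paper's argument: the paper obtains the corollary by taking $\mathcal S=\varnothing$ in the preceding proposition, with GOP for $A$ coming from representation-finiteness and ARC from Theorem~\ref{GOPTH}. You also make explicit the two checks the paper leaves implicit, namely that $\mathfrak T_\varnothing(B)=0$ and that a projective $\rad_A(P_A(S))$ would force $\pd_A S\le 1$.
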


The second construction does not require $\ker\beta=\rad B$, but it assumes
GOP on both sides.

\begin{proposition}
Let $A$ and $B$ be Artin algebras satisfying GOP, let $D$ be semisimple, and let $\alpha:A\twoheadrightarrow D$ and $\beta:B\twoheadrightarrow D$ be surjective algebra homomorphisms. Then $\Lambda=A\times_D B$ satisfies GOP.
\end{proposition}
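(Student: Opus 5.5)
The plan is to reduce to a top pullback, decompose syzygies via Corollary~\ref{recursion}, and then follow the two branches of the recursion, each of which runs on one side only.

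\textbf{Reduction.} First I will apply Lemma~\ref{toppullbacktrans}. It replaces $A$ by $\widetilde A=A\times K_B$ and $B$ by $\widetilde B=B\times K_A$, where $K_A,K_B$ are semisimple. By the product statement in the Proposition of Section 2, a product with a semisimple algebra satisfies GOP exactly when the factor does, since semisimple algebras have finite global dimension. So $\widetilde A$ and $\widetilde B$ still satisfy GOP. The pullback is unchanged up to isomorphism, so we may assume $\ker\alpha=\rad A$ and $\ker\beta=\rad B$.

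\textbf{Setup.} Fix $M$ with $\pd_\Lambda M=\infty$ and write $\Omega^{2+s}_\Lambda(M)\cong X_s\oplus Y_s$ as in Corollary~\ref{recursion}. Iterating the recursion shows that $\Omega^k_A(X_s)$ is a direct summand of $X_{s+k}$, and likewise $\Omega^k_B(Y_s)$ is a direct summand of $Y_{s+k}$. It is also true that an $A$-module $O$ which is nonprojective over $A$ is nonprojective over $\Lambda$: if it were $\Lambda$-projective, applying $A\otimes_\Lambda-$ would make it $A$-projective. The same holds for $B$.

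\textbf{Case analysis.}
\begin{itemize}
\item \emph{Case 1: $\pd_A X_s=\infty$ for some $s$.} GOP for $A$ gives $O$, $n\ge0$ and $p>0$ with $O\mid\Omega^n_A(X_s)$ and $O\mid\Omega^{n+p}_A(X_s)$. Then $O$ divides $\Omega^{2+s+n}_\Lambda M$ and $\Omega^{2+s+n+p}_\Lambda M$, and $O$ is nonprojective over $\Lambda$.
\item \emph{Case 2: $\pd_B Y_s=\infty$ for some $s$.} This is symmetric, using GOP for $B$.
\item \emph{Case 3: every $X_s$ and every $Y_s$ has finite projective dimension over its algebra.}
\end{itemize}

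\textbf{Case 3 (the main obstacle).} Here GOP on either side says nothing, so the repeated summand must come from the cross terms. For each $s$ there are the two radical terms
\[
\rad_A\bigl(P_A(\top_B Y_s)\bigr), \qquad \rad_B\bigl(P_B(\top_A X_s)\bigr).
\]
Each is a direct sum of modules from the finite set
\[
\{\rad_A P_A(S)\}\cup\{\rad_B P_B(S)\}, \qquad S\in D-\text{simp},
\]
because tops of $A$- or $B$-modules that come from these terms lie in $\add(D-\text{simp})$ on the relevant part.
\begin{enumerate}
\item Suppose, for infinitely many $s$, some cross term contains a nonprojective indecomposable summand. There are only finitely many candidates, so one fixed nonprojective indecomposable $O$ appears in $X_{s+1}$ or $Y_{s+1}$ for two distinct $s$. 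Hence $O$ is a summand of two distinct syzygies of $M$, which gives GOP.
\item Otherwise, for all large $s$, every cross-term summand is projective over $A$ or over $B$. Each new contribution then has finite projective dimension. The goal is to show that the syzygies eventually become $\Lambda$-projective, contradicting $\pd_\Lambda M=\infty$, in the same spirit as Case 3 of Proposition 3.11.
\end{enumerate}
The delicate point is in sub-case 2: a projective $A$-summand $Q$ of $X_{s+1}$ need not be $\Lambda$-projective. It feeds back through $\Omega_\Lambda(Q)\cong\rad_B P_B(\top_A Q)$, which may again be nonprojective.

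I plan to handle this with a finiteness argument. Consider the set $\mathcal R$ of all nonprojective indecomposable summands of modules $\rad_A P_A(S)$ and $\rad_B P_B(S)$, together with their $A$- and $B$-syzygies; these syzygies eventually vanish under the Case 3 hypotheses. Every summand of $\Omega^{m}_\Lambda(M)$ for large $m$ either lies in $\mathcal R$ or descends from the $\Omega_A$ and $\Omega_B$ iterates of $X_{s_0}$ and $Y_{s_0}$. By finite projective dimension, the latter contributions vanish after finitely many steps.

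The step I expect to need the most care is showing that $\mathcal R$ is finite. Each $\rad_A P_A(S)$ is a fixed module, and under the Case 3 hypotheses the summands in question have finite projective dimension, so only finitely many iterated syzygies are nonzero; this should give finiteness. Once that holds, all summands of $\Omega^m_\Lambda(M)$ for large $m$ lie in a finite set $\mathcal R$ of isomorphism classes. Since $\pd_\Lambda M=\infty$, infinitely many of these syzygies have a $\Lambda$-nonprojective summand. By pigeonhole, one indecomposable nonprojective module repeats in two distinct syzygies, which proves GOP for $\Lambda$.
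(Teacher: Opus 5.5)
Your proposal is correct and follows essentially the paper's route. You reduce to a top pullback (your check that $A\times K_B$ and $B\times K_A$ still satisfy GOP makes explicit a step the paper leaves implicit), handle the branches of infinite projective dimension by GOP on $A$ or $B$, and otherwise observe that all summands of the $X_s$ and $Y_s$ lie among finitely many syzygies of $X_0$, $Y_0$ and of the modules $\rad_A P_A(S)$, $\rad_B P_B(S)$, then apply pigeonhole. Your sub-case split in Case 3 is unnecessary, since the finiteness argument covers both sub-cases. The finite set should be built only from the simples $S$ that actually occur in some $\top_B Y_i$ or $\top_A X_j$ (the paper's $\Sigma_A,\Sigma_B$), because only for those does the Case 3 hypothesis force $\rad_A P_A(S)$ or $\rad_B P_B(S)$ to have finite projective dimension.
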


\begin{proof}
By lemma\ref{toppullbacktrans}, we may assume that $\Lambda$ is a top pullback. Let $M\in\Lambda-\mod$ have infinite projective dimension, and write
\[
  \Omega_\Lambda^2(M)=X_0\oplus Y_0,
  \qquad
  \Omega_\Lambda^s\bigl(\Omega_\Lambda^2(M)\bigr)=X_s\oplus Y_s
\]
as in corollary\ref{recursion}. If $\pd_A X_s=\infty$ or $\pd_B Y_s=\infty$ for some $s$, GOP for $A$ or $B$, respectively, gives the required repeated non-projective summand exactly as in Case~1 of proposition\ref{Nak1}.

Assume now that
\[
  \pd_A X_s<\infty
  \quad\text{and}\quad
  \pd_B Y_s<\infty
  \qquad\text{for every }s.
\]
Define finite sets of simple $D$-modules by
\begin{align*}
  \Sigma_A
  &:=\bigl\{S\in D-\text{simp}\mid
       S\mid\top_B(Y_i)\text{ for some }i\ge0\bigr\},\\
  \Sigma_B
  &:=\bigl\{S'\in D-\text{simp}\mid
       S'\mid\top_A(X_j)\text{ for some }j\ge0\bigr\}.
\end{align*}
The recursion and the finite projective dimension assumptions imply

\begin{align*}
  \pd_A\rad_A(P_A(S))&<\infty \qquad(S\in\Sigma_A),\\
  \pd_B\rad_B(P_B(S'))&<\infty \qquad(S'\in\Sigma_B).
\end{align*}

Consequently, every indecomposable summand occurring among all $X_s$ belongs to the finite set of indecomposable summands in the finitely many syzygies of $X_0$ and the modules $\rad_A(P_A(S))$, $S\in\Sigma_A$. The same argument applies to all $Y_s$.

Since $\pd_\Lambda\Omega_\Lambda^2(M)=\infty$, every $X_s\oplus Y_s$ has a non-projective indecomposable $\Lambda$-module as a direct summand. Restriction of scalars along $\Lambda\twoheadrightarrow A$ and $\Lambda\twoheadrightarrow B$ preserves indecomposability and reflects isomorphisms. Hence only finitely many isomorphism classes of possible non-projective summands occur. The pigeonhole principle therefore produces one that occurs in two different syzygies of $M$.
\end{proof}

We record the following description of simple modules over a top pullback.

\begin{proposition}
A simple $\Lambda$-module $S$ is non-projective if and only if, when regarded as a module over one of the quotient algebras, it is a non-projective simple $A$-module or a non-projective simple $B$-module.
\end{proposition}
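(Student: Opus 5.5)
We work in a top pullback $\Lambda=A\times_D B$, so that $\rad\Lambda=I\oplus J$ with $\Lambda/J\cong A$, $\Lambda/I\cong B$ and $\Lambda/(I\oplus J)\cong D$. Every simple $\Lambda$-module $S$ is annihilated by $I\oplus J$, so it is a simple $D$-module. Via the surjections $\alpha,\beta$ it is therefore simultaneously a simple $A$-module and a simple $B$-module. The statement should then be read as: $S$ is projective over $\Lambda$ if and only if it is projective both as an $A$-module and as a $B$-module. Equivalently, $S$ is non-projective if and only if it is a non-projective simple $A$-module or a non-projective simple $B$-module. I would prove this equivalence directly from Lemma~\ref{branchsyzygy}.

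\textbf{Computing the syzygy.} Apply Lemma~\ref{branchsyzygy} to $S$, regarded as an $A$-module with $JS=0$. We have $\top_A S=S$, so
\[
\Omega_\Lambda(S)\cong\Omega_A(S)\oplus\rad_B\bigl(P_B(S)\bigr).
\]
Since $S$ is simple, $\Omega_A(S)=\rad_A(P_A(S))$. Hence $\Omega_\Lambda(S)\cong\rad_A(P_A(S))\oplus\rad_B(P_B(S))$.

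\textbf{The equivalence.} $S$ is $\Lambda$-projective if and only if $\Omega_\Lambda(S)=0$, since $\Omega$ is computed from projective covers. By the displayed formula, this holds if and only if both $\rad_A(P_A(S))=0$ and $\rad_B(P_B(S))=0$. These two conditions say exactly that $S\cong P_A(S)$ is $A$-projective and $S\cong P_B(S)$ is $B$-projective. Negating both sides gives the claim: $S$ is non-projective over $\Lambda$ precisely when it fails to be projective over $A$ or over $B$.

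\textbf{Main obstacle.} The delicate point is interpretation rather than computation. For a general pullback along $\alpha,\beta$, a simple $\Lambda$-module might be killed by $J$ but not by $I$, that is, be an $A$-simple not coming from $D$. To handle this I would first invoke Lemma~\ref{toppullbacktrans} to reduce to the top-pullback situation. In that setting the description above applies to all simples; a simple of $\widetilde A=A\times K_B$ coming from a factor $K_B$ is handled by the same formula, with the appropriate projective covers in $\widetilde A$ and $\widetilde B$. If instead one works with the original non-top pullback directly, one needs a separate check that $\rad\Lambda\supseteq\{(x,0)\mid x\in\ker\alpha\}$-type ideals behave as in the Remark. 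I would avoid this by stating the result for top pullbacks.
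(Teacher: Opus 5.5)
Your proposal is correct and is essentially the paper's argument. The paper notes that $S$ is non-projective iff $\rad_\Lambda P_\Lambda(S)=IP_\Lambda(S)\oplus JP_\Lambda(S)\neq 0$, and identifies the two summands with $\rad_A(P_A(S))$ and $\rad_B(P_B(S))$ via the proof of Lemma~\ref{branchsyzygy}; this is exactly your computation of $\Omega_\Lambda(S)$ from that lemma, and since the result is stated only for top pullbacks, your reduction via Lemma~\ref{toppullbacktrans} is not needed.
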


\begin{proof}
Let $P_\Lambda(S)\twoheadrightarrow S$ be the projective cover. The module $S$ is non-projective precisely when
\[
  \rad_\Lambda P_\Lambda(S)
  =IP_\Lambda(S)\oplus JP_\Lambda(S)\ne0.
\]
By the proof of lemma\ref{branchsyzygy},
\[
\begin{aligned}
  P_\Lambda(S)/JP_\Lambda(S)&\twoheadrightarrow S,\\
  P_\Lambda(S)/IP_\Lambda(S)&\twoheadrightarrow S
\end{aligned}
\]
are the projective covers of $S$ over $A$ and $B$, respectively, and
\begin{align*}
  IP_\Lambda(S)
  &\cong\rad_A\bigl(P_A(S)\bigr),\\
  JP_\Lambda(S)
  &\cong\rad_B\bigl(P_B(S)\bigr).
\end{align*}
The assertion follows.
\end{proof}

In the following situation, the little finitistic dimension is controlled by one branch of the pullback.
\begin{proposition}\label{Nak2}
Assume that every simple $A$-module has infinite projective dimension. If $\text{fin.dim} A<\infty$, then
\[
  \text{fin.dim}\Lambda\le\text{fin.dim} A+2.
\]
\end{proposition}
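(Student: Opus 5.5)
Throughout I work in the setting of the corollary before the statement: $\Lambda=A\times_D B$ is a top pullback. Then Corollary~\ref{recursion} gives, for every $M\in\Lambda-\mod$ and all $s\ge0$, decompositions
\[
\Omega^{2+s}_\Lambda(M)\cong X_s\oplus Y_s,\qquad X_s\in A-\mod,\quad Y_s\in B-\mod,
\]
satisfying the recursion
\[
X_{s+1}\cong\Omega_A(X_s)\oplus\rad_A\bigl(P_A(\top_B Y_s)\bigr),\qquad
Y_{s+1}\cong\Omega_B(Y_s)\oplus\rad_B\bigl(P_B(\top_A X_s)\bigr).
\]
I will not invoke Lemma~\ref{toppullbacktrans}. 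Its reduction replaces $A$ by $A\times K_B$, which adds projective simple modules and so destroys the hypothesis on simple $A$-modules. Fix $M$ with $d=\pd_\Lambda M<\infty$. If $d\le 2$ there is nothing to prove, so assume $d>2$. The plan is to show that the $B$-parts $Y_s$ all vanish. Then the resolution of $\Omega^2_\Lambda(M)$ is governed entirely by the $A$-module $X_0$.

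\textbf{Step 1: every $X_s$ has finite $A$-projective dimension.} Suppose $\pd_A X_s=\infty$ for some $s$. Iterating the recursion shows that $\Omega_A^k(X_s)$ is a direct summand of $X_{s+k}$ for all $k\ge0$, and $\Omega_A^k(X_s)\ne0$ for every $k$. Hence $\Omega^{2+s+k}_\Lambda(M)\ne0$ for all $k$, contradicting $\pd_\Lambda M<\infty$.

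\textbf{Step 2: $Y_s=0$ for all $s\ge0$.} If $Y_s\ne0$, choose a simple summand $S$ of $\top_B(Y_s)$. This $S$ is a simple $D$-module, hence a simple $A$-module via $\alpha$, since $\ker\alpha=\rad A$. By hypothesis $\pd_A S=\infty$, and therefore $\pd_A\rad_A(P_A(S))=\infty$. This module is a summand of $\rad_A\bigl(P_A(\top_B Y_s)\bigr)$, and so of $X_{s+1}$. Then $\pd_A X_{s+1}=\infty$, contradicting Step~1. This is the only place the simple-module hypothesis is used, and it is the key point of the argument.

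\textbf{Step 3: the bound.} With all $Y_s=0$, the recursion collapses to $X_{s+1}\cong\Omega_A(X_s)$, so $X_s\cong\Omega_A^s(X_0)$. The module $X_0$ is a summand of $\Omega^2_\Lambda(M)$ with finite $A$-projective dimension by Step~1. Set $k=\pd_A X_0\le\text{fin.dim} A$. Then $X_k$ is $A$-projective, and so $X_{k+1}=\Omega_A(X_k)=0$. Moreover $Y_{k+1}=0$ by Step~2, which is consistent with the recursion term $\rad_B\bigl(P_B(\top_A X_k)\bigr)$ being forced to vanish. Therefore $\Omega^{3+k}_\Lambda(M)=0$, so $\pd_\Lambda M\le k+2\le\text{fin.dim} A+2$.

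The main subtlety is bookkeeping. The statement must be read for a top pullback, and I need the fact, used in Step~2, that every simple $D$-module is a simple $A$-module of infinite projective dimension.
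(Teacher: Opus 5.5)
Your proof is correct and follows the paper's argument. You use the recursion of Corollary~\ref{recursion} to show that every $Y_s$ vanishes, since a simple summand $S$ of $\top_B(Y_s)$ would place the infinite-projective-dimension module $\rad_A(P_A(S))$ inside $X_{s+1}$; the $\Lambda$-syzygies of $\Omega^2_\Lambda(M)$ are then just the $A$-syzygies of $X_0$. Your Step~1 (iterating $\Omega_A^k(X_s)\mid X_{s+k}$) is the paper's observation that infinite $A$-projective dimension forces infinite $\Lambda$-projective dimension, and it also makes explicit why $\pd_A X_0<\infty$, which the paper uses tacitly.
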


\begin{proof}
Let $M\in\Lambda-\mod$ have finite projective dimension and put
\[
  N=\Omega_\Lambda^2(M)=X_0\oplus Y_0,
  \qquad
  \Omega_\Lambda^s(N)=X_s\oplus Y_s\quad(s\ge0).
\]
We first show that $Y_s=0$ for every $s\ge0$. If $Y_s\ne0$, then $\top_B(Y_s)$ is a nonzero semisimple $A$-module, and therefore
\[
  \pd_A\top_B(Y_s)=\infty.
\]
It follows that
\[
  \pd_A\rad_A\bigl(P_A(\top_B Y_s)\bigr)=\infty,
\]
and this module is a direct summand of $X_{s+1}$ and hence of $\Omega_\Lambda^{s+1}(N)$.

We use the following observation: if an $A$-module $Z$ has infinite $A$-projective dimension, then it also has infinite $\Lambda$-projective dimension. Otherwise some $\Omega_\Lambda^m(Z)$ would be projective; the iterated formula in lemma\ref{branchsyzygy} shows that $\Omega_A^m(Z)$ is a direct summand of it. Thus $\Omega_A^m(Z)$ would be projective over $\Lambda$, hence also over $A$, a contradiction. Applying this observation to the radical above contradicts $\pd_\Lambda M<\infty$. Therefore $Y_s=0$ for all $s$.

It follows that
\[
  \Omega_\Lambda^s(N)\cong\Omega_A^s(X_0)
  \qquad(s\ge0).
\]
Let $d=\text{fin.dim} A$ and set $e=\pd_A X_0\le d$. Then $\Omega_\Lambda^{e+1}(N)=0$, so $\pd_\Lambda N\le e$ and
\[
  \pd_\Lambda M\le d+2.
\]
Taking the supremum over all modules of finite $\Lambda$-projective dimension proves the result.
\end{proof}

\section{A concrete GOP algebra obtained by top pullback}

We now give a concrete top pullback algebra $\Lambda$ satisfying GOP. It also has the following features:
\begin{enumerate}
\item $\Lambda$ is not commutative;
\item $\Lambda$ is of infinite representation type;
\item $\Lambda$ is not syzygy-finite;
\item $\Lambda$ is not special biserial;
\item $\text{gl.dim}\Lambda=\infty$;
\item $\Lambda$ is not radical cube zero.
\end{enumerate}

\begin{example}\label{eg}
Let $K$ be a field and let $q\in K^\times$ be such that $-q^{-1}$ has infinite multiplicative order(For instance, take $K=\mathbb Q$ and $q=2$). Set
\[
  A=K[t]/(t^4),
  \qquad
  B_q=K\langle X,Y\rangle/(X^2,Y^2,XY-qYX).
\]
Then the top pullback algebra
\begin{align*}
  \Lambda=A\times_K B_q
  &\cong K\langle t,X,Y\rangle/
  \bigl(t^4,X^2,Y^2,XY-qYX,tX,Xt,tY,Yt\bigr)
\end{align*}
satisfies GOP by corollary\ref{Nak1}; hence ARC holds for $\Lambda$.
\end{example}

\begin{remark}
The algebra in example\ref{eg} is not commutative, because $XY=qYX$ and $q\ne1$. Its quiver has one vertex and three loops, so it is not special biserial. Since
\[
  (\rad_\Lambda\Lambda)^3=Kt^3\ne0,
\]
it is not radical cube zero.

Let $N_i=A/(t^i)$ for $1\le i<4$. Then
\[
  N_3\cong\Omega_A(N_1)\mid\Omega_\Lambda(N_1),
  \qquad
  N_1\cong\Omega_A(N_3)\mid\Omega_\Lambda^2(N_1).
\]
Thus the nonzero module $N_1$ occurs repeatedly as a direct summand of its own syzygies, so $\pd_\Lambda N_1=\infty$ and $\text{gl.dim}\Lambda=\infty$.
\end{remark}

To prove that $\Lambda$ is not syzygy-finite, we use the following family of $B_q$-modules.

\begin{lemma}\label{Mr}
For $\lambda\in K^\times$, let $M_\lambda$ be the two-dimensional $B_q$-module with $K$-basis $u,v$ and action
\[
  Xu=Yu=0,
  \qquad
  Xv=u,
  \qquad
  Yv=\lambda u.
\]
Then:
\begin{enumerate}
  \item $M_\lambda\in\ind B_q$ and $M_\lambda$ is non-projective;
  \item $M_\lambda\cong M_\mu$ if and only if $\lambda=\mu$;
  \item
  \[
    \Omega_{B_q}(M_\lambda)\cong M_{(-q^{-1})\lambda},
    \qquad
    \Omega_{B_q}^n(M_\lambda)
      \cong M_{(-q^{-1})^n\lambda}.
  \]
\end{enumerate}
\end{lemma}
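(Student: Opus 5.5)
The plan is to work directly with the basis $\{1,X,Y,XY\}$ of $B_q$. Here $YX=q^{-1}XY$, and $XY$ spans the socle, since $X\cdot XY=0$ and $Y\cdot XY=qY\cdot YX=0$. So $B_q$ is local, four-dimensional, with $\rad B_q=\langle X,Y,XY\rangle$ and $\rad^2 B_q=K\,XY$.

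First, $M_\lambda$ is a $B_q$-module: we must check that $X^2$, $Y^2$ and $XY-qYX$ act by zero. This holds because any product of two generators sends $v$ to a multiple of $Xu$ or $Yu$, which is $0$.

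For (1), note that $M_\lambda$ has simple top $M_\lambda/\rad M_\lambda=M_\lambda/Ku$, since $Xv$ and $Yv$ span $Ku$. A module with simple top is indecomposable. It is nonprojective because the only indecomposable projective is $B_q$, which has dimension $4$.

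For (2), suppose $\phi:M_\lambda\to M_\mu$ is an isomorphism. It must preserve radicals, so $\phi(u)=cu$ and $\phi(v)=au+bv$ with $b\neq0$. Applying $X$ to $v$ gives $c=b$. Applying $Y$ to $v$ gives $c\lambda=b\mu$. Hence $\lambda=\mu$.

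For (3), the projective cover is $B_q\to M_\lambda$, $1\mapsto v$. Its kernel $\Omega(M_\lambda)$ is two-dimensional. We have $X\mapsto u$, $Y\mapsto\lambda u$, and $XY\mapsto X(\lambda u)=0$, so the kernel has basis $u'=XY$ and $v'=Y-\lambda X$. Now compute the action on $v'$. First, $Xv'=XY-\lambda X^2=XY=u'$. Second, $Yv'=Y^2-\lambda YX=-\lambda q^{-1}XY=(-q^{-1}\lambda)u'$. Also $Xu'=Yu'=0$. This gives $\Omega_{B_q}(M_\lambda)\cong M_{-q^{-1}\lambda}$, and the formula for $\Omega^n$ follows by induction. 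Since $-q^{-1}\lambda\neq0$, the parameter stays in $K^\times$.

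There is no real obstacle here. The only points needing care are the sign and the ordering convention in $YX=q^{-1}XY$, and confirming that $XY$ lies in the kernel, so that $B_q\to M_\lambda$ is the projective cover and the kernel is exactly two-dimensional.
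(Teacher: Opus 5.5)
Your proposal is correct and follows the paper's proof essentially step by step. It uses the same projective cover $1\mapsto v$, the same kernel basis $\{Y-\lambda X,\ XY\}$ (the paper's $u'=qYX$ equals your $XY$), and the same coefficient comparison for (2); the only cosmetic differences are that you get indecomposability from the simple top rather than the simple socle, and you find the kernel by solving the linear condition directly rather than by showing $B_q(Y-\lambda X)$ is two-dimensional.
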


\begin{proof}
one has $X^2u=X^2v=Y^2u=Y^2v=0$, and
\[
  (XY-qYX)u=0,
  \qquad
  (XY-qYX)v=0.
\]
Thus $M_\lambda$ is a $B_q$-module. Note also that
\[
  \rad B_q=(X,Y),
  \qquad
  B_q/(X,Y)\cong K.
\]

For part~(1), the socle is
\[
  \soc_{B_q}(M_\lambda)
  =\{m\in M_\lambda\mid Xm=Ym=0\}=Ku.
\]
Indeed, if $m=au+bv$ lies in the socle, then $Xm=bu=0$ and $Ym=\lambda bu=0$, hence $b=0$. A module with one-dimensional socle cannot decompose as a nontrivial direct sum, so $M_\lambda$ is indecomposable. It is non-projective by dimension.

For part~(2), let $M_\mu=Ku'\oplus Kv'$ with the analogous action, and let $f:M_\lambda\to M_\mu$ be an isomorphism. Since $f$ maps socle to socle,
\[
  (u)f=\alpha u'
  \qquad(0\ne\alpha\in K).
\]
Write $(v)f=\beta u'+\gamma v'$ with $\gamma\ne0$. Using $Xv=u$ gives
\[
  \gamma u'=X(\beta u'+\gamma v')=X((v)f)=(Xv)f=(u)f=\alpha u',
\]
so $\alpha=\gamma$. Using $Yv=\lambda u$ gives
\[
  \lambda\alpha u'
  =(\lambda u)f=(Yv)f=Y((v)f)=\gamma\mu u'.
\]
Thus $\lambda=\mu$. The converse is immediate.

For part~(3), the top of $M_\lambda$ is one-dimensional, so the map
\[
  h:B_q\twoheadrightarrow M_\lambda,
  \qquad
  (1)h=v,
\]
is a projective cover. Put $w_\lambda=Y-\lambda X$. Then $(w_\lambda)h=0$, so $B_qw_\lambda\subseteq\ker h$. With the basis $\{1,X,Y,XY\}$ of $B_q$, one computes
\[
  Xw_\lambda=XY=qYX,
  \qquad
  Yw_\lambda=-\lambda YX,
  \qquad
  XYw_\lambda=0.
\]
Hence $B_qw_\lambda$ has basis $\{w_\lambda,YX\}$ and dimension two. Since $\dim_K\ker h=2$, it follows that
\[
  \ker h=B_qw_\lambda=\Omega_{B_q}(M_\lambda).
\]
Set $u'=qYX$ and $v'=w_\lambda$. Then
\[
  Xu'=Yu'=0,
  \qquad
  Xv'=u',
  \qquad
  Yv'=(-q^{-1})\lambda u'.
\]
Therefore $\Omega_{B_q}(M_\lambda)\cong M_{(-q^{-1})\lambda}$, and iteration gives the last formula.
\end{proof}

\begin{remark}
The algebra $\Lambda$ in example\ref{eg} is of infinite representation type and is not syzygy-finite. Indeed, the hypothesis that $-q^{-1}$ has infinite order implies that $K$ is infinite, and every $B_q$-module may be regarded as a $\Lambda$-module. By lemma\ref{Mr}, the modules $M_\lambda$, $\lambda\in K^\times$, give infinitely many pairwise non-isomorphic indecomposable $\Lambda$-modules.

Moreover, lemma\ref{branchsyzygy} gives
\[
  \Omega_\Lambda(M_\lambda)
  \cong\Omega_{B_q}(M_\lambda)
  \oplus\rad_A\bigl(P_A(\top_{B_q}M_\lambda)\bigr).
\]
Thus $M_{(-q^{-1})\lambda}\mid\Omega_\Lambda(M_\lambda)$, and iteration shows that, for every fixed $n$, the class $\Omega_\Lambda^n(\Lambda-\mod)$ contains infinitely many pairwise non-isomorphic indecomposable direct summands. Hence $\Lambda$ is not syzygy-finite.
\end{remark}

\section{GOPization of an algebra}

For an arbitrary Artin algebra $A$, we now construct a related GOP algebra $\G(A)$ by using triangular matrix ring. We begin with the required preliminaries.

Let $A$ and $B$ be Artin $k$-algebras and let ${}_BM_A$ be a $B$--$A$-bimodule which is finitely generated over $k$, with $k$ acting centrally. Set
\[
  R=\begin{pmatrix}
      A&0\\ {}_BM_A&B
    \end{pmatrix}.
\]

\begin{remark}
The following description of left $R$-modules will be used throughout.
\begin{enumerate}
  \item A left $R$-module is equivalent to a triple
  \[
    L=(U,V,\varphi),
    \qquad
    U\in A-\mod,\quad V\in B-\mod,\quad
    \varphi:M\otimes_AU\longrightarrow V,
  \]
  where $\varphi$ is $B$-linear.

  \item As an abelian group, $L=U\oplus V$, with action
  \[
    \begin{pmatrix}a&0\\m&b\end{pmatrix}
    \binom{u}{v}
    =
    \binom{au}{(m\otimes u)\varphi+bv}.
  \]

  \item A morphism
  \[
    (f,g):(U,V,\varphi)\longrightarrow(U',V',\varphi')
  \]
  consists of an $A$-linear map $f:U\to U'$ and a $B$-linear map $g:V\to V'$ satisfying
  \[
    \varphi g=(1_M\otimes f)\varphi'.
  \]

  \item Direct sums are componentwise:
  \[
  (U,V,\varphi)\oplus(U',V',\varphi')
  =\bigl(U\oplus U',V\oplus V',\varphi\boxplus\varphi'\bigr),
  \]
  where
  \[
    \bigl(m\otimes(u,u')\bigr)(\varphi\boxplus\varphi')
    =\bigl((m\otimes u)\varphi,(m\otimes u')\varphi'\bigr).
  \]
  In particular,
  \[
    (U,V,0)\cong(U,0,0)\oplus(0,V,0).
  \]

  \item Every indecomposable projective left $R$-module is isomorphic to either
  \[
    (P,M\otimes_AP,1_{M\otimes_AP})
    \quad\text{or}\quad
    (0,Q,0),
  \]
  where $P$ is an indecomposable projective $A$-module and $Q$ is an indecomposable projective $B$-module.
\end{enumerate}
\end{remark}

The projective cover and syzygy of an arbitrary triple admit the following explicit descriptions. They are the left-module, lower-triangular versions of \cite[Theorem 5.1 and Section 1]{HV00}.

\begin{lemma}\label{normcover}
Let $L=(U,V,\varphi)$. Choose an $A$-projective cover
\[
  \pi:P\twoheadrightarrow U
\]
and a $B$-projective cover
\[
  \overline\rho:Q\twoheadrightarrow\Cok\varphi.
\]
Let $c:V\twoheadrightarrow\Cok\varphi$ be the quotient map, and choose a lift $\rho:Q\to V$ satisfying $\rho c=\overline\rho$. Define
\[
  h:(M\otimes_AP)\oplus Q\longrightarrow V
\]
by
\[
  (z,q)h=\bigl((z)(1_M\otimes\pi)\bigr)\varphi+(q)\rho.
\]
Then
\[
 (\pi,h):\bigl(P,(M\underset{A}{\otimes}P)\oplus Q,\begin{pmatrix}
 \Id_{M\underset{A}{\otimes}P} & 0
 \end{pmatrix}\bigr)\twoheadrightarrow(U,V,\varphi)
\]
is the projective cover of $L$ as an $R$-module.
\end{lemma}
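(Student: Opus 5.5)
The plan is to verify three things in turn: that $(\pi,h)$ is a morphism of triples, that it is surjective with projective domain, and that it is minimal, i.e.\ its kernel lies in the radical of the domain. Here the domain is $\bigl(P,(M\otimes_AP)\oplus Q,(\Id\ 0)\bigr)$.

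\emph{Morphism and projectivity.} The compatibility condition to check is $\varphi' g=(1_M\otimes f)\varphi$ in the paper's left-to-right convention, with $f=\pi$ and $g=h$. For $z\in M\otimes_AP$, the structure map sends $z\mapsto(z,0)$, and $(z,0)h=(z)(1_M\otimes\pi)\varphi$. This is exactly the required identity, and it is built into the definition of $h$. The domain decomposes as
\[
\bigl(P,M\otimes_AP,1\bigr)\oplus(0,Q,0),
\]
so it is projective by item (5) of the preceding remark.

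\emph{Surjectivity.} In the first component, $\pi$ is onto. In the second component, it suffices to show $\im h+\im\varphi=V$ together with $\im\varphi\subseteq\im h$. Since $1_M\otimes\pi$ is surjective (tensor is right exact), the first summand of $h$ has image exactly $\im\varphi$. The second summand $\rho$ maps onto $V$ modulo $\im\varphi$, because $\rho c=\overline\rho$ is surjective. Hence $h$ is onto.

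\emph{Minimality.} This is the main obstacle. I would use the description of the radical of $R$-modules: for a triple $(U',V',\varphi')$,
\[
\rad_R(U',V',\varphi')=\bigl(\rad_AU',\ \im\varphi'+\rad_BV'\bigr),
\]
because $\rad R=\begin{pmatrix}\rad A&0\\ M&\rad B\end{pmatrix}$. The equivalent criterion is that the induced map on tops is an isomorphism. The top of the domain is $(\top_AP,\top_BQ)$, since $\im(\Id\ 0)=M\otimes P$ is killed. The top of $L$ is $(\top_AU,\ V/(\im\varphi+\rad V))=(\top_AU,\top_B\Cok\varphi)$. On the first coordinate, the induced map is the isomorphism $\top P\cong\top U$, since $\pi$ is a projective cover. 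On the second coordinate, it is induced by $\rho$ composed with $c$, i.e.\ by $\overline\rho$, giving $\top Q\cong\top\Cok\varphi$, since $\overline\rho$ is a projective cover.

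Thus $(\pi,h)$ induces an isomorphism on tops, and a surjection from a projective module with this property is a projective cover. The care needed is in justifying the radical formula for triples; I would derive it directly from $\rad L=(\rad R)L$ using the action formula in item (2) of the remark.
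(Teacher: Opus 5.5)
Your proof is correct. It differs from the paper only in that the paper gives no argument for this lemma at all: it presents the statement as the left-module, lower-triangular version of \cite[Theorem 5.1 and Section 1]{HV00} and moves on.

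Your proposal is a self-contained verification within the paper's own conventions:
\begin{itemize}
\item Compatibility of $(\pi,h)$ with the structure maps holds by the definition of $h$.
\item The domain is projective because it splits as $(P,M\otimes_AP,1)\oplus(0,Q,0)$.
\item Surjectivity follows from right exactness of $M\otimes_A-$ together with $\rho c=\overline\rho$.
\item Minimality follows from the formula $\rad_R(U',V',\varphi')=(\rad_AU',\ \im\varphi'+\rad_BV')$. This identifies $\top_R L$ with $(\top_AU,\top_B\Cok\varphi)$, so the criterion ``$\top(\pi,h)$ is an isomorphism'' reduces to the two given projective covers.
\end{itemize}

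The one ingredient you assert rather than prove, $\rad R=\begin{pmatrix}\rad A&0\\ M&\rad B\end{pmatrix}$, is standard. That ideal is nilpotent (products of length at least the sum of the nilpotency indices of $\rad A$ and $\rad B$ vanish), and the quotient $A/\rad A\times B/\rad B$ is semisimple. The criterion you invoke at the end is also standard. For a surjection $f$ from a projective module, $\ker f\subseteq\rad P$ holds if and only if $\top f$ is an isomorphism, and $\rad P$ is superfluous.

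The paper's citation buys brevity. Your route avoids translating Haghany--Varadarajan's right-module, upper-triangular statements into the paper's left-module, right-written-maps setting. It also makes explicit that the result does not depend on the choice of the lift $\rho$, since only $\rho c=\overline\rho$ enters the computation on tops.
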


\begin{lemma}\label{normsyzygy}
Retain the notation of lemma\ref{normcover}. Let
\[
  i:\Omega_A(U)\hookrightarrow P,
  \qquad
  V_1=\Ker h.
\]
Then there is an isomorphism of left $R$-modules
\[
  \Omega_R(L)\cong\bigl(\Omega_A(U),V_1,\psi\bigr),
\]
where
\[
  \psi:M\otimes_A\Omega_A(U)\longrightarrow V_1,
  \qquad
  (w)\psi=\bigl((w)(1_M\otimes i),0\bigr).
\]
\end{lemma}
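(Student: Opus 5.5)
The plan is to compute the kernel of the projective cover $(\pi,h)$ of Lemma~\ref{normcover} componentwise and then match it with the triple $(\Omega_A(U),V_1,\psi)$. By the description of modules over $R$, a submodule of the triple
\[
  T=\bigl(P,(M\otimes_AP)\oplus Q,\begin{pmatrix}\Id & 0\end{pmatrix}\bigr)
\]
is a pair consisting of an $A$-submodule $U'\subseteq P$ and a $B$-submodule $V'\subseteq (M\otimes_AP)\oplus Q$. The pair must be compatible, in the sense that the structure map restricted to $M\otimes_A U'$ lands in $V'$. A morphism of triples $(f,g)$ has as kernel the pair $(\Ker f,\Ker g)$, with structure map induced from that of the source. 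Applied to $(\pi,h)$, this gives
\[
  \Omega_R(L)=\Ker(\pi,h)=(\Ker\pi,\Ker h,\psi')=(\Omega_A(U),V_1,\psi'),
\]
where $\psi'$ is the restriction of the structure map of $T$.

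The remaining step is to identify $\psi'$ with $\psi$. The structure map of $T$ is $M\otimes_AP\to (M\otimes_AP)\oplus Q$, $z\mapsto(z,0)$. Restricting to the submodule $\Omega_A(U)$ means precomposing with $1_M\otimes i:M\otimes_A\Omega_A(U)\to M\otimes_AP$, so that $(w)\psi'=((w)(1_M\otimes i),0)$.

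One must check that this element really lies in $V_1=\Ker h$. This holds because
\[
  \bigl((w)(1_M\otimes i),0\bigr)h=\bigl((w)(1_M\otimes i\pi)\bigr)\varphi=0,
\]
since $i\pi=0$. This is also precisely the compatibility condition for the kernel triple.

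The only subtle point is the kernel description in the first step. The functor $M\otimes_A-$ is not left exact, so one cannot say that the kernel triple has $V$-component $M\otimes_A\Omega_A(U)$. The correct statement is only that the structure map is the composite
\[
  M\otimes_A\Ker\pi\longrightarrow M\otimes_AP\longrightarrow(M\otimes_AP)\oplus Q,
\]
with the second component of the kernel being $\Ker h$ itself. This follows directly from the action formula
\[
  \begin{pmatrix}a&0\\m&b\end{pmatrix}\binom{u}{v}=\binom{au}{(m\otimes u)\varphi+bv}.
\]
Indeed, an element $(u,v)$ of $T$ lies in the $R$-kernel exactly when $(u)\pi=0$ and $(v)h=0$. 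The induced $R$-action on such pairs then yields the structure map $\psi$ as described.

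Finally, isomorphism classes are independent of the chosen lift $\rho$ and of the chosen covers, because projective covers are unique up to isomorphism. Hence the isomorphism $\Omega_R(L)\cong(\Omega_A(U),V_1,\psi)$ holds as stated.
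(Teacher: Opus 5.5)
Your proposal is correct: the kernel of the morphism of triples $(\pi,h)$ is $(\Ker\pi,\Ker h)$, and its structure map is the restriction of $\begin{pmatrix}\Id&0\end{pmatrix}$ along $1_M\otimes i$, which is exactly $\psi$. Your check that $\bigl((w)(1_M\otimes i),0\bigr)h=\bigl((w)(1_M\otimes i\pi)\bigr)\varphi=0$ confirms that this map lands in $V_1$. The paper gives no argument of its own for this lemma and only cites Haghany--Varadarajan (Theorem 5.1 and Section 1 there), so your componentwise kernel computation is the routine verification behind that citation, made self-contained.
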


The following two observations will be used repeatedly in our situation. The first compares projective dimensions; the second gives a precise syzygy formula.

\begin{lemma}\label{sp.pd.compare}
Assume that $B$ is semisimple. For every triple
$L=(U,V,\varphi)\in R-\mod$,
\[
  \pd_RL<\infty
  \quad\Longleftrightarrow\quad
  \pd_AU<\infty.
\]
More precisely,
\[
  \pd_AU\le\pd_RL\le\pd_AU+1.
\]
\end{lemma}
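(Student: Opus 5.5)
The plan is to use Lemma~\ref{normsyzygy} to compute $\Omega_R(L)$ explicitly and to iterate it. The key simplification is that when $B$ is semisimple, every $B$-module is projective. So the $B$-component of any triple never obstructs projectivity; only the $A$-component and the structure map matter.

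\emph{Step 1: projectivity of triples.} First I will characterize projective triples. By the description of indecomposable projectives, $L=(U,V,\varphi)$ is projective if and only if $U\in A-\proj$ and $\varphi$ is a split monomorphism. Since $B$ is semisimple, the last condition is simply that $\varphi$ is injective: then $V\cong (M\otimes_AU)\oplus\Cok\varphi$, and $(U,V,\varphi)\cong(U,M\otimes_AU,1)\oplus(0,\Cok\varphi,0)$. The converse holds because projectives are sums of the listed indecomposables, whose structure maps are injective.

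\emph{Step 2: iterate Lemma~\ref{normsyzygy}.} That lemma gives $\Omega_R(L)\cong(\Omega_A(U),V_1,\psi)$, where $\psi$ is $w\mapsto((w)(1_M\otimes i),0)$. The map $\psi$ factors through $1_M\otimes i\colon M\otimes_A\Omega_A(U)\to M\otimes_AP$. Inducting, I obtain
\[
\Omega_R^n(L)\cong(\Omega_A^n(U),V_n,\psi_n).
\]
For the lower bound, suppose $\Omega^n_R(L)$ is projective. Step~1 then forces $\Omega^n_A(U)$ to be projective, so $\pd_AU\le\pd_RL$.

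\emph{Step 3: upper bound.} Let $d=\pd_AU<\infty$. Then $\Omega_R^d(L)=(W,V_d,\psi_d)$ with $W$ projective over $A$, and $\Omega^{d+1}_A(U)=0$. Applying Lemma~\ref{normsyzygy} once more gives
\[
\Omega_R^{d+1}(L)\cong(0,V_{d+1},0),
\]
which is projective because $V_{d+1}$ is a module over the semisimple algebra $B$. Hence $\pd_RL\le d+1$. The case $d=0$ needs no separate treatment: $\Omega_R(L)$ is then $(0,V_1,0)$ directly.

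\emph{Main obstacle.} The delicate point is Step~1, and in particular its use in the lower bound. One must make sure that a projective triple really has a projective first component. This follows because the first component of each listed indecomposable projective is either $P$ or $0$, and direct sums are computed componentwise. Once that is in place, the rest is bookkeeping with Lemma~\ref{normsyzygy}. The equivalence of finiteness of the two projective dimensions then follows immediately from the two bounds.
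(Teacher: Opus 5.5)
Your proof is correct, but for the upper bound it takes a different route from the paper. The lower bound is also argued slightly differently.

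\textbf{How the paper argues.} The paper never iterates Lemma~\ref{normsyzygy} here. For the lower bound, it applies the exact first-component functor to a finite projective resolution of $L$. This uses only that each projective triple has a projective first component. For the upper bound, it argues in three steps:
\begin{enumerate}
\item It uses the canonical sequence $0\to(0,V,0)\to L\to(U,0,0)\to0$, where $(0,V,0)$ is projective, so $\pd_RL\le\pd_R(U,0,0)$.
\item The sequence $0\to(0,M\otimes_AP,0)\to(P,M\otimes_AP,1)\to(P,0,0)\to0$ shows $\pd_R(P,0,0)\le1$ for every $P\in A-\proj$.
\item It applies the standard projective-dimension inequalities along $0\to(P_d,0,0)\to\cdots\to(P_0,0,0)\to(U,0,0)\to0$, which gives $\pd_R(U,0,0)\le d+1$.
\end{enumerate}
That argument is self-contained and needs no projective covers. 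It also does not depend on the explicit syzygy description in Lemma~\ref{normsyzygy}, which the paper cites from Haghany--Varadarajan without proof.

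\textbf{What your route gives.} You rely on that cited lemma. In exchange you get more information: the explicit shape $\Omega_R^n(L)\cong(\Omega_A^n(U),V_n,\psi_n)$ of every minimal syzygy. You also see that $\Omega_R^{d+1}(L)$ is already of the form $(0,V_{d+1},0)$. This is the same kind of computation the paper carries out later in Lemma~\ref{sp.syzygy}.

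\textbf{One remark on Step~1.} Your full characterization of projective triples is more than you need. Only the implication ``projective $\Rightarrow$ projective first component'' is actually used, just as in the paper.
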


\begin{proof}
If $\pd_RL=n<\infty$, take a finite projective resolution of $L$. Every projective $R$-module occurring in it is of the form
\[
  (P,M\otimes_AP,1)\oplus(0,Q,0),
  \qquad P\in A-\proj,\quad Q\in B-\proj.
\]
Taking first components therefore gives a finite $A$-projective resolution of $U$, so
\[
  \pd_AU\le\pd_RL.
\]

On the other hand, use the canonical short exact sequence
\[
0\longrightarrow(0,V,0)
 \xrightarrow{(0,1_V)}L
 \xrightarrow{(1_U,0)}(U,0,0)\longrightarrow 0.
\]
Because $B$ is semisimple, $(0,V,0)$ is projective. Hence
\[
  \pd_RL\le\pd_R(U,0,0).
\]
For each $P\in A-\proj$ there is a short exact sequence
\[
0\longrightarrow(0,M\otimes_AP,0)
 \longrightarrow(P,M\otimes_AP,1)
 \longrightarrow(P,0,0)\longrightarrow 0.
\]
The first two terms are projective $R$-modules, so
$\pd_R(P,0,0)\le1$.

If $d=\pd_AU<\infty$, choose a resolution
\[
0\longrightarrow P_d\longrightarrow P_{d-1}\longrightarrow\cdots
 \longrightarrow P_0\longrightarrow U\longrightarrow 0.
\]
It gives an exact sequence of $R$-modules
\[
0\longrightarrow(P_d,0,0)\longrightarrow\cdots
 \longrightarrow(P_0,0,0)\longrightarrow(U,0,0)\longrightarrow 0.
\]
Since every $(P_i,0,0)$ has $R$-projective dimension at most one, repeated use of the projective-dimension inequalities in short exact sequences yields
\[
  \pd_R(U,0,0)\le d+1.
\]
Thus $\pd_RL\le\pd_AU+1$.

If $\pd_AU=\infty$, the already proved lower bound forces $\pd_RL=\infty$, so the extended inequality is valid in all cases.
\end{proof}

\begin{lemma}\label{sp.syzygy}
Assume that $B$ is semisimple and that $M$, as a right $A$-module, is semisimple. Let $L=(U,V,\varphi)$, and for $j\ge0$ let
\[
  \pi_j:P_j\twoheadrightarrow\Omega_A^j(U)
\]
be the projective cover. Then there exists a $B$-module $V_1$ such that
\[
  \Omega_R(L)\cong
  \bigl(\Omega_A(U),0,0\bigr)\oplus(0,V_1,0),
\]
and, for every $m\ge2$,
\[
  \Omega_R^m(L)\cong
  \bigl(\Omega_A^m(U),0,0\bigr)
  \oplus(0,M\otimes_AP_{m-1},0).
\]
\end{lemma}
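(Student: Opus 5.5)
The plan is to compute syzygies with Lemma~\ref{normsyzygy} and use semisimplicity of $B$ and of $M_A$ to identify every term. First I apply Lemma~\ref{normsyzygy} to $L=(U,V,\varphi)$. This gives
\[
\Omega_R(L)\cong(\Omega_A(U),V_1,\psi),
\qquad
V_1=\Ker h\subseteq(M\otimes_AP_0)\oplus Q,
\]
with $\psi$ induced by $1_M\otimes i$, where $i:\Omega_A(U)\hookrightarrow P_0$.

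The key observation is that $\psi=0$. Since $P_0\to U$ is a projective cover, $\Omega_A(U)\subseteq\rad P_0=(\rad A)P_0$. Because $M_A$ is semisimple, $M\cdot\rad A=0$. Hence every generator $m\otimes x$ of the image of $M\otimes_A\Omega_A(U)$ in $M\otimes_AP_0$, with $x=\sum r_jp_j$ and $r_j\in\rad A$, satisfies
\[
m\otimes x=\sum mr_j\otimes p_j=0.
\]
So $1_M\otimes i=0$, and therefore $\psi=0$. By the remark on direct sums of triples, $\Omega_R(L)\cong(\Omega_A(U),0,0)\oplus(0,V_1,0)$. This is the first assertion.

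For $m\ge2$, I compute syzygies of the two summands separately. Since $B$ is semisimple, $(0,V_1,0)$ is projective and contributes nothing. It remains to show that for any $A$-module $W$,
\[
\Omega_R(W,0,0)\cong(\Omega_A(W),0,0)\oplus(0,M\otimes_AP_W,0),
\]
where $P_W$ is the $A$-projective cover of $W$. I use Lemma~\ref{normcover} for this. Here $\Cok\varphi=0$, so $Q=0$ and the cover is $(P_W,M\otimes_AP_W,\mathrm{Id})$ with $h=0$. Thus $V_1=M\otimes_AP_W$, and the same vanishing argument makes $\psi=0$. Applying this with $W=\Omega_A^{j}(U)$ for $j=1,\dots,m-1$ inductively, the summand $(0,M\otimes_AP_{m-2},0)$ arising at the previous step is projective, so it disappears under $\Omega_R$. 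Only the newly created summand $(0,M\otimes_AP_{m-1},0)$ survives, with $P_{m-1}$ the projective cover of $\Omega_A^{m-1}(U)$. This yields the stated formula for $\Omega_R^m(L)$.

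The main obstacle is the vanishing $1_M\otimes i=0$. It needs care because $M\otimes_A-$ is not left exact, so the map $M\otimes_A\Omega_A(U)\to M\otimes_AP_0$ must be shown to be zero directly, via $M\rad A=0$. Everything else is bookkeeping with Lemmas~\ref{normcover} and~\ref{normsyzygy}. One also has to check that syzygies are taken with projective covers, so that projective summands genuinely vanish. This is guaranteed because Lemma~\ref{normcover} produces the projective cover.
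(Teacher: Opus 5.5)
Your proposal is correct and follows essentially the same route as the paper's proof. You show $1_M\otimes i_0=0$ from $\Omega_A(U)\subseteq(\rad A)P_0$ and $M\rad A=0$, so that $\psi=0$ in Lemma~\ref{normsyzygy}. You then split off the projective summand $(0,V_1,0)$, compute $\Omega_R(X,0,0)\cong(\Omega_A(X),0,0)\oplus(0,M\otimes_AP,0)$ from the cover $(P,M\otimes_AP,1)$ given by Lemma~\ref{normcover}, and iterate with $X=\Omega_A^{j}(U)$.
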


\begin{proof}
Let $i_0:\Omega_A(U)\hookrightarrow P_0$ be the kernel inclusion. Since $\pi_0$ is a projective cover,
\[
  (\Omega_AU)i_0\subseteq\rad_A P_0=(\rad A)P_0.
\]
For $m\in M$ and $u\in\Omega_AU$, write $(u)i_0=\sum_j a_jp_j$ with $a_j\in\rad A$. We know that
\[
  m\otimes (u)i_0
  =\sum_jm\otimes a_jp_j
  =\sum_jma_j\otimes p_j=0,
\]
because $M\rad A=0$. Hence $1_M\otimes i_0=0$. In lemma\ref{normsyzygy}, the structure map $\psi$ is therefore zero, and
\[
  \Omega_R(L)\cong
  \bigl(\Omega_A(U),V_1,0\bigr)
  \cong\bigl(\Omega_A(U),0,0\bigr)\oplus(0,V_1,0).
\]
The second summand is projective because $B$ is semisimple.

For later use, if $X\in A-\mod$ and $\pi:P\twoheadrightarrow X$ is its projective cover, then lemma\ref{normcover} gives the projective cover
\[
  (\pi,0):(P,M\otimes_AP,1)\twoheadrightarrow(X,0,0).
\]
Its kernel has zero structure map, so
\[
  \Omega_R(X,0,0)
  \cong\bigl(\Omega_A(X),0,0\bigr)
       \oplus(0,M\otimes_AP,0).
\]
Starting from the first syzygy of $L$, the projective summand $(0,V_1,0)$ disappears after one further syzygy. Applying the last formula to $X=\Omega_A^j(U)$ and proceeding by induction gives, for $m\ge2$,
\[
  \Omega_R^m(L)\cong
  \bigl(\Omega_A^m(U),0,0\bigr)
  \oplus(0,M\otimes_AP_{m-1},0).
\]
\end{proof}

We now define the GOPization of $A$.

\begin{definition}
Let $A$ be an Artin algebra and set
\[
  \Delta=A/\rad A.
\]
Regard ${}_\Delta\Delta_A$ as a bimodule, with right $A$-action induced by the quotient $A\twoheadrightarrow\Delta$, and define
\[
  C_A=
  \begin{pmatrix}
    A&0\\ {}_\Delta\Delta_A&\Delta
  \end{pmatrix}.
\]
The projection
\[
  \varepsilon:C_A\longrightarrow\Delta,
  \qquad
  \begin{pmatrix}a&0\\m&d\end{pmatrix}\longmapsto d,
\]
is a surjective algebra homomorphism. Give ${}_\Delta\Delta_{C_A}$ the right $C_A$-action induced by $\varepsilon$, and define
\[
  \G(A)=
  \begin{pmatrix}
    C_A&0\\ {}_\Delta\Delta_{C_A}&\Delta
  \end{pmatrix}.
\]
We call $\G(A)$ the \emph{GOPization} of $A$.
\end{definition}

\begin{proposition}\label{GOPization}
For every Artin algebra $A$, the algebra $\G(A)$ satisfies GOP.
\end{proposition}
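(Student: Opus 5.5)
The plan is to apply Lemma~\ref{sp.syzygy} twice: first to $C_A$ over $A$, then to $\G(A)$ over $C_A$. Doing so produces, in every sufficiently high syzygy of a module of infinite projective dimension, a summand built from a simple $\Delta$-module. Since there are only finitely many simples, the pigeonhole principle then gives a repeated summand.

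\textbf{Step 1: Lemma~\ref{sp.syzygy} applies to both triangular extensions.} For $C_A$, the lower-right algebra $\Delta$ is semisimple. The bimodule ${}_\Delta\Delta_A$ is semisimple as a right $A$-module, since $\rad A$ annihilates it. For $\G(A)$, the lower-right algebra is again $\Delta$. The right $C_A$-action on $\Delta$ is through $\varepsilon$, and
\[
  \rad C_A=\begin{pmatrix}\rad A&0\\ \Delta&0\end{pmatrix}\subseteq\ker\varepsilon .
\]
Hence $\Delta_{C_A}$ is annihilated by $\rad C_A$, so it is semisimple as a right $C_A$-module. This verification is the one place where the specific choice of $\varepsilon$ matters.

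\textbf{Step 2: reduction to infinite $A$-projective dimension.} Let $L'=(L,V',\varphi')\in\G(A)-\mod$ with $\pd_{\G(A)}L'=\infty$, and write $L=(U,V,\varphi)\in C_A-\mod$. Applying Lemma~\ref{sp.pd.compare} twice gives $\pd_{C_A}L=\infty$ and then $\pd_AU=\infty$. Consequently every term $P_j$ of the minimal $A$-projective resolution of $U$ is nonzero.

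\textbf{Step 3: explicit syzygy formula.} Lemma~\ref{sp.syzygy} for $C_A$ gives, for $m\ge2$,
\[
  \Omega^m_{C_A}(L)\cong(\Omega^m_AU,0,0)\oplus(0,\Delta\otimes_AP_{m-1},0).
\]
Here $\Delta\otimes_AP_{m-1}\cong\top_A P_{m-1}\neq0$. Lemma~\ref{sp.syzygy} for $\G(A)$ then gives, for $m\ge2$,
\[
  \Omega^m_{\G(A)}(L')\cong\bigl(\Omega^m_{C_A}(L),0,0\bigr)\oplus(\text{a $\Delta$-part}).
\]
Therefore, for every simple summand $S$ of $\top_AP_{m-1}$, the module $O_S:=\bigl((0,S,0),0,0\bigr)$ is a direct summand of $\Omega^m_{\G(A)}(L')$.

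\textbf{Step 4: $O_S$ is nonzero, indecomposable and nonprojective.} Indecomposability is clear, because $S$ is simple. For nonprojectivity, we use the description of indecomposable projective $\G(A)$-modules. They are $(Q,\Delta\otimes_{C_A}Q,1)$ with $Q$ an indecomposable projective $C_A$-module, and $(0,S',0)$ with $S'$ a simple $\Delta$-module. The module $O_S$ is not of the second form, since its $C_A$-component is nonzero. It is not of the first form either. Indeed, $Q=(0,S,0)$ is $C_A$-projective, and $\Delta\otimes_{C_A}(0,S,0)\cong S\neq0$ because $\varepsilon$ acts on the $\Delta$-component faithfully. So the projective $\G(A)$-module with first component $(0,S,0)$ has nonzero second component, whereas the second component of $O_S$ is $0$.

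\textbf{Step 5: pigeonhole.} There are only finitely many simple $\Delta$-modules, and $\top_AP_{m-1}\ne0$ for every $m\ge2$. Hence some fixed simple $S$ divides $\top_AP_{m-1}$ for two indices $2\le n<n+p$. Then $O_S\mid\Omega^n_{\G(A)}(L')$ and $O_S\mid\Omega^{n+p}_{\G(A)}(L')$, which is exactly GOP for $L'$.
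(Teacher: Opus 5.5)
Your proposal is correct and follows essentially the same route as the paper: Lemma~\ref{sp.syzygy} is applied to both triangular extensions, the summands $((0,S,0),0,0)$ come from $\Delta\otimes_A P_{m-1}\neq 0$, and the pigeonhole principle runs over the finitely many simple $\Delta$-modules. The one small variation is how you show $O_S$ is nonprojective: you compare it with the list of indecomposable projective triples, whereas the paper computes its projective cover and finds that the kernel $(0,S,0)$ is nonzero. Both arguments hinge on $\Delta\otimes_{C_A}(0,S,0)\cong S$. This holds because $C_A$ acts on both $\Delta_{C_A}$ and $(0,S,0)$ through the surjection $\varepsilon$, not because $\varepsilon$ acts ``faithfully''.
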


\begin{proof}

We have
\[
  \Delta\rad A=0,
  \qquad
  \Delta\rad C_A=0,
\]
and $\Delta$ is semisimple, so lemma\ref{sp.pd.compare} and lemma\ref{sp.syzygy} are available.

Let $E_1,\dots,E_s$ be the isomorphism classes of indecomposable left $\Delta$-modules. Since $\Delta$ is semisimple, each $E_i$ is simple and projective. Define
\[
  Q_i=(0,E_i,0)\in C_A-\mod.
\]
Then $Q_i$ is an indecomposable projective $C_A$-module. Finally, put
\[
  O_i=(Q_i,0,0)\in\G(A)-\mod.
\]

We first prove that every $O_i$ is indecomposable and non-projective. A $\G(A)$-endomorphism of $O_i$ is necessarily of the form $(s,0)$ with $s\in\End_{C_A}(Q_i)$, and therefore
\[
  \End_{\G(A)}(O_i)
  \cong\End_{C_A}(Q_i)
  \cong\End_\Delta(E_i)
\]
as rings. The last ring is a division ring, so $O_i$ is indecomposable.

To prove every $O_i$ is non-projective, we may firstly compute the left $\Delta$-module $\Delta\otimes_{C_A}Q_i$. More generally, for $V\in\Delta-\mod$, define
\[
  \Theta_V:\Delta\otimes_{C_A}(0,V,0)\longrightarrow V,
  \qquad
  u\otimes\binom0x\longmapsto ux.
\]
This is a homomorphism of left $\Delta$-modules. To check balancing, take $c=\begin{pmatrix}a&0\\m&d\end{pmatrix}\in C_A$. The right action on the first factor is $u\cdot c=ud$, while $c\binom0x=\binom0{dx}$. Hence
\[
  \bigl((u\cdot c)\otimes\tbinom0x\bigr)\Theta_V
  =(ud)x
  =u(dx)
  =\bigl(u\otimes c\tbinom0x\bigr)\Theta_V.
\]
Define
\[
  \Upsilon_V:V\longrightarrow\Delta\otimes_{C_A}(0,V,0),
  \qquad
  x\longmapsto1_\Delta\otimes\binom0x.
\]
If $d\in\Delta$ and $\iota(d)=\begin{pmatrix}0&0\\0&d\end{pmatrix}\in C_A$, then
\begin{align*}
  (dx)\Upsilon_V
  &=1_\Delta\otimes\binom0{dx}
   =1_\Delta\otimes\iota(d)\binom0x\\
  &=(1_\Delta\cdot\iota(d))\otimes\binom0x
   =d\otimes\binom0x
   =d\bigl(1_\Delta\otimes\binom0x\bigr).
\end{align*}
Thus $\Upsilon_V$ is $\Delta$-linear. It is clear that
\[
  \Upsilon_V\Theta_V=1_V,
  \qquad
  \Theta_V\Upsilon_V=1_{\Delta\otimes_{C_A}(0,V,0)}.
\]
Therefore
\[
  \Delta\otimes_{C_A}(0,V,0)\cong V
\]
as left $\Delta$-modules. And typically,
\[
  \Delta\otimes_{C_A}Q_i\cong E_i
\]
as left $\Delta$-modules.

The canonical exact sequence
\[
0\longrightarrow(0,\Delta\otimes_{C_A}Q_i,0)
 \longrightarrow(Q_i,\Delta\otimes_{C_A}Q_i,1)
 \xrightarrow{(1_{Q_i},0)}(Q_i,0,0)
 \longrightarrow 0
\]
is the projective-cover sequence of $O_i$. Its kernel is isomorphic, as a left $\G(A)$-module, to $(0,E_i,0)$, which is nonzero and projective. Consequently,
\[
  \pd_{\G(A)}O_i=1,
\]
so $O_i$ is nonprojective.

It remains to prove GOP. Let
\[
  L=(Z,V,\psi)\in\G(A)-\mod,
  \qquad
  Z=(X,Y,\varphi)\in C_A-\mod,
\]
and suppose that $\pd_{\G(A)}L=\infty$. By lemma\ref{sp.pd.compare},
\[
  \pd_{\G(A)}L=\infty
  \quad\Longleftrightarrow\quad
  \pd_{C_A}Z=\infty
  \quad\Longleftrightarrow\quad
  \pd_AX=\infty.
\]
Let
\[
  \pi_j:P_j\twoheadrightarrow\Omega_A^j(X)
  \qquad(j\ge0)
\]
be projective covers. Since $\pd_AX=\infty$, every $P_j$ is nonzero. By lemma\ref{sp.syzygy}, for $m\ge2$,
\[
  \Omega_{C_A}^m(Z)
  \cong\bigl(\Omega_A^m(X),0,0\bigr)
  \oplus(0,\Delta\otimes_AP_{m-1},0).
\]
As a left $\Delta$-module,
\[
  \Delta\otimes_AP_{m-1}
  \cong P_{m-1}/(\rad A)P_{m-1}\ne0
\]
by Nakayama's lemma. Hence
\[
  \Delta\otimes_AP_{m-1}
  \cong\bigoplus_{i=1}^sE_i^{a_{i,m}}
\]
for some $a_{i,m}\ge0$, not all zero. Thus, for every $m\ge2$, at least one $Q_i$ is a direct summand of $\Omega_{C_A}^m(Z)$. Since there are only finitely many $Q_i$, the pigeonhole principle gives an index $i$ and integers $2\le a<b$ such that
\[
  Q_i\mid\Omega_{C_A}^a(Z),
  \qquad
  Q_i\mid\Omega_{C_A}^b(Z).
\]

Let $P'_j\twoheadrightarrow\Omega_{C_A}^j(Z)$ be projective covers. Applying lemma\ref{sp.syzygy} to the second triangular extension gives
\[
  \Omega_{\G(A)}^n(L)
  \cong\bigl(\Omega_{C_A}^n(Z),0,0\bigr)
  \oplus(0,\Delta\otimes_{C_A}P'_{n-1},0)
  \qquad(n\ge2).
\]
Taking $n=a$ and $n=b$, we obtain
\[
  O_i\mid\Omega_{\G(A)}^a(L),
  \qquad
  O_i\mid\Omega_{\G(A)}^b(L).
\]
The module $O_i$ is indecomposable and non-projective, so $\G(A)$ satisfies GOP.
\end{proof}

We have the following observations.

\begin{lemma}\label{GOP&fdm+1}
For every Artin algebra $A$,
\[
\text{fin.dim}C_A=\text{fin.dim}A+1.
\]
\end{lemma}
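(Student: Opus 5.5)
The plan is to prove the two inequalities $\text{fin.dim}C_A\le\text{fin.dim}A+1$ and $\text{fin.dim}C_A\ge\text{fin.dim}A+1$ separately. Both come from Lemma~\ref{sp.pd.compare} and Lemma~\ref{sp.syzygy}, applied with $B=\Delta$ semisimple and $M={}_\Delta\Delta_A$, which is semisimple on the right since $\Delta\rad A=0$. If $\text{fin.dim}A=\infty$, both sides are infinite. The upper half of Lemma~\ref{sp.pd.compare} then gives nothing, but its lower half gives $\pd_{C_A}(U,0,0)\ge\pd_AU$, and $\pd_{C_A}(U,0,0)$ is finite when $\pd_AU$ is. So $\text{fin.dim}C_A=\infty$. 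We may therefore assume $d=\text{fin.dim}A<\infty$.

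\textbf{Upper bound.} Let $L=(U,V,\varphi)\in C_A-\mod$ with $\pd_{C_A}L<\infty$. By Lemma~\ref{sp.pd.compare}, $\pd_AU<\infty$, so $\pd_AU\le d$. The same lemma then gives $\pd_{C_A}L\le\pd_AU+1\le d+1$. This direction is immediate.

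\textbf{Lower bound.} Choose $U\in A-\mod$ with $\pd_AU=d$, and take $L=(U,0,0)$. I will show $\pd_{C_A}L=d+1$. If $d=0$, then $U$ is a nonzero projective. The sequence $0\to(0,\Delta\otimes_AU,0)\to(U,\Delta\otimes_AU,1)\to(U,0,0)\to0$ is the projective cover sequence, and its kernel is nonzero by Nakayama, so $\pd=1$.

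For $d\ge1$, use the formula from the proof of Lemma~\ref{sp.syzygy}:
\[
\Omega_{C_A}^m(L)\cong\bigl(\Omega_A^m(U),0,0\bigr)\oplus(0,\Delta\otimes_AP_{m-1},0),\qquad m\ge1.
\]
For $m=1$ the projective cover of $(U,0,0)$ is $(P_0,\Delta\otimes P_0,1)$, so the formula holds there as well. Take $m=d$. Then $\Omega_A^d(U)=P_d$ is a nonzero projective. Hence $\Omega_{C_A}^{d}(L)$ contains the summand $(P_d,0,0)$, which is not projective over $C_A$: its projective cover has nonzero kernel $(0,\Delta\otimes_AP_d,0)$, and $\Delta\otimes_AP_d\ne0$ by Nakayama. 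Therefore $\pd_{C_A}L>d$, and combined with the upper bound, $\pd_{C_A}L=d+1$. This proves $\text{fin.dim}C_A\ge d+1$.

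\textbf{Main obstacle.} The delicate point is the minimality of the projective covers, which is needed to read off projective dimension from syzygies. Concretely, one must check that $(P,\Delta\otimes_AP,1)\to(P,0,0)$ is a projective cover and that the summand $(P_d,0,0)$ is genuinely nonprojective. Both follow from Lemma~\ref{normcover} with $\Cok\varphi=0$, together with the indecomposable projective classification in the remark on triples: every projective has the form $(P,\Delta\otimes P,1)\oplus(0,Q,0)$, and for $(P_d,0,0)$ to be of that form would require $\Delta\otimes_AP_d=0$, which Nakayama rules out.
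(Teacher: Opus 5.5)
Your proof is correct and follows essentially the same route as the paper. The lower bound comes from computing $\pd_{C_A}(U,0,0)=\pd_AU+1$ via Lemma~\ref{sp.syzygy} and Nakayama's lemma applied to $\Delta\otimes_AP_d$, and the upper bound comes from Lemma~\ref{sp.pd.compare}, which the paper re-derives through the canonical sequence $0\to(0,Y,0)\to Z\to(X,0,0)\to0$; your separate treatment of $\text{fin.dim}A=\infty$ and $d=0$ only makes explicit cases the paper handles implicitly.
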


\begin{proof}
We first prove the following statement: if $0\ne X\in A-\mod$ and $\pd_AX=d<\infty$, then
\[
  \pd_{C_A}(X,0,0)=d+1.
\]
Set $P'_d=\Omega_A^d(X)$. Then $0\ne P'_d\in A-\proj$. By lemma\ref{sp.syzygy},
\[
  \Omega_{C_A}^d(X,0,0)
  \cong(P'_d,0,0)\oplus P
\]
for some projective $C_A$-module $P$, and
\[
  \Omega_{C_A}(P'_d,0,0)
  \cong(0,\Delta\otimes_AP'_d,0).
\]
The last module is projective over $C_A$. It is nonzero because
\[
  \Delta\otimes_AP'_d
  \cong P'_d/(\rad A)P'_d\ne0
\]
by Nakayama's lemma. Hence $\pd_{C_A}(X,0,0)=d+1$.

Now let $Z=(X,Y,f)\in C_A-\mod$. There is a canonical exact sequence
\[
0\longrightarrow(0,Y,0)
 \xrightarrow{(0,1_Y)}Z
 \xrightarrow{(1_X,0)}(X,0,0)\longrightarrow 0.
\]
The first term is projective. If $\pd_{C_A}Z<\infty$, then lemma\ref{sp.pd.compare} gives $\pd_AX<\infty$, and therefore
\[
  \pd_{C_A}Z
  \le\pd_{C_A}(X,0,0)
  =\pd_AX+1.
\]
This gives $\text{fin.dim} C_A\le\text{fin.dim} A+1$. The modules $(X,0,0)$ constructed above give the reverse inequality.
\end{proof}

Furthermore, we have the following observation. It shows that the finiteness of finitistic dimension is preserved under GOPization.

\begin{theorem}\label{GOP&fdm+2}
For every Artin algebra $A$,
\[
\text{fin.dim}\mathcal{G}(A)=\text{fin.dim}A+2.
\]
\end{theorem}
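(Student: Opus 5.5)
The plan is to apply Lemma \ref{GOP&fdm+1} twice. First I will check that the pair $(C_A,\varepsilon)$ has the same features as $(A,\ A\twoheadrightarrow\Delta)$ that the proof of that lemma used. Concretely, $\G(A)$ is the triangular extension of $C_A$ by the semisimple algebra $\Delta$. Its bimodule ${}_\Delta\Delta_{C_A}$ is semisimple on the right, since $\Delta\rad C_A=0$ as recorded in the proof of Proposition \ref{GOPization}. So Lemmas \ref{sp.pd.compare} and \ref{sp.syzygy} apply with $(A,B,M)$ replaced by $(C_A,\Delta,{}_\Delta\Delta_{C_A})$.

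The one ingredient that is not formal is the Nakayama step $\Delta\otimes_{C_A}P'\neq0$ for every nonzero projective $C_A$-module $P'$. In Lemma \ref{GOP&fdm+1} this came from $\Delta\otimes_A P\cong P/(\rad A)P$. Here $\Delta$ is only a proper quotient of $C_A/\rad C_A\cong\Delta\times\Delta$, so this can fail, and this is the step I expect to be the main obstacle. I will compute on indecomposable projectives. For $V\in\Delta-\mod$, the proof of Proposition \ref{GOPization} gives $\Delta\otimes_{C_A}(0,V,0)\cong V\neq0$. For $(P,\Delta\otimes_AP,1)$ with $P\in A-\proj$, I expect $\Delta\otimes_{C_A}(P,\Delta\otimes_AP,1)\cong\Delta\otimes_AP\neq0$. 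To see this, note that $\varepsilon$ kills the $A$-corner, so tensoring only sees the second component. I will verify this with maps analogous to $\Theta_V$ and $\Upsilon_V$.

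With this, the argument of Lemma \ref{GOP&fdm+1} transfers. Let $0\neq Z\in C_A-\mod$ with $\pd_{C_A}Z=d<\infty$. The module $\Omega^d_{C_A}(Z,0,0)$ has a projective summand coming from $\Omega^d_{C_A}(Z)$ that is nonzero, and its syzygy over $\G(A)$ is $(0,\Delta\otimes_{C_A}P'_d,0)\neq0$. Hence $\pd_{\G(A)}(Z,0,0)=d+1$. I must be careful here: Lemma \ref{sp.syzygy} describes $\Omega^d_{\G(A)}(Z,0,0)$ as $(\Omega^d_{C_A}Z,0,0)$ plus projectives. It then suffices that $(P',0,0)$ for $0\neq P'\in C_A-\proj$ is nonprojective with first syzygy $(0,\Delta\otimes_{C_A}P',0)$, and this uses the Nakayama step above.

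For the upper bound, take $L=(Z,V,\psi)$ with $\pd_{\G(A)}L<\infty$. The canonical sequence $0\to(0,V,0)\to L\to(Z,0,0)\to0$ has projective left term. Together with Lemma \ref{sp.pd.compare}, this gives $\pd_{\G(A)}L\le\pd_{C_A}Z+1$. Hence $\text{fin.dim}\G(A)=\text{fin.dim}C_A+1$, with both sides possibly infinite. Combining this with Lemma \ref{GOP&fdm+1} yields $\text{fin.dim}\G(A)=\text{fin.dim}A+2$.
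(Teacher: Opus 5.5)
Your upper bound is fine. The step you flagged as the main obstacle, however, is resolved incorrectly, and the general claim built on it is false. The right $C_A$-module $\Delta$ is $C_A/\ker\varepsilon$, and $\ker\varepsilon$ contains the $A$-corner and also the off-diagonal corner ${}_\Delta\Delta_A$. Hence $\Delta\otimes_{C_A}L\cong L/(\ker\varepsilon)L$. For $L=(U,V,\varphi)$ one gets $(\ker\varepsilon)L=U\oplus\im\varphi$, so $\Delta\otimes_{C_A}(U,V,\varphi)\cong\Cok\varphi$.

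Concretely, in $(P,\Delta\otimes_AP,1)$ one has $\binom{p}{0}=\begin{pmatrix}1&0\\0&0\end{pmatrix}\binom{p}{0}$ and $\binom{0}{m\otimes p}=\begin{pmatrix}0&0\\m&0\end{pmatrix}\binom{p}{0}$. Since $\varepsilon$ kills both matrices, $\Delta\otimes_{C_A}(P,\Delta\otimes_AP,1)=0$, not $\Delta\otimes_AP$. Take the projective $C_A$-module $P'=(A,\Delta,1)$. The $\G(A)$-module $(P',0,0)=(P',\Delta\otimes_{C_A}P',1)$ is then itself projective, being a direct summand of $\G(A)$. So your transferred statement $\pd_{\G(A)}(Z,0,0)=\pd_{C_A}Z+1$ already fails for $Z=P'$. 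The claimed non-vanishing of $\Delta\otimes_{C_A}P'$ for every nonzero projective $P'$ is false.

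What is true, and suffices, is the case $d=\pd_{C_A}Z\ge1$.
\begin{itemize}
\item $\Omega^d_{C_A}(Z)$ is a syzygy, so by the proof of Lemma \ref{sp.syzygy} its structure map is zero.
\item A projective $(P,\Delta\otimes_AP,1)\oplus(0,Q,0)$ has zero structure map only if $\Delta\otimes_AP=0$, i.e.\ $P=0$.
\item Hence $\Omega^d_{C_A}(Z)\cong(0,V,0)$ with $V\ne0$.
\item The $\Theta_V,\Upsilon_V$ computation gives $\Delta\otimes_{C_A}(0,V,0)\cong V\ne0$, and therefore $\pd_{\G(A)}(Z,0,0)=d+1$.
\end{itemize}
Since $\text{fin.dim}C_A=\text{fin.dim}A+1\ge1$, applying this to a $C_A$-module of maximal finite projective dimension gives the lower bound. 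If $\text{fin.dim}A=\infty$, the inequality $\pd_{C_A}Z\le\pd_{\G(A)}(Z,0,0)$ of Lemma \ref{sp.pd.compare} already suffices.

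The paper avoids the issue by working only with $Z=(X,0,0)$, $\pd_AX=d$. The last nonzero $C_A$-syzygy of this module is explicitly $(0,\Delta\otimes_AP'_d,0)$, so only modules of the form $(0,V,0)$ are ever tensored with $\Delta$.
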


\begin{proof}
Let $0\ne X\in A-\mod$ and $\pd_AX=d<\infty$. We claim that
\[
  \pd_{\G(A)}\bigl((X,0,0),0,0\bigr)=d+2.
\]
Using the notation in the proof of lemma\ref{GOP&fdm+1}, one has
\[
  \Omega_{C_A}^{d+1}(X,0,0)
  \cong(0,\Delta\otimes_AP'_d,0)\ne0.
\]
Applying lemma\ref{sp.syzygy} to the second extension gives
\[
  \Omega_{\G(A)}^{d+1}\bigl((X,0,0),0,0\bigr)
  \cong\bigl((0,\Delta\otimes_AP'_d,0),0,0\bigr)\oplus Q
\]
for some projective $\G(A)$-module $Q$. Since $(0,\Delta\otimes_AP'_d,0)$ is a projective $C_A$-module, one further syzygy is
\[
  \Omega_{\G(A)}\bigl((0,\Delta\otimes_AP'_d,0),0,0\bigr)\cong(0,\Delta\otimes_{C_A}(0,\Delta\otimes_AP'_d,0),0)\cong(0,\Delta\otimes_AP'_d,0).
\]
Here the last isomorphism is an isomorphism of left $\G(A)$-modules obtained from the explicit left $\Delta$-module isomorphism in the proof of proposition\ref{GOPization}. By $\Delta\otimes_AP'_d\neq 0$, we know $\Omega_{\G(A)}\bigl((0,\Delta\otimes_AP'_d,0),0,0\bigr)\in \G(A)-\proj$, so the claimed equality of projective dimensions follows.

The upper bound is proved exactly as in lemma\ref{GOP&fdm+1}, now using lemma\ref{sp.pd.compare} for the second extension. Hence
\[
  \text{fin.dim}\G(A)=\text{fin.dim} C_A+1=\text{fin.dim} A+2.
\]
\end{proof}

Finally, GOPization preserves the singularity category. Recall that
\[
  \Dsg(A)=\Db(A-\mod)/\Kb(A-\proj).
\]

\begin{lemma}\label{sstrianglering&singularitycat}
Let
\[
  R=\begin{pmatrix}A&0\\{}_BM_A&B\end{pmatrix}
\]
and assume that $B$ is semisimple. Then there is a triangle equivalence
\[
  \Dsg(R)\simeq\Dsg(A).
\]
\end{lemma}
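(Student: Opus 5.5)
We construct explicit functors in both directions between the stabilized module categories and then pass to singularity categories. For the comparison with $\Db(A-\mod)$ we use the idempotent $e=\begin{pmatrix}1&0\\0&0\end{pmatrix}\in R$. Then $eRe\cong A$ and $R/ReR\cong B$ is semisimple.

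\textbf{Functors.} Define the exact functor
\[
  Fr:A-\mod\longrightarrow R-\mod,\qquad U\longmapsto(U,0,0),
\]
which is restriction along the surjection $R\twoheadrightarrow A$. Define also the exact functor
\[
  e(-):R-\mod\longrightarrow A-\mod,\qquad (U,V,\varphi)\longmapsto U.
\]
Both extend to triangle functors on $\Db$, and we have $e\circ Fr=\Id$.

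\textbf{Step 1: both functors preserve $\Kb(\proj)$.} The functor $e(-)$ sends each projective $R$-module to a projective $A$-module, by the description of indecomposable projectives in the remark on triples. For $Fr$, the short exact sequence in the proof of lemma\ref{sp.pd.compare} gives $\pd_R(P,0,0)\le 1$ for every $P\in A-\proj$. Hence $Fr$ sends bounded complexes of projectives to perfect complexes. So both functors induce triangle functors
\[
  \overline{Fr}:\Dsg(A)\to\Dsg(R),\qquad \overline{e}:\Dsg(R)\to\Dsg(A),
\]
and $\overline e\,\overline{Fr}\cong\Id$ already at the level of $\Db$.

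\textbf{Step 2: $\overline{Fr}\,\overline e\cong\Id$.} Take the canonical sequence $0\to(0,V,0)\to L\to(U,0,0)\to0$ used in lemma\ref{sp.pd.compare}. Its first term is projective because $B$ is semisimple. Applied to the components of a bounded complex $X^\bullet$ of $R$-modules, it yields a short exact sequence of complexes
\[
  0\to K^\bullet\to X^\bullet\to Fr(eX^\bullet)\to0,
\]
where $K^\bullet$ is a bounded complex of projective $R$-modules. The resulting triangle in $\Db(R)$ has its first term in $\Kb(R-\proj)$. Therefore the natural map $X^\bullet\to Fr(eX^\bullet)$ becomes an isomorphism in $\Dsg(R)$. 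This step needs the componentwise sequences to assemble into morphisms of complexes. They do, because $(0,V,0)\to L$ is natural in $L$: it is multiplication by $1-e$, that is, the subfunctor $(1-e)L$.

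\textbf{Step 3: naturality and the main obstacle.} The isomorphism $X^\bullet\to Fr(eX^\bullet)$ is induced by the natural morphism of exact functors $\Id\Rightarrow Fr\circ e$, namely $L\to L/(1-e)L$. It is therefore natural and compatible with shifts and triangles, so it is a natural isomorphism of triangle functors after localization. The main obstacle is checking that $e(-)$ and $Fr$ pass to the Verdier quotients. The only real input is the projective-dimension estimate $\pd_R(P,0,0)\le1$, together with projectivity of $(0,V,0)$. Both come from semisimplicity of $B$ as in lemma\ref{sp.pd.compare}. Everything else is formal, so $\overline{Fr}$ and $\overline e$ are mutually quasi-inverse triangle equivalences.
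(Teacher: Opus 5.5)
Your proposal is correct and follows essentially the same route as the paper. Your $e(-)$ and $Fr$ are the paper's $j^*$ and $j_*$, and the key inputs coincide with those used there: $\pd_R(P,0,0)\le 1$, projectivity of every $(0,V,0)$, and the short exact sequence of complexes $0\to(1-e)X^\bullet\to X^\bullet\to Fr(eX^\bullet)\to 0$, whose perfect kernel vanishes in $\Dsg(R)$. Your identification of the kernel as the subfunctor $(1-e)L$ (a submodule because $eR(1-e)=0$) explicitly justifies the naturality that the paper only asserts.
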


\begin{proof}
Define exact functors
\begin{align*}
  j^*:R-\mod&\longrightarrow A-\mod,&
    j^*(X,Y,\varphi)&=X,\\
  j_*:A-\mod&\longrightarrow R-\mod,&
    j_*(X)&=(X,0,0),\\
  i_*:B-\mod&\longrightarrow R-\mod,&
    i_*(Y)&=(0,Y,0).
\end{align*}
They induce triangle functors on bounded derived categories, denoted by the same symbols.

If $P'$ is a projective $R$-module, then $j^*(P')$ is projective over $A$.
Thus
\[
  j^*\bigl(\Kb(R-\proj)\bigr)\subseteq\Kb(A-\proj),
\]
and $j^*$ induces a triangle functor
\[
  \overline{j^*}:\Dsg(R)\longrightarrow\Dsg(A).
\]

On the other hand, if $P$ is projective over $A$, then
\[
0\longrightarrow(0,M\otimes_AP,0)
 \longrightarrow(P,M\otimes_AP,1)
 \longrightarrow(P,0,0)\longrightarrow 0
\]
shows that $j_*(P)=(P,0,0)$ has a finite projective resolution over $R$; the first two terms are projective because $B$ is semisimple. Hence $j_*$ sends perfect complexes to perfect complexes and induces
\[
  \overline{j_*}:\Dsg(A)\longrightarrow\Dsg(R).
\]
Since $j^*j_*=1_{A-\mod}$, one has
\[
  \overline{j^*}\circ\overline{j_*}=1_{\Dsg(A)}.
\]

It remains to prove the other composite. Let $L^\bullet\in\mathrm C^{\mathrm b}(R-\mod)$, with
\[
  L^n=(X^n,Y^n,\varphi^n),
  \qquad
  d_L^n=(s^n,t^n).
\]
Then $(X^\bullet,s^\bullet)$ and $(Y^\bullet,t^\bullet)$ are bounded complexes and there is a short exact sequence of complexes
\[
0\longrightarrow i_*(Y^\bullet)
 \longrightarrow L^\bullet
 \longrightarrow j_*(X^\bullet)\longrightarrow 0.
\]
It yields a distinguished triangle in $\Db(R-\mod)$,
\[
  i_*(Y^\bullet)\longrightarrow L^\bullet
  \longrightarrow j_*(X^\bullet)\longrightarrow i_*(Y^\bullet)[1].
\]
Since $B$ is semisimple, every term of $i_*(Y^\bullet)$ is a projective $R$-module. Thus $i_*(Y^\bullet)$ is perfect and becomes zero in $\Dsg(R)$. Consequently,
\[
  L^\bullet\cong j_*(X^\bullet)
  =j_*j^*(L^\bullet)
  \qquad\text{in }\Dsg(R).
\]
This is natural in $L^\bullet$, and therefore
\[
  \overline{j_*}\circ\overline{j^*}
  \cong1_{\Dsg(R)}.
\]
The two induced functors are quasi-inverse triangle equivalences.
\end{proof}

Now we point out the following results that can be obtained immediately.

\begin{theorem}\label{GOP&singularitycat}
For every Artin algebra $A$,
\[
  \Dsg(A)\simeq\Dsg(\G(A)).
\]
\end{theorem}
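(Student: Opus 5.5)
The plan is to apply Lemma~\ref{sstrianglering&singularitycat} twice and compose the two triangle equivalences. The GOPization is an iterated triangular extension by a semisimple algebra, and that lemma handles exactly one such step.

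\textbf{First step.} Take $R=C_A$, with $B=\Delta=A/\rad A$ and ${}_BM_A={}_\Delta\Delta_A$. Here $\Delta$ is semisimple. The bimodule $\Delta$ is finitely generated over the commutative Artinian ring $k$, with $k$ acting centrally, because $\Delta$ is a quotient of the Artin $k$-algebra $A$. Hence $C_A$ is an Artin algebra of the required form, and Lemma~\ref{sstrianglering&singularitycat} gives a triangle equivalence $\Dsg(C_A)\simeq\Dsg(A)$, induced by $\overline{j^*}$ and $\overline{j_*}$.

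\textbf{Second step.} Take $R=\G(A)$, now with $C_A$ in place of $A$, $B=\Delta$, and $M={}_\Delta\Delta_{C_A}$, where the right action is through the surjection $\varepsilon$. Again $\Delta$ is semisimple and $M$ is finitely generated over $k$. Note that $\varepsilon$ is indeed an algebra homomorphism, because the lower-right corner of a product of lower triangular matrices is the product of the corners. A second application of Lemma~\ref{sstrianglering&singularitycat} gives $\Dsg(\G(A))\simeq\Dsg(C_A)$.

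\textbf{Conclusion.} Composing the two triangle equivalences yields $\Dsg(A)\simeq\Dsg(\G(A))$. Explicitly, the equivalence is induced by the exact functor $X\mapsto((X,0,0),0,0)$, with quasi-inverse induced by taking the first component of the first component.

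The only point needing any care is checking that the hypotheses of the lemma hold at the second stage. The lemma requires only that $B$ be semisimple; no semisimplicity of $M_A$ is needed there, unlike Lemma~\ref{sp.syzygy}. It also needs $C_A$ to be an Artin algebra, which is standard for triangular matrix algebras over Artin algebras with a bimodule finitely generated over $k$. Nothing here looks like a genuine obstacle.
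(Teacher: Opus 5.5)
Your proposal is correct and matches the paper's argument. The paper states this theorem as an immediate consequence of the preceding lemma on triangular matrix algebras with semisimple lower corner $B$, applied first to $C_A$ over $A$ and then to $\G(A)$ over $C_A$, and composes the two triangle equivalences; your explicit form of the composite, $X\mapsto((X,0,0),0,0)$, with quasi-inverse given by taking the first component twice, is also correct.
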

\section{GOP is not invariant under derived equivalence}

We use the following result of Sefi Ladkani; see \cite[Theorem 4.9, Remark 4.10]{Ladkani11}.

\begin{lemma}\label{derivedequ}
Let $A$ be an Artin $k$-algebra of finite global dimension, let $B$ be an Artin $k$-algebra, and let ${}_BM_A$ be a $B$--$A$-bimodule which is finitely generated as a right $A$-module. Assume that $k$ acts centrally on $M$. $D$ denotes the standard duality, then
\[
  \Lambda=
  \begin{pmatrix}A&0\\{}_BM_A&B\end{pmatrix}
  \qquad\text{and}\qquad
  \Gamma=
  \begin{pmatrix}B&0\\{}_AD(M)_B&A\end{pmatrix}
\]
are derived equivalent.
\end{lemma}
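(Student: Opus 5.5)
The plan is to realize the derived equivalence by a tilting module, following Ladkani's approach. Since the lemma is quoted from \cite[Theorem 4.9, Remark 4.10]{Ladkani11}, I would only outline why it holds and not reprove it in full.

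Write $e_1,e_2$ for the diagonal idempotents of $\Lambda$. Left $\Lambda$-modules are triples $(U,V,\varphi)$ with $U\in A-\mod$, $V\in B-\mod$ and $\varphi\colon M\otimes_AU\to V$. The candidate tilting complex is
\[
  T=(0,B,0)\oplus\bigl(DA,0,0\bigr)[?]
\]
or, more concretely, a complex built from the $B$-projectives together with the image of $A$ under a shifted Serre-type functor. Since $\operatorname{gl.dim}A<\infty$, the Nakayama functor $\nu_A=DA\otimes^{\mathbf L}_A-$ is an autoequivalence of $\Db(A-\mod)$. The role of this hypothesis is to make the required summand perfect and to make the computation of $\Hom$ between the two summands land in $D(M)$.

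The key steps are as follows.
\begin{enumerate}
\item Take $T_1=\Lambda e_2=(0,B,0)$, and take $T_2$ to be the cone of the natural map $\Lambda e_1\to(0,M,0)$, twisted so that its endomorphism ring is $A$. The twist is via $\nu_A$, equivalently via $\Hom_A(M,-)$ duality.
\item Verify that $\Hom(T_i,T_j[n])=0$ for $n\neq0$.
\item Compute that $\End(T_1)\cong B$, $\End(T_2)\cong A$ and $\Hom(T_1,T_2)\cong D(M)$ as bimodules, while $\Hom(T_2,T_1)=0$. This gives $\End(T)\cong\Gamma$.
\item Check that $T$ generates $\Kb(\Lambda-\proj)$, using $T_1$ together with the triangle that defines $T_2$.
\end{enumerate}
Rickard's theorem then yields $\Db(\Lambda)\simeq\Db(\Gamma)$.

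The main obstacle is the identification $\Hom(T_1,T_2)\cong D(M)=\Hom_k(M,k)$. This needs the derived identity
\[
  \mathbf R\Hom_A(M,A)\otimes^{\mathbf L}_A DA\cong D(M),
\]
valid because $A$ has finite global dimension and $M$ is finitely generated over $A$. Arranging the shifts so that all higher $\Hom$ groups vanish is exactly the content of \cite[Remark 4.10]{Ladkani11}. I would defer that bookkeeping to Ladkani's paper and cite it there.
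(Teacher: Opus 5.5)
\textbf{Where this stands relative to the paper.} The paper gives no proof of this lemma; it quotes it from Ladkani (Theorem 4.9 and Remark 4.10). Insofar as your proposal also ends by deferring to Ladkani, it agrees with the paper. The outline you give, however, is not a correct sketch, and step 1 fails as written.

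\textbf{Why $T_1=\Lambda e_2$ cannot work.} Since $\Hom_{\Db(\Lambda)}(\Lambda e_2,X[n])\cong H^n(e_2X)$, this summand just reads off the $B$-component, and no perfect $T_2$ pairs with it correctly.

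Suppose first that $\Hom^\bullet(T_2,\Lambda e_2)=0$, as you claim. For a bounded complex of projectives there is a triangle $\Lambda e_1\otimes_A e_1T_2\to T_2\to\Lambda e_2\otimes_B\overline{T_2}\to$, where $\overline{T_2}$ is a perfect $B$-complex. The vanishing gives $\mathbf R\Hom_B(\overline{T_2},B)=0$, hence $\overline{T_2}=0$. So $T_2\cong\Lambda e_1\otimes_AU$ with $U$ perfect and $\End^\bullet_A(U)\cong A$. Then $\Hom^\bullet(T_1,T_2)\cong M\otimes^{\mathbf L}_AU$, which is built from $M$, not from $D(M)$.

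For $A=k$ and $M\neq0$ this forces $U\cong k[n]$. If $n=0$, then $T\cong\Lambda$ and $\End(T)\cong\Lambda$, which is not $\Gamma$ in general: for $B=k[x]/(x^2)$ and $M=B$, the indecomposable projectives have dimensions $3,2$ over $\Lambda$ but $4,1$ over $\Gamma$. If $n\neq0$, $T$ is not tilting, since $\Hom(T_1,T_2[-n])\cong M\neq0$.

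Suppose instead that $\Hom^\bullet(\Lambda e_2,T_2)=0$. Then $e_2T_2$ is acyclic and $T_2\cong(U,0,0)$. This is perfect only when $M\otimes^{\mathbf L}_AU$ is perfect over $B$, which fails for $A=k$, $B=k[x]/(x^2)$, $M=k$.

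So twisting the $A$-summand by $\nu_A$ cannot repair this; it is the $B$-summand that must be glued. Relatedly, your ``natural map $\Lambda e_1\to(0,M,0)$'' is zero, because $\Hom_\Lambda(\Lambda e_1,(0,M,0))=e_1(0,M,0)=0$. The natural map is the inclusion $(0,M,0)\hookrightarrow\Lambda e_1$, and $(0,M,0)$ is not perfect unless $\pd_BM<\infty$.

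\textbf{A correct choice.} Take
\[
  T_A=\Lambda e_1\otimes_A\nu_A^{-1}(A),\qquad
  T_B=\operatorname{cone}\bigl(\Lambda e_2\xrightarrow{c}\Lambda e_1\otimes_A\nu_A^{-1}(DM)\bigr),
\]
where $\nu_A^{-1}=\mathbf R\Hom_A(DA,-)$. Its values are perfect because $\operatorname{gl.dim}A<\infty$. The morphism $c$ corresponds to $1_{DM}$ under
\[
\Hom(\Lambda e_2,\Lambda e_1\otimes_AX)\cong H^0(M\otimes^{\mathbf L}_AX)\cong\Hom_{\Db(A)}(DM,\nu_AX),
\]
the last isomorphism being Serre duality. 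This is the identity that actually brings in $D(M)$. Your displayed identity has the tensor factors in the wrong order: since $M$ is a right $A$-module, it should read $DA\otimes^{\mathbf L}_A\mathbf R\Hom_{A^{op}}(M,A)\cong DM$.

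One then checks that $\Hom^\bullet(T_B,T_A)=0$, $\Hom^\bullet(T_A,T_B)\cong DM$, $\End^\bullet(T_A)\cong A$ and $\End^\bullet(T_B)\cong B$. This is exactly the Hom pattern between $\Gamma e_2$ and $\Gamma e_1$. For $A=k$, the case used in the paper's example, this reduces to the classical reflection between a one-point extension and a one-point coextension:
\[
T=\Lambda e_1\oplus\bigl(\Lambda e_2\xrightarrow{c}\Lambda e_1\otimes_kDM\bigr),
\]
with $c$ the canonical element of $M\otimes_kDM$.
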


We now construct the promised counterexample to derived invariance.

\begin{example}
Let $K$ be a field and let $q\in K^\times$ be such that $\rho=-q^{-1}$ has infinite multiplicative order. Put
\[
  B_q=K\langle X,Y\rangle/(X^2,Y^2,XY-qYX)
\]
and
\[
  C=
  \begin{pmatrix}
    B_q&0\\{}_KK_{B_q}&K
  \end{pmatrix},
  \qquad
  Q=(0,K,0)\in C-\mod,
\]
where the right $B_q$-action on $K$ is induced by $B_q\twoheadrightarrow B_q/\rad B_q\cong K$. Define
\[
  \Lambda=
  \begin{pmatrix}
    K&0\\{}_CQ_K&C
  \end{pmatrix},
  \qquad
  \Gamma=
  \begin{pmatrix}
    C&0\\{}_KD(Q)_C&K
  \end{pmatrix}.
\]
Then $\Lambda$ and $\Gamma$ are derived equivalent, $\Gamma$ satisfies GOP, and $\Lambda$ does not satisfy GOP.
\end{example}

\begin{proof}
Apply lemma\ref{derivedequ} with $A=K$, $B=C$, and $M=Q$. This gives a derived equivalence between $\Lambda$ and $\Gamma$.

Notice that there is an isomorphism of $K$-$C$-bimodules
\[
  D(Q)\cong{}_KK_C.
\]
Hence
\[
  \Gamma\cong\G(B_q).
\]
By proposition\ref{GOPization}, $\Gamma$ satisfies GOP.

It remains to show that $\Lambda$ does not. For $\lambda\in K^\times$, take the module $M_\lambda$ in lemma\ref{Mr}. Applying lemma\ref{normsyzygy} and lemma\ref{sp.syzygy} to $C$ gives, for every $n\ge1$,
\[
  \Omega_C^n(M_\lambda,0,0)
  \cong(M_{\rho^n\lambda},0,0)\oplus(0,K,0).
\]
Now define the $\Lambda$-module
\[
  Z_\lambda=(0,(M_\lambda,0,0),0).
\]
By lemma\ref{normsyzygy}, we know
\[
  \Omega_\Lambda^n(Z_\lambda)\cong\bigl(0,\Omega_C^n(M_\lambda,0,0),0\bigr)\cong\bigl(0,(M_{\rho^n\lambda},0,0),0\bigr)\oplus\bigl(0,(0,K,0),0\bigr)\qquad(n\ge1).
\]
The second summand is projective over $\Lambda$, because $(0,K,0)$ is projective over $C$. For the first summand, the triple description gives
\[
  \End_\Lambda\bigl(0,(M_{\rho^n\lambda},0,0),0\bigr)
  \cong\End_{B_q}(M_{\rho^n\lambda}),
\]
which is local by lemma\ref{Mr}; hence the module is indecomposable. It is non-projective, since a projective $\Lambda$-module with zero first component is of the form $(0,P,0)$ with $P$ projective over $C$, whereas $(M_{\rho^n\lambda},0,0)$ is not projective over $C$.

Because $\rho$ has infinite order, the modules
\[
  M_{\rho^n\lambda}\qquad(n\ge1)
\]
are pairwise nonisomorphic. Thus $Z_\lambda$ has infinite projective dimension, and each of its positive syzygies has exactly one non-projective indecomposable direct summand, with no isomorphism class occurring twice. Therefore $\Lambda$ does not satisfy GOP.
\end{proof}

\medskip
{\bf Acknowledgement.}

 {\footnotesize

 \smallskip
 Weiheng Xia

School of Mathematical Sciences, Capital Normal University,  100048  Beijing, P. R. China;

 {\tt Email: xia.weiheng@qq.com}

}
\end{document}